\documentclass[12pt]{amsart}

\usepackage{amsmath,amssymb}
\usepackage{bm}
\usepackage{graphicx}
\usepackage{ascmac}
\usepackage{amsthm}
\usepackage{amsfonts}
\usepackage{mathrsfs}
\usepackage{mathtools}
\usepackage[all]{xy}
\usepackage{comment}
\usepackage{appendix}

\theoremstyle{plain}
\newtheorem{thm}{Theorem}[section]
\newtheorem{cor}[thm]{Corollary}
\newtheorem{prop}[thm]{Proposition}
\newtheorem{lem}[thm]{Lemma}

\theoremstyle{definition}
\newtheorem{dfn}[thm]{Definition}
\newtheorem{ex}[thm]{Example}
\newtheorem{rem}[thm]{Remark}

\newtheorem*{nota}{Notation}

\newtheorem*{claima}{Claim (a)}
\newtheorem*{claimb}{Claim (b)}

\newtheorem*{oftp}{Organization of this paper}
\newtheorem*{ack}{Acknowledgments}
\renewcommand{\qed}{\hfill$\blacksquare$\par}

\numberwithin{thm}{section}
\numberwithin{equation}{section}

\newcommand{\Zpn}{\mathbb{Z}_{>0}}
\newcommand{\Znn}{\mathbb{Z}_{\geq 0}}
\newcommand{\Z}{\mathbb{Z}}

\newcommand{\Q}{\mathbb{Q}}

\newcommand{\G}{\mathbb{G}}

\newcommand{\sS}{\mathscr{S}}

\renewcommand{\hbar}{\overline{h}}

\renewcommand{\P}{\mathbb{P}}

\newcommand{\bK}{\mathbf{K}}

\newcommand{\bd}{\mathbf{d}}

\newcommand{\cH}{\mathcal{H}}

\DeclareMathOperator{\Ker}{Ker}
\DeclareMathOperator{\Ima}{Im}

\DeclareMathOperator{\Hom}{Hom}

\DeclareMathOperator{\id}{id}
\DeclareMathOperator{\Aut}{Aut}

\DeclareMathOperator{\ord}{ord}
\DeclareMathOperator{\Gal}{Gal}

\DeclareMathOperator{\N}{N}

\DeclareMathOperator{\red}{red}

\DeclareMathOperator{\Res}{Res}

\DeclareMathOperator{\rk}{rank}

\DeclareMathOperator{\Pic}{Pic}

\DeclareMathOperator{\sep}{sep}

\DeclareMathOperator{\tor}{tor}

\DeclareMathOperator{\fl}{fl}

\DeclareMathOperator{\nor}{nor}
\DeclareMathOperator{\set}{set}
\DeclareMathOperator{\srd}{srd}

\usepackage[OT2,T1]{fontenc}
\DeclareSymbolFont{cyrletters}{OT2}{wncyr}{m}{n}
\DeclareMathSymbol{\Sha}{\mathalpha}{cyrletters}{"58}

\usepackage{geometry}
\usepackage{xcolor}

\title[Rationality problem, II]{The rationality problem for multinorm one tori, II}

\author[S.~Hasegawa]{Sumito Hasegawa}
\address{(Hasegawa)}
\email{sumito.hasegawa@gmail.com}

\author[K.~Kanai]{Kazuki Kanai}
\address{(Kanai) Department of Computer and Science, Ibaraki University, 4-12-1, Nakanarusawa-cho, Hitachi, Ibaraki, 316-8511, Japan}
\email{kazuki.kanai.du62@vc.ibaraki.ac.jp}

\author[Y.~Oki]{Yasuhiro Oki}
\address{(Oki) Department of Mathematics, College of Science, Rikkyo University, 3-34-1, Nishi-ikebukuro, Toshima-ku, Tokyo, 171-8501, Japan}
\email{oki@rikkyo.ac.jp}

\subjclass[2020]{11E72, 12F20, 14E08, 20C10, 20G15}

\begin{document}

\begin{abstract}
We investigate the stable and retract rationality of multinorm one tori associated to finite {\'e}tale algebras.
Our results are organized according to the greatest common divisor $d$ of the degrees of the factors. 
We show that these tori are stably rational for $d=1$, and obtain a criterion for retract rationality that can be attributed to our previous results.
For $d>1$, we provide sufficient conditions for the failure of retract rationality. 
We further generalize results of Endo--Miyata (1975) and Endo (2011) by giving an equivalent condition for multinorm one tori to be stably rational under the assumption that they split over Galois extensions with Galois groups in which all Sylow subgroups are cyclic. 
A similar result also holds when they split over dihedral Galois extensions. 
\end{abstract}

\maketitle
\tableofcontents

\section{Introduction}

Let $k$ be a field, and fix a separable closure $k^{\sep}$ of $k$. Consider an algebraic variety $X$ over $k$.
We say that $X$ is \emph{rational} over $k$ if it is birationally equivalent to a projective space over $k$. 
It is also important to investigate weaker conditions than rationality, such as \emph{stable rationality}, \emph{retract rationality}, and \emph{unirationality}.
These satisfy the following:
\begin{center}
rational $\Rightarrow$ stably rational $\Rightarrow$ 
retract rational $\Rightarrow$ unirational. 
\end{center}
We remark that the direction of the implication cannot be reversed in general.
See Section \ref{sect:rvrt} for more details.

This paper follows \cite{Hasegawa}, which gives a foundation for the study of stable rationality and the retract rationality of multinorm one tori. Let $\bK$ be a finite {\'e}tale algebra over $k$, that is, a finite product of finite separable field extensions of $k$ contained in $k^{\sep}$. Then, we set
\begin{equation*}
    T_{\bK/k}:=\Ker(\N_{\bK/k}\colon \Res_{\bK/k}\G_{m}\rightarrow \G_{m}),
\end{equation*}
where $\Res_{\bK/k}$ is the Weil restriction. We call it the \emph{multinorm one torus} associated to $\bK/k$. It is an algebraic torus of dimension $[\bK:k]-1$ over $k$. Moreover, it splits over the Galois closure $\widetilde{K}$ over $k$ of the composite field of all the factors of $\bK$, that is, there is an isomorphism of $L$-algebraic groups $T_{\bK/k}\otimes_{k}L \cong \G_{m,L}^{[\bK:k]-1}$. If $\bK$ is a field, $T_{\bK/k}$ is called a \emph{norm one torus}, and the stable (resp.~retract) rationality has been studied extensively in the literature. See the introduction in \cite{Hasegawa} for details. 

For a finite {\'e}tale algebra $\bK=\prod_{i=1}^{r}K_{i}$ over $k$, we define a positive integer $d_{k}(\bK)$ as follows: 
\begin{equation*}
d_{k}(\bK):=\gcd([K_{1}:k],\ldots,[K_{r}:k]). 
\end{equation*}
In this paper, we study the relation between the stable (resp.~retract) rationality of $T_{\bK/k}$ and $d_{k}(\bK)$. 
If $d_{k}(\bK)=1$, then we can completely resolve the problem as follows.  

\begin{thm}\label{mtd1}
Let $k$ be a field, and $\bK=\prod_{i=1}^{r}K_{i}$ be a finite {\'e}tale algebra over $k$, where $r\geq 2$, which satisfies $d_{k}(\bK)=1$. Then the multinorm one torus $T_{\bK/k}$ is stably rational over $k$. 
\end{thm}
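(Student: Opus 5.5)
The plan is to work with character modules and use the standard dictionary, due to Voskresenski\u{\i} and Endo--Miyata: an algebraic torus is stably rational over $k$ whenever its character module is stably permutation. Let $L=\widetilde{K}$ and $G=\Gal(L/k)$. For each $i$ let $H_{i}\le G$ be the subgroup fixing $K_{i}$, so that $[G:H_{i}]=[K_{i}:k]$. The character module of $\Res_{\bK/k}\G_{m}$ is the permutation module $P:=\bigoplus_{i=1}^{r}\Z[G/H_{i}]$. The norm map $\N_{\bK/k}$ corresponds to the $G$-map $\Z\to P$ sending $1$ to $N:=\sum_{i}\sum_{gH_{i}\in G/H_{i}}gH_{i}$, the sum of all cosets in all summands. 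Since $\N_{\bK/k}$ is surjective and $T_{\bK/k}$ is its kernel, we get an exact sequence of $G$-lattices
\begin{equation*}
0\rightarrow \Z\xrightarrow{\ 1\mapsto N\ } P\rightarrow X^{*}(T_{\bK/k})\rightarrow 0 .
\end{equation*}
This identifies $X^{*}(T_{\bK/k})$ with $P/\Z N$.

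The key step is to show that this sequence splits $G$-equivariantly when $d_{k}(\bK)=1$. Since $\gcd([G:H_{1}],\ldots,[G:H_{r}])=1$, Bézout gives integers $a_{1},\ldots,a_{r}$ with $\sum_{i}a_{i}[G:H_{i}]=1$. Define $\varphi\colon P\to\Z$ by sending every coset basis element of the $i$-th summand $\Z[G/H_{i}]$ to $a_{i}$. Because $G$ permutes the basis of each summand among itself and acts trivially on $\Z$, the map $\varphi$ is $G$-equivariant. Moreover
\begin{equation*}
\varphi(N)=\sum_{i}a_{i}[G:H_{i}]=1,
\end{equation*}
so $\varphi$ is a $G$-equivariant retraction of $\Z\to P$. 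Hence $P\cong \Z N\oplus\Ker\varphi$ as $G$-lattices, and $X^{*}(T_{\bK/k})\cong\Ker\varphi$.

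Consequently $X^{*}(T_{\bK/k})\oplus\Z\cong P$ is a permutation lattice, so $X^{*}(T_{\bK/k})$ is stably permutation. By the criterion recalled in Section \ref{sect:rvrt} (a torus whose character module is stably permutation is stably rational), $T_{\bK/k}$ is stably rational over $k$.

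I do not expect a serious obstacle. The only point needing care is bookkeeping: checking that the norm corresponds exactly to the element $N$, counting each summand's cosets once, and that $\varphi$ is well defined and equivariant. Both are immediate from the permutation structure. The hypothesis $r\geq 2$ is not used in the splitting argument. It only excludes the degenerate case where $d_{k}(\bK)=1$ forces $\bK=k$.
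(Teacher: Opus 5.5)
Your proof is correct and is essentially the paper's argument in dual form. The paper splits the augmentation sequence $0\to I_{G/\cH}\to\bigoplus_i\Z[G/H_i]\to\Z\to 0$ using the B\'ezout section $1\mapsto (a_i\sum_{g\in G/H_i}g)_i$ and then dualizes, while your retraction $\varphi$ is exactly the transpose of that section, applied directly to the character sequence; both give $X^{*}(T_{\bK/k})\oplus\Z\cong\bigoplus_i\Z[G/H_i]$.
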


In the case $d_{k}(\bK)>1$, the situation becomes more complicated compared to the above. However, we can give some sufficient conditions for $T_{\bK/k}$ to be not retract rational. 

\begin{thm}\label{mtd2}
Let $k$ be a field, and $\bK=\prod_{i=1}^{r}K_{i}$ a finite {\'e}tale algebra over $k$, where $r\geq 2$, which satisfies $d_{k}(\bK)=2$. For each $i$, we denote by $L_{i}$ the Galois closure of $K_{i}$ over $k$. 
\begin{enumerate}
\item If $r=2$, we further assume that 
    \begin{itemize}
    \item $[K_{1}:k]\in 4\Z$ or $[K_{2}:k]\in 4\Z$; and 
    \item $L_{1}\cap K_{2}=k$ or $L_{2}\cap K_{1}=k$.
    \end{itemize}
Then the multinorm one torus $T_{\bK/k}$ is not retract rational over $k$. 
\item If $r\geq 3$, we further assume that there is a subset $I$ of $\{1,\ldots,r\}$ with $\#I=3$ such that $L_{i}\cap K_{j}=k$ or $L_{j}\cap K_{i}=k$ for any $i,j\in I$ with $i\neq j$. Then the multinorm one torus $T_{\bK/k}$ is not retract rational over $k$. 
\end{enumerate}
\end{thm}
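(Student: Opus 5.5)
Throughout, write $G=\Gal(\widetilde{K}/k)$, $H_{i}=\Gal(\widetilde{K}/K_{i})$, and $J=J_{G/\mathbf{H}}$ for the character module of $T_{\bK/k}$. The approach is the standard Voskresenskii--Endo--Miyata--Saltman reduction. $T_{\bK/k}$ is retract rational if and only if its flasque class $[J]^{\fl}$ is invertible, and invertible lattices restrict to invertible lattices. So it suffices to find one subgroup $D\le G$ over which $[\operatorname{Res}_{D}J]^{\fl}$ is not invertible. The usual obstruction is $\widehat{H}^{-1}$ or $H^{1}$ of the flasque resolution, which equals $\Sha^{2}_{\omega}$-type groups. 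We choose $D$ to be a small $2$-group: $C_{2}\times C_{2}$ in case (1), and $C_{2}\times C_{2}$ or a suitable elementary abelian group in case (2). Over $D$ the restriction of $J$ decomposes through the double coset decomposition $H_{i}\backslash G/D$ into multinorm one lattices for $D$-sets $\bigsqcup D/(D\cap gH_{i}g^{-1})$. For bicyclic $D$, the previous paper \cite{Hasegawa} (or the known computation for $C_{2}\times C_{2}$) then gives an explicit nonvanishing of $H^{1}(D,[J]^{\fl})\cong \widehat{H}^{-1}$ in terms of the orbit types.

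For (1), we reduce via the linear disjointness hypothesis, say $L_{1}\cap K_{2}=k$. Then $G$ acts on $G/H_{1}\times G/H_{2}$ with enough independence that we can choose $D=\langle\sigma,\tau\rangle\cong C_{2}\times C_{2}$ with two properties. First, $\sigma,\tau$ act on $G/H_{2}$ so that no orbit is free of length $1$ in a way that trivializes things; the point is that $4\mid[K_{2}:k]$ or $4\mid[K_{1}:k]$ lets us find a regular $D$-orbit. Second, on the other factor all $D$-orbits have even length, since $d=2$. Concretely, the plan is to pick $\tau\in\Gal(\widetilde{K}/L_{1})$ and $\sigma$ in a Sylow $2$-subgroup, using $L_{1}\cap K_{2}=k$ to ensure $\Gal(\widetilde{K}/L_{1})$ acts transitively on $G/H_{2}$. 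The computation should then show that the Tate cohomology obstruction $\Ker(\widehat{H}^{0}$-type map$)$, namely $\Sha^2_\omega(D,J)$, contains a copy of $\Z/2$. For (2), we use the three indices in $I$ and apply the same independence pairwise. This should produce a $D$ of order $4$ whose orbits on each of the three $G/H_{i}$ are all of even length and contain nontrivial stabilizers in three distinct index-$2$ subgroups. That is exactly the configuration where the multinorm lattice for $C_{2}\times C_{2}$ over $\bigsqcup_{j} D/D_{j}$ (the three subgroups of order $2$) is known not to be retract rational. Finally, we pass back to $\bK$ by noting that extra factors only add permutation-like summands modulo the relation, which does not kill the obstruction because all lengths are even.

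The main obstacle is the group-theoretic step: producing $D$ with prescribed orbit structure from the hypotheses $L_{i}\cap K_{j}=k$, and the divisibility by $4$ in case (1). I would first translate $L_{i}\cap K_{j}=k$ into $G=N_{i}H_{j}$, where $N_{i}=\Gal(\widetilde{K}/L_{i})$ is the normal core of $H_{i}$. This gives independent control of the action on the two coset spaces, and we then build $D$ inside a Sylow $2$-subgroup of $N_{i}$ and of a complement. I expect the bookkeeping of double cosets, and checking that the $\Z/2$ class survives in the full $J$ rather than only in a summand, to be the delicate part.
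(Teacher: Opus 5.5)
\textbf{There is a genuine gap: neither of your two concrete steps works.} The frame itself is fine: retract rationality holds iff $[J]^{\fl}$ is invertible, and one bad subgroup suffices.

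\emph{The Klein four subgroup does not detect the obstruction in case (1).} Take $G=C_{4}\times C_{2}=\langle\sigma\rangle\times\langle\tau\rangle$ with $H_{1}=\langle\sigma\rangle$ and $H_{2}=\langle\tau\rangle$, i.e.\ $K_{1}$ quadratic and $K_{2}$ cyclic quartic with $K_{1}\cap K_{2}=k$. All hypotheses of (1) hold.
\begin{itemize}
\item The only Klein four subgroup is $D=\langle\sigma^{2},\tau\rangle$. Proposition~\ref{prop:rtmc} and Lemma~\ref{lem:rded} give $J|_{D}\cong J_{D/\{\langle\sigma^{2}\rangle,\langle\tau\rangle\}}\oplus\Z[D/\langle\tau\rangle]$. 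This is quasi-permutation (Endo's biquadratic case, Proposition~\ref{prop:tgqp}(ii)), and there is no regular $D$-orbit at all.
\item Cyclic subgroups are harmless by Proposition~\ref{prop:emqi}. Hence $[J]^{\fl}$ is invertible on every proper subgroup.
\item For $G$ itself, $H^{1}(G,[J]^{\fl})=0$ by the Demarche--Wei vanishing recalled in the remark after Theorem~\ref{mtd3}, since $L_{1}\cap K_{2}=k$.
\end{itemize}
Yet $[J]^{\fl}$ is not invertible, by Proposition~\ref{prop:tgqp}, since $C_{4}\times C_{2}$ is neither cyclic nor dihedral. So $[J]^{\fl}$ is flabby and coflabby but not invertible, and no computation of $H^{1}$ or $\Sha^{2}_{\omega}$ on any subgroup can prove (1).

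\emph{Case (2) fails the same way.} Take $G=C_{4}\times C_{4}=\langle a\rangle\times\langle b\rangle$ and $\cH=\{\langle a,b^{2}\rangle,\langle b\rangle,\langle ab\rangle\}$. This set is special, has $d=2$, and $\bigcap_{i}H_{i}=1$. The only non-cyclic elementary abelian subgroup is $\langle a^{2},b^{2}\rangle$, and it lies in $H_{1}$. So $J$ restricted to it is even stably permutation by Theorem~\ref{thm:gcdo}.

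\textbf{What is missing.} The obstruction lives on the whole $2$-Sylow subgroup $P$, and it is detected by a classification rather than by a cohomology group.
\begin{itemize}
\item The paper first reduces to $P$ via Theorem~\ref{thm:qips}. For primes $p\nmid d$ the restriction to $G_{p}$ is quasi-permutation by Theorem~\ref{thm:gcdo}, and Endo's criterion then glues Sylow-wise quasi-invertibility.
\item It then invokes Proposition~\ref{prop:tgqp}. Over a $2$-group, $J$ is quasi-invertible only in two situations: a single subgroup with cyclic quotient, or the pair of non-conjugate reflection subgroups in a dihedral quotient $D_{2^{\nu}}$, where both members have index $2^{\nu}$.
\item Corollary~\ref{cor:hppo} gives $\mu(\cH_{P}^{\red})=2$, so the dihedral alternative forces $\nu=1$. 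It therefore suffices to show $M(\cH_{P}^{\red})\geq 4$ in case (1) and $\#\cH_{P}^{\red}\geq 3$ in case (2).
\item In case (1), this is where $4\mid[K_{i}:k]$ enters, together with Lemma~\ref{lem:gnpi} applied to $N^{G}(H)H'=G$. They show that some member of $\cH_{P}$ of index $\geq 4$ is not contained in any index-$2$ member.
\item In case (2), the bound follows from Proposition~\ref{prop:syad} and Lemma~\ref{lem:psrd}.
\end{itemize}
Your translation of $L_{i}\cap K_{j}=k$ into $N^{G}(H_{i})H_{j}=G$ is exactly the right first step. It has to feed this Sylow-level combinatorics, not the construction of a small bicyclic $D$.
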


\begin{thm}\label{mtd3}
Let $k$ be a field, and $\bK=\prod_{i=1}^{r}K_{i}$ a finite {\'e}tale algebra over $k$, where $r\geq 2$, that satisfies $d_{k}(\bK)\geq 3$. We further assume $L_{i}\cap K_{j}=k$ for some $i\neq j$, where $L_{i}$ is the Galois closure of $K_{i}$ over $k$. Then the multinorm one torus $T_{\bK/k}$ is not retract rational over $k$. 
\end{thm}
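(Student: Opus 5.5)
Since $d_{k}(\bK)\geq 3$, the strategy is to reduce to a prime $p$ and a base field over which the relevant factors behave like cyclic extensions. After that, we detect non-retract-rationality through the character module: we exhibit a subgroup $H$ of $G=\Gal(\widetilde{K}/k)$ with $\widehat{H}^{-1}(H,J)\neq 0$ or with $\Sha^2_\omega$ nonvanishing. Here $J=[J_{G/H}]^{fl}$-type data are attached to $T_{\bK/k}$, and we use the standard criterion of Saltman/Colliot-Th\'el\`ene--Sansuc that $T$ is retract rational iff $\widehat{T}^{fl}$ is invertible (a direct summand of a permutation module). In particular, any nonzero $H^1(H,[\widehat{T}]^{fl})$ for some $H\leq G$ obstructs retract rationality. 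The key observation is the following. If some prime $p$ divides $d_{k}(\bK)$, then $p$ divides every $[K_{i}:k]$. With $d\geq 3$ we may choose either $p$ odd, or $p=2$ with $4\mid d$ (this occurs when $d$ is a power of $2$, since $d\geq 3$ forces $4\mid d$).

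\emph{Step 1 (reduction to two factors).} Let $i\neq j$ with $L_{i}\cap K_{j}=k$. We reduce to the torus $T_{K_{i}\times K_{j}/k}$. I expect the earlier results of the paper (inherited from \cite{Hasegawa}) to show that retract rationality of $T_{\bK/k}$ implies retract rationality of an auxiliary torus built from a sub-étale algebra. If no such statement is available, we instead work directly with the $G$-lattice $\widehat{T}_{\bK/k}$, which is the cokernel of $\Z\to\bigoplus_{l}\Z[G/H_{l}]$ with $H_{l}=\Gal(\widetilde{K}/K_{l})$, and restrict to a suitable subgroup.

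\emph{Step 2 (choice of subgroup).} Set $H_{i}=\Gal(\widetilde{K}/K_{i})$ and $N_{i}=\Gal(\widetilde{K}/L_{i})$, which is normal in $G$ and contained in $H_{i}$. The condition $L_{i}\cap K_{j}=k$ means that $N_{i}H_{j}=G$, so $H_{j}$ acts transitively on $G/H_{i}$ and $N_{i}$ acts transitively on $G/H_{j}$. Take a Sylow $p$-subgroup of a suitable subgroup, and cut down to a subgroup $P\cong C_{p}\times C_{p}$, or $P=C_{4}\times C_{2}$-type when $p=2$. We want $P$ to act on each of $G/H_{i}$ and $G/H_{j}$ with all orbits of length divisible by $p$, and with the two actions as "independent" as possible, using transitivity coming from $N_{i}H_{j}=G$. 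Concretely, pick $\sigma\in N_{i}$ of $p$-power order moving $H_{j}$ with orbit length divisible by $p$, and $\tau\in H_{j}$ with the analogous property on $G/H_{i}$. These exist because $p$ divides both degrees and the groups act transitively.

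\emph{Step 3 (cohomology computation).} Restricted to $P$, the lattice $\widehat{T}$ has the form $\operatorname{coker}(\Z\to \Z[P/A]^{a}\oplus\Z[P/B]^{b}\oplus\cdots)$, where the relevant stabilizers $A,B$ are "transversal". Using the known computation for multinorm one tori of two abelian extensions (the $d\geq 3$ analogue of the $r=2$ case in Theorem~\ref{mtd2}), we show that $H^{1}(P,[\widehat{T}]^{fl})$ or $\widehat{H}^{-1}$ is nonzero. The point of requiring $p$ odd or $4\mid d$ is precisely to rule out the degenerate case $C_{2}\times C_{2}$ with index-$2$ stabilizers, where the torus can be rational.

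The main obstacle is Step 2/3: producing a $p$-subgroup whose orbit structure on both coset spaces is controlled using only $L_{i}\cap K_{j}=k$, while the other factors $K_{l}$ are arbitrary and may contribute extra permutation summands. I would first try to show that these extra summands, all with orbit lengths divisible by $p$, do not kill the obstruction, by computing it as a quotient of $\bigoplus \widehat{H}^{0}$ terms, in the manner of the proof of Theorem~\ref{mtd2}.
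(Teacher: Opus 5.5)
Your choice of prime ($p$ odd, or $p=2$ with $4\mid d$) matches the paper, and so does your translation of $L_{i}\cap K_{j}=k$ into $N_{i}H_{j}=G$. The gap is in the tool you plan to use in Steps 2--3, which cannot work.

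\textbf{Why the cohomological test fails.} You want to show that the flabby class $F=[\widehat{T}]^{\fl}$ is not invertible by finding $H\leq G$ with $H^{1}(H,F)\neq 0$, equivalently $\Sha^{2}_{\omega}(H,\widehat{T})\neq 0$. Note that $\widehat{H}^{-1}(H,F)=0$ always, since $F$ is flabby. Nonvanishing of $H^{1}(H,F)$ is a valid obstruction, but under the hypotheses of Theorem \ref{mtd3} it can vanish identically, already in the simplest instance.

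Take $K_{1},K_{2}$ linearly disjoint cyclic extensions of odd prime degree $p$. Then $G=C_{p}\times C_{p}$, $d=p$ and $L_{1}\cap K_{2}=k$.
\begin{itemize}
\item For cyclic $H\leq G$, periodicity gives $H^{1}(H,F)\cong \widehat{H}^{-1}(H,F)=0$.
\item For $H=G$, $H^{1}(G,F)\cong H^{1}(k,\Pic\overline{X})=0$ by Demarche--Wei. For $r=2$ their hypothesis is exactly $L_{1}\cap K_{2}=k$; this is the point of the remark following Theorem \ref{mtd3}.
\end{itemize}
So $F$ is both flabby and coflabby, yet $J_{G/\cH}$ is not quasi-invertible by Proposition \ref{prop:odqp}. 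The nonzero class you hope to extract in Step 3 from ``the known computation for two abelian extensions'' therefore does not exist. Cutting down to smaller subgroups only makes matters worse, since over cyclic groups every lattice is quasi-invertible (Proposition \ref{prop:emqi}). Non-invertibility of a flabby and coflabby lattice over a non-cyclic $p$-group needs a structural argument, not a cohomology computation of this kind.

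\textbf{What the paper does instead.} It never computes cohomology.
\begin{itemize}
\item Theorem \ref{thm:qips} reduces quasi-invertibility of $J_{G/\cH}$ to Sylow subgroups $P$ for primes $p\mid d$. Primes not dividing $d$ are harmless by Theorem \ref{thm:gcdo}, and then Endo's Sylow criterion applies.
\item Lemma \ref{lem:gnpi}, Proposition \ref{prop:syad} and Lemma \ref{lem:psrd} show that $N^{G}(H_{i})H_{j}=G$ gives $P=(P\cap N^{G}(H_{i}))(P\cap gH_{j}g^{-1})$ for suitable $g$. Hence $\cH_{P}^{\red}$ contains a special subset with at least two elements.
\item This is how the remaining factors $K_{l}$ are controlled, and it replaces your Step 1. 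The implication you hope for there is false in general: adjoining a cyclic factor $K_{3}\subset K_{i}\cap K_{j}$ can make $T_{\bK/k}$ stably rational while $T_{K_{i}\times K_{j}/k}$ is not retract rational.
\item Finally, the classification of quasi-invertible $J_{P/\cH}$ over $p$-groups from the earlier paper concludes. For odd $p$, Proposition \ref{prop:odqp} excludes every reduced set with two or more elements. For $p=2$, both quasi-invertible families in Proposition \ref{prop:tgqp} are excluded by the special pair together with $\mu(\cH_{P}^{\red})\geq 4$, which follows from $4\mid d$ via Corollary \ref{cor:hppo}.
\end{itemize}
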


\begin{rem}
Let $k$ be a field, and $\bK$ a finite {\'e}tale algebra over $k$. Choose a compactification $X$ of $T_{\bK/k}$ over $k$, and put $\overline{X}:=X\otimes_{k}k^{\sep}$. Then, we have 
\begin{equation}\label{eq:hpic}
H^{1}(k,\Pic(\overline{X}))=0
\end{equation}
if $T_{\bK/k}$ is retract rational over $k$. On the other hand, \eqref{eq:hpic} may hold even if $T_{\bK/k}$ is not retract rational. In \cite[Theorem 1]{Demarche2014}, Demarche and Wei essentially gave the validity of \eqref{eq:hpic} in the case $L_{I}\cap K_{J}=k$. Here $I$ and $J$ are subsets of $\{1,\ldots,r\}$ with $I\sqcup J=\{1,\ldots,r\}$, $L_{I}$ is the composite field of $L_{i}$ for all $i\in I$ and $K_{J}$ is the composite field of $K_{j}$ for all $j\in J$. Pick $i_{0}\in I$ and $j_{0}\in J$. Then it is clear that $L_{i_{0}}\cap K_{j_{0}}=k$ is true. In particular, if $d_{k}(\bK)\geq 3$, then the assumption in Theorem \ref{mtd3} is much weaker than the one in \cite[Theorem 1]{Demarche2014}. 
\end{rem}

We also discuss some cases where the assumption in Theorem \ref{mtd2} or Theorem \ref{mtd3} fails. We first consider the case that $T_{\bK/k}$ splits over a finite Galois extension of which all Sylow subgroups of the Galois group are cyclic. In this case, \cite[Theorem 1.5]{Endo1975} implies that $T_{\bK/k}$ is always retract rational over $k$. 

\begin{thm}\label{mth1}
Let $k$ be a field, and $\bK=\prod_{i=1}^{r}K_{i}$ a finite {\'e}tale algebra over $k$ satisfying $K_{i}\not\subset K_{j}$ for any $i \neq j$. Denote by $L/k$ the Galois closure of the composite of $K_{1},\ldots,K_{r}$ over $k$, and put $G:=\Gal(L/k)$. We further assume that all Sylow subgroups of $G$ are cyclic. Then the $k$-torus $T_{\bK/k}$ is retract rational. Moreover, it is stably rational over $k$ if and only if 
\begin{equation*}
\mathscr{D}_{k}(\bK)\cap \mathscr{P}_{*}(G)=\emptyset. 
\end{equation*}
Here $\mathscr{D}_{k}(\bK)$ is the set of prime divisors of $d_{k}(\bK)$, and $\mathscr{P}_{*}(G)$ is the set of prime divisors $p$ of $\#G$ satisfying two conditions as follows: 
\begin{itemize}
\item a $p$-Sylow subgroup $G_{p}$ of $G$ is normal; and
\item the image of the homomorphism $G\rightarrow \Aut(G_{p})$ induced by conjugation has order $\geq 3$. 
\end{itemize}
\end{thm}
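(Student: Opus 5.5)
Retract rationality is immediate from \cite[Theorem 1.5]{Endo1975}: when all Sylow subgroups of $G$ are cyclic, every $G$-lattice is quasi-invertible (its flasque class is invertible), and the character lattice of $T_{\bK/k}$ is a $G$-lattice. The plan therefore concentrates on the criterion for stable rationality. By Voskresenski\u{\i}, this amounts to deciding when the flasque class $[J]^{\fl}$ of the character lattice $J=\widehat{T}_{\bK/k}$ vanishes in the flasque class monoid. Here $J$ is the cokernel of $\Z\to\bigoplus_{i}\Z[G/H_{i}]$, where $H_{i}=\Gal(L/K_{i})$ and the map sends $1$ to the sum of the norm elements.

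First I would reduce to Sylow subgroups. For a group with all Sylow subgroups cyclic (a metacyclic $Z$-group), Endo--Miyata show that a lattice $M$ has $[M]^{\fl}=0$ if and only if $[M|_{G_{p}}]^{\fl}=0$ over $N_{G}(G_{p})$ for every $p$. In fact it is enough to check vanishing over the subgroups $G_{p}\rtimes C$ appearing in the structure theorem. So the criterion splits prime by prime. For each prime $p$ I would compute the restriction of $J$ to such a subgroup $N_{p}\supseteq G_{p}$ using Mackey's formula. The restriction of $\bigoplus_{i}\Z[G/H_{i}]$ is a sum of permutation lattices $\Z[N_{p}/(N_{p}\cap gH_{i}g^{-1})]$, and $J|_{N_{p}}$ is the cokernel of the diagonal map from $\Z$.

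Next, the prime-by-prime analysis. Suppose $p\nmid d_{k}(\bK)$. Then some $[K_{i}:k]$ is prime to $p$, so some orbit of $G_{p}$ on $G/H_{i}$ has size prime to $p$, hence is a fixed point, and $p$-locally the map $\Z\to\bigoplus\Z[G/H_{i}]$ splits. In the argument of Theorem \ref{mtd1}, this makes $J$ stably permutation locally at $p$, so the $p$-part contributes nothing. Now suppose $p\mid d_{k}(\bK)$ but $p\notin\mathscr{P}_{*}(G)$. Either $G_{p}$ is not normal, or the image of $G\to\Aut(G_{p})$ has order at most $2$. In both cases the relevant subgroup $N_{p}$ is cyclic or dihedral-type of the form $C_{p^{a}}\rtimes C_{2}$ acting by inversion, and I would invoke the known stable rationality of norm one tori (Endo--Miyata's cyclic and dihedral results) to show that each summand, and hence $J|_{N_{p}}$, has trivial flasque class. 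This is where the hypothesis $K_{i}\not\subset K_{j}$ should be used: it controls which coset spaces occur.

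Finally, the converse. If $p\in\mathscr{D}_{k}(\bK)\cap\mathscr{P}_{*}(G)$, then $G_{p}\trianglelefteq G$, every $[K_{i}:k]$ is divisible by $p$, and the conjugation image $C\subseteq\Aut(G_{p})$ has order $\ge3$. I would restrict to the subgroup $G_{p}\rtimes C'$, with $C'$ a cyclic lift of $C$, and show that $[J]^{\fl}\neq0$ there. The intended approach mirrors the proof in \cite{Endo1975} that the norm one torus for $C_{p^{a}}\rtimes C_{m}$ with $m\ge3$ is not stably rational: compute $J^{\fl}$ modulo permutation lattices and detect a nonzero invertible but non-permutation class. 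The class group $\mathrm{Cl}(\Z[\zeta]^{C'})$, or a projective class over $\Z[C_{p}\rtimes C']$, serves as the obstruction. I expect this to be the main obstacle. The multinorm lattice is a sum of several permutation pieces glued along $\Z$, and I must show that the obstruction from each piece with $p\mid[K_{i}:k]$ survives the gluing rather than cancelling. My first attempt would be to reduce to the single-field case: pass to the quotient by a maximal subgroup of $G_{p}$, where all factors become $C_{p}\rtimes C'$-sets without fixed points, and compare the flasque class there with that of $T_{K/k}$ for one factor.
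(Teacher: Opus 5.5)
Your retract-rationality step is fine and matches the paper (Proposition \ref{prop:emqi} (iii) combined with Proposition \ref{prop:rtiv}). The stable-rationality part, however, has a genuine gap: you never get rid of the multinorm structure.

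\textbf{The multinorm gap.} In the case $p\notin\mathscr{P}_{*}(G)$ you conclude that $J|_{N_{p}}$ has trivial flasque class because ``each summand'' does. But $J$ is not a direct sum of norm-one lattices. Its restriction $J|_{N_{p}}\cong J_{N_{p}/\cH_{N_{p}}}$ is again a multinorm lattice, so the cyclic and dihedral results for norm one tori say nothing about it. In the converse direction you explicitly leave open whether the obstruction survives the gluing.

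\textbf{The prime-by-prime frame.} This is also not sound as stated. Vanishing of the flasque class, unlike invertibility (which is what Theorem \ref{thm:qips} uses), is not detected on Sylow subgroups. For $G=C_{7}\rtimes C_{3}$, $J_{G}$ is quasi-permutation over both (cyclic) Sylow subgroups but not over $G$, by Proposition \ref{prop:edgp}. The normalizer version you attribute to Endo--Miyata comes without a precise statement. Moreover, when $G_{p}$ is normal it amounts to a check over $G$ itself, or over $G_{p}\rtimes G'$, which still carries the full multinorm problem. Your individual cases do not fit this frame either:
\begin{itemize}
\item For $p\nmid d_{k}(\bK)$ you only obtain a fixed point of $G_{p}$, whereas you need a splitting over the larger group $N_{p}$. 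For example, take $G=(C_{7}\rtimes C_{3})\times C_{5}$, $H=C_{5}$, $p=5$: every orbit of $C_{5}\times C_{3}$ on $G/H$, let alone of $N_{G}(C_{5})=G$, has size divisible by $3$.
\item $N_{p}$ need not be cyclic or of the form $C_{p^{a}}\rtimes C_{2}$. An example is $C_{3}\rtimes C_{4}$ with $C_{4}$ acting by inversion.
\end{itemize}

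\textbf{The missing idea.} For such $G$, the multinorm lattice is similar to a single norm-one lattice; this is how the paper proceeds.
\begin{itemize}
\item Write $G=N\rtimes G'$ with $N$, $G'$ cyclic of coprime orders (Lemma \ref{lem:hall}).
\item Conjugate each $H_{i}=\Gal(L/K_{i})$ into the form $C_{i}\rtimes H'_{i}$ with $C_{i}<N$ and $H'_{i}<G'$ (Lemma \ref{lem:cjsd}, Proposition \ref{prop:conj}).
\item Every subgroup of the cyclic normal subgroup $N$ is normal in $G$, so $N\cap gH_{i}g^{-1}=C_{i}$ for all $g$. The argument of Lemma \ref{lem:rdos} then yields $[J_{G/\cH}]=[J_{G/H_{0}}]$, where $H_{0}=\langle H_{1},\ldots,H_{r}\rangle=C_{0}\rtimes H'_{0}$.
\item The index of $H_{0}$ is exactly $d_{k}(\bK)$, because $N$ and $G'$ are cyclic of coprime orders.
\item Pass to $G/N^{G}(H_{0})$, for which $\mathscr{P}_{*}(G/N^{G}(H_{0}))=\mathscr{D}_{k}(\bK)\cap\mathscr{P}_{*}(G)$. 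Both directions now follow simultaneously from Proposition \ref{prop:edgp} if $H_{0}$ becomes trivial, and from Proposition \ref{prop:edqp} otherwise.
\end{itemize}
With this reduction, neither a local--global principle for vanishing of flasque classes nor any analysis of how obstructions glue is needed.
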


If $\bK=K$ is a field, then Theorem \ref{mth1} follows from \cite[Theorem 2.3]{Endo1975} and \cite[Theorem 3.1]{Endo2011}. 

\begin{rem}
The set $\mathscr{P}_{*}(G)$ is empty if $G$ is cyclic. In particular, we obtain that $T_{\bK/k}$ is stably rational over $k$ if $L/k$ is cyclic. 
\end{rem}

Second, we argue the case where $T_{\bK/k}$ splits over finite dihedral Galois extensions. Denote by $D_{n}$ the dihedral group of order $2n$, that is, 
\begin{equation*}
    D_{n}:=\langle \sigma_{n},\tau_{n}\mid \sigma_{n}^{n}=\tau_{n}^{2}=1,\tau_{n}\sigma_{n}\tau_{n}^{-1}=\sigma_{n}^{-1}\rangle. 
\end{equation*}
Note that there is an isomorphism $D_{2}\cong (C_{2})^{2}$. 

\begin{thm}\label{mth2}
Let $k$ be a field, and $\bK=\prod_{i=1}^{r}K_i$ a finite {\'e}tale algebra over $k$ that satisfies $K_{i}\not\subset K_{j}$ for any $i \neq j$. Denote by $L/k$ the Galois closure of the composite of $K_{1},\ldots,K_{r}$ over $k$. We further assume that there is an isomorphism $\Gal(L/k)\cong D_{n}$ for some $n\in \Zpn$. 
\begin{enumerate}
\item If $d_{k}(\bK)$ is odd, then $T_{\bK/k}$ is stably rational over $k$. 
\item If $d_{k}(\bK)$ is even, then $T_{\bK/k}$ is stably rational over $k$ if and only if it is retract rational over $k$. Moreover, the above two equivalent conditions hold if and only if there exists a finite product $\bK'=\prod_{i=1}^{r'}K'_{i}$ of intermediate fields $K'_{1},\ldots,K'_{r'}$ of $L/k$ such that 
\begin{equation*}
    T_{\bK/k}\times_{k}S\cong T_{\bK'/k}\times_{k}S'
\end{equation*}
for some quasi-trivial $k$-tori $S$ and $S'$. Here, $d_{k}(\bK)$ and $\bK'$ satisfy one of the following: 
\begin{itemize}
\item[(a)] $r'=1$, $d_{k}(\bK)\in 2\Z \setminus 4\Z$, $d_{k}(\bK)\mid 2n$ and $\Gal(L/K'_{1})\cong \langle \sigma_{n}^{d_{k}(\bK)/2}\rangle$; 
\item[(b)] $r'=1$, $d_{k}(\bK)\in 2\Z \setminus 4\Z$, $d_{k}(\bK)\mid n$ and $\Gal(L/K'_{1})\cong \langle \sigma_{n}^{d_{k}(\bK)},\tau_{n}\rangle$; 
\item[(c)] $r'=2$, $d_{k}(\bK)\in 2\Z \setminus 4\Z$, $d_{k}(\bK)\mid 2n$, $\Gal(L/K'_{1})\cong \langle \sigma_{n}^{d_{k}(\bK)/2}\rangle$ and $\Gal(L/K'_{2})\cong \langle \sigma_{n}^{d_{k}(\bK)},\tau_{n}\rangle$;
\item[(d)] $r'=2$, $d_{k}(\bK)\mid n$, $\Gal(L/K'_{1})\cong \langle \sigma_{n}^{d_{k}(\bK)},\tau_{n}\rangle$ and $\Gal(L/K'_{2})\cong \langle \sigma_{n}^{d_{k}(\bK)},\sigma_{n}\tau_{n}\rangle$. 
\end{itemize}
\end{enumerate}
\end{thm}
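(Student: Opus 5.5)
We translate everything into the language of $G$-lattices for $G=\Gal(L/k)\cong D_{n}$. The character module of $T_{\bK/k}$ is the dual of the kernel of the augmentation-type map: writing $H_{i}:=\Gal(L/K_{i})$, the torus $T_{\bK/k}$ has character lattice $J_{\bK}:=\Ker(\bigoplus_{i}\Z[G/H_{i}]\to\Z)^{\circ}$, equivalently the cokernel of the diagonal $\Z\to\bigoplus_{i}\Z[G/H_{i}]$. By Voskresenskii and Endo--Miyata, $T_{\bK/k}$ is stably rational iff the flasque class $[J_{\bK}]^{\fl}$ vanishes, and retract rational iff it is invertible. The first step is a reduction along the $d_{k}(\bK)$ invariant, using the results of \cite{Hasegawa} and the mechanism behind Theorem \ref{mtd1}: for any $p\nmid d_{k}(\bK)$, some $[K_{i}:k]$ is prime to $p$. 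Hence, after restricting to a Sylow $p$-subgroup $G_{p}$, the lattice $J_{\bK}$ contains a permutation summand splitting the diagonal, and $[J_{\bK}]^{\fl}$ restricted to $G_{p}$ is trivial. Because $D_{n}$ has cyclic Sylow $p$-subgroups for odd $p$ and dihedral (or cyclic) $2$-Sylow subgroups, and flasque classes over groups all of whose Sylow subgroups are cyclic or dihedral are detected Sylow-locally in the appropriate sense (Endo--Miyata, as used for Theorem \ref{mth1}), the problem localizes at the primes dividing $d_{k}(\bK)$.

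For (1), $d_{k}(\bK)$ is odd, so at $p=2$ the class vanishes. At an odd prime $p\mid d_{k}(\bK)$ the Sylow subgroup $C_{p^{a}}\subset\langle\sigma_{n}\rangle$ is normal, but the conjugation image in $\Aut$ is $\{\pm1\}$, of order $\le 2$. So $p\notin\mathscr{P}_{*}$, and the argument of Theorem \ref{mth1} applied to the metacyclic subgroup $\langle\sigma_{n}\rangle\rtimes\langle\tau_{n}\rangle$ at $p$ gives vanishing. I would make this rigorous by passing to the subgroup $N_{p}:=C_{p^{a}}\rtimes\langle\tau_{n}\rangle$ (a dihedral group with cyclic Sylows, where Theorem \ref{mth1} applies) and using a restriction–corestriction argument. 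The index $[G:N_{p}]$ is prime to $p$, and the $2$-part is handled separately. Together these give $[J_{\bK}]^{\fl}=0$.

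For (2), the key is to construct a canonical reduction. Using the lemmas of \cite{Hasegawa} on multinorm tori, namely that adding a factor $K_{j}$ containing some $K_{i}$, or replacing factors up to quasi-trivial tori, preserves the flasque class, I would show that $J_{\bK}$ is stably equivalent, modulo permutation lattices, to $J_{\bK'}$ for $\bK'$ built from at most two intermediate fields. These fields have stabilizers of the listed shapes $\langle\sigma_{n}^{m}\rangle$, $\langle\sigma_{n}^{m},\tau_{n}\rangle$ and $\langle\sigma_{n}^{m},\sigma_{n}\tau_{n}\rangle$, since subgroups of $D_{n}$ are cyclic rotations or dihedral up to conjugacy. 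The remaining task is a finite classification: compute $[J_{\bK'}]^{\fl}$ at $p=2$ for each configuration of subgroup types. I would do this by restricting to the $2$-Sylow $D_{2^{b}}$ and using the known computation of $H^{1}$ of flasque resolutions and Tate cohomology $\hat H^{-1}$ for dihedral $2$-groups. This should show the following:
\begin{itemize}
\item invertibility forces the class to be zero, which gives stable rationality $\Leftrightarrow$ retract rationality, since dihedral groups have the property that invertible flasque lattices coming from these norm lattices are stably permutation;
\item nonvanishing occurs exactly outside cases (a)–(d).
\end{itemize}
Necessity would be shown by exhibiting a subgroup, typically a Klein four subgroup $\langle\sigma_{n}^{n/2},\tau_{n}\rangle$ or a $C_{4}$-containing subgroup when $4\mid d_{k}(\bK)$, on which $H^{1}(\,\cdot\,,[J]^{\fl})\ne0$, using the obstruction from Theorem \ref{mtd2}.

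I expect the main obstacle to be the equivalence of stable and retract rationality together with the exact list (a)–(d): in particular, proving that in case (d) with $4\mid d_{k}(\bK)$ the flasque class is genuinely zero and not merely invertible. For that step I would first try an explicit isomorphism of lattices, $J_{G/H_{1}\sqcup G/H_{2}}\oplus P\cong P'$, built from the two non-conjugate reflection classes. If that fails, I would fall back on Endo's $H^{1}$-vanishing criterion for dihedral groups.
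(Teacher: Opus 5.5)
Your plan has a genuine gap. It rests on the claim that vanishing of $[J_{\bK}]^{\fl}$ is detected Sylow-locally, or can be glued from subgroups of coprime index by restriction--corestriction. That claim is false. Only \emph{invertibility} of the flabby class (retract rationality) is Sylow-local; this is Theorem \ref{thm:qips}, resting on \cite[Corollary 1.6]{Endo2011}. Vanishing is not Sylow-local. For $G=C_{7}\rtimes C_{3}$, the lattice $J_{G}$ restricts to quasi-permutation lattices on both (cyclic) Sylow subgroups, yet it is not quasi-permutation by Proposition \ref{prop:edgp}. Indeed Theorem \ref{mth1}, which you invoke, says precisely that there is a global obstruction $\mathscr{P}_{*}(G)$ invisible on cyclic Sylow subgroups. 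There is also no transfer on the monoid $\sS(G)$ making $\mathrm{cor}\circ\mathrm{res}$ multiplication by the index. So knowing quasi-permutation over $N_{p}=C_{p^{a}}\rtimes\langle\tau_{n}\rangle$ and over the $2$-Sylow subgroup tells you nothing about $D_{n}$ when $n$ is even. Consequently, your arguments for (1), for the ``if'' half of (2), and for stable $\Leftrightarrow$ retract rationality do not go through. The last of these you simply assert for dihedral groups, whereas in the paper it is an output of the full classification.

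The missing idea is a \emph{global} reduction. The complement of each odd rotation part $N_{p}\subset\langle\sigma_{n}\rangle$ acts on it by $\pm1$. Hence Lemma \ref{lem:rdos} and then Lemma \ref{lem:dnwl} replace $\cH$ by a strongly reduced $\cH'$ with $J_{G/\cH}\oplus R\cong J_{G/\cH'}\oplus R'$ ($R,R'$ permutation), whose members lie, up to conjugacy, in members of the explicit list $\cH_{d}$. For odd $d$ this forces $\cH'$ to be a single dihedral subgroup of index $d$, and Endo's global result (Proposition \ref{prop:edqp}, via Theorem \ref{thm:emmn}) finishes. This reduction does not by itself produce ``at most two fields'': $\cH_{d}$ has three members when $d$ is even. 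The bound $\#\cH'\leq 2$ comes from Proposition \ref{prop:tgqp} only once quasi-invertibility is assumed. That is also how necessity should be argued. Theorem \ref{mtd2} cannot serve, since its hypotheses exclude, e.g., a single field with $4\mid d$.

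Sufficiency in (2) likewise needs a global quasi-permutation theorem in each case: Theorem \ref{thm:emmn} for (a), \cite[Theorem 1.2 (ii)]{Hoshi2024} for (b), and \cite[Theorem 6.1]{Hasegawa} for (d). Case (c) needs the new Theorem \ref{thm:nzf2}. There an explicit coflabby resolution $0\to E_{m}\to R_{m}\to I_{m}\to 0$ over $D_{2m}$ ($m$ odd) is constructed, $E_{m}$ is shown to be invertible, and $E_{m}\oplus\Z[D_{2m}/\langle\sigma_{2m}^{2},\sigma_{2m}^{m}\tau_{2m}\rangle]$ is shown to be permutation. Your plan never addresses (c), which is the genuinely new case, and places the main difficulty in (d), which is prior work.

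Moreover, the attack you propose for (d) cannot succeed. From $0\to I_{G/\cH}\to\bigoplus_{H\in\cH}\Z[G/H]\to\Z\to 0$ and $H^{1}(G,\Z[G/H])=0$ one gets $H^{1}(G,I_{G/\cH})\cong\Z/d_{G}(\cH)\neq 0$ for $d>1$. So $J_{G/\cH}=I_{G/\cH}^{\circ}$ is not even flabby, and no isomorphism $J_{G/\cH}\oplus P\cong P'$ with $P,P'$ permutation exists. What must be shown is that the coflabby kernel of some coflabby resolution of $I_{G/\cH}$ is stably permutation; dually, the flabby cokernel of a flabby resolution of $J_{G/\cH}$. $H^{1}$-vanishing criteria can at most certify that this lattice is invertible, and that is exactly the gap between retract and stable rationality you need to close.
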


If $\bK$ is a field, Theorem \ref{mth2} follows from \cite[Theorem 1.1]{Hoshi2024}. Our contribution to Theorem \ref{mth2} is the case that $\bK$ is not a field. 

\vspace{6pt}

We now explain our proof of main theorems. It is based on the standard method, which rephrases the stable (resp.~retract) rationality of algebraic tori by means of their character groups. For an algebraic torus $T$ over $k$, let
\begin{equation*}
    X^{*}(T):=\Hom_{k^{\sep}\text{-groups}}(T\otimes_{k}k^{\sep},\G_{m,k^{\sep}}). 
\end{equation*}
Let $L/k$ be a finite Galois extension that contains $K_{1},\ldots,K_{r}$. Then, the Galois group $\Gal(L/k)$ acts on $X^{*}(T_{\bK/k})$. Moreover, it is known that $T$ is stably (resp.~retract) rational over $k$ if and only if $X^{*}(T)$ is quasi-permutation (resp.~quasi-invertible). 

For Theorem \ref{mtd1}, we prove that $X^{*}(T_{\bK/k})\oplus \Z$ is a permutation $\Gal(L/k)$-lattice. Moreover, we obtain the following as an application of Theorem \ref{mtd1}. 

\begin{thm}\label{mth3}
Let $k$ be a field, $\bK=\prod_{i=1}^{r}K_{i}$ a finite {\'e}tale algebra over $k$. Take a finite Galois extension $L$ of $k$ containing $K_{1},\ldots,K_{r}$. For each prime divisor $p$ of $d_{k}(\bK)$, pick a maximal subextension $L^{(p)}$ of $L/k$ such that its extension degree is coprime to $p$. Then, the following are equivalent: 
\begin{enumerate}
    \item $T_{\bK/k}$ is retract rational over $k$; 
    \item $T_{\bK/k}$ is retract rational over $L^{(p)}$ for any prime divisor $p$ of $d_{k}(\bK)$. 
\end{enumerate}
\end{thm}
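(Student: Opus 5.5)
The plan is to translate both conditions into statements about the character lattice and then reduce to Sylow subgroups. Put $G:=\Gal(L/k)$ and $M:=X^{*}(T_{\bK/k})$. Then $T_{\bK/k}$ is retract rational over $k$ if and only if $M$ is an invertible $G$-lattice. For a subextension $L^{(p)}$ with $H_{p}:=\Gal(L/L^{(p)})$, the torus $T_{\bK/k}\otimes_{k}L^{(p)}$ is split by $L$, and its character lattice is $M$ restricted to $H_{p}$. Since $L^{(p)}$ is a maximal subextension of degree prime to $p$, $H_{p}$ contains a $p$-Sylow subgroup of $G$ as a subgroup of index prime to $p$; in the Galois-closure situation one can simply take $H_{p}$ to be a $p$-Sylow subgroup $G_{p}$. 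Condition (2) therefore says that $M|_{H_{p}}$ is invertible for every $p\mid d_{k}(\bK)$. The implication (1)$\Rightarrow$(2) is immediate, because restriction preserves invertibility.

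For (2)$\Rightarrow$(1) I would use the standard fact that a $G$-lattice is invertible if and only if its restriction to a $p$-Sylow subgroup is invertible for every prime $p$ (Colliot-Th\'el\`ene--Sansuc, Endo--Miyata). Invertibility of $M|_{H_{p}}$ implies invertibility on the Sylow subgroup $G_{p}\subset H_{p}$. So the real task is to handle the primes $p$ with $p\nmid d_{k}(\bK)$, and this is where Theorem \ref{mtd1} comes in.

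Fix a prime $p\nmid d:=d_{k}(\bK)$, and let $k_{p}:=L^{G_{p}}$. Base-changing $T_{\bK/k}$ to $k_{p}$ gives the multinorm one torus attached to the étale $k_{p}$-algebra $\bK\otimes_{k}k_{p}$. Its factors are the fields $K_{i}\otimes_{k}k_{p}$, which decompose by double cosets $G_{p}\backslash G/\Gal(L/K_{i})$. The degrees of these factors over $k_{p}$ are $\#G_{p}/\#(G_{p}\cap gH_{i}g^{-1})$, so they are powers of $p$ and sum to $[K_{i}:k]$. Now $d_{k_{p}}(\bK\otimes_{k}k_{p})$ is a power of $p$ that divides each sum $[K_{i}:k]$, hence it divides $d$. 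Since $p\nmid d$, we get $d_{k_{p}}=1$. Theorem \ref{mtd1} then says the torus is stably rational over $k_{p}$, so $M|_{G_{p}}$ is quasi-permutation and in particular invertible. One degenerate case needs care: if the base change has a single factor, that factor has degree $1$, so the torus is trivial. The case $r=1$ over $k_{p}$ is harmless in the same way.

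Combining the two kinds of primes, $M|_{G_{p}}$ is invertible for every prime $p$, hence $M$ is invertible and $T_{\bK/k}$ is retract rational. I expect the main obstacle to be justifying the Sylow-local criterion for invertibility and matching it precisely with the fields $L^{(p)}$. If the $L^{(p)}$ are not fixed fields of Sylow subgroups, I would argue directly with $H_{p}\supseteq G_{p}$, using that restriction to any subgroup preserves invertibility. For the local criterion itself I would argue as follows. If $M|_{G_{p}}$ is a direct summand of a permutation lattice for all $p$, then, since $[G:G_{p}]$ is prime to $p$, the composition of restriction and corestriction shows that $M\otimes\Z_{(p)}$ is a summand of a permutation module. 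Gluing over all $p$ then uses the Endo--Miyata result that a lattice which is locally invertible at all primes is invertible.
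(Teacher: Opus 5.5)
Your architecture is the paper's own: restriction for (i)$\Rightarrow$(ii), the $d=1$ case over a Sylow subgroup for the primes $p\nmid d_k(\bK)$, and a Sylow-local gluing criterion. However, your starting dictionary is wrong, and as written the argument does not prove the theorem. Retract rationality of $T$ corresponds to $M=X^{*}(T)$ being \emph{quasi-invertible}, i.e.\ to the flabby class $[M]^{\fl}$ being invertible (Proposition \ref{prop:rtiv}). It does not correspond to $M$ itself being invertible. Already for a cyclic extension $K/k$ of prime degree $p$ the norm one torus is rational. Yet $M=J_{C_p}$ satisfies $H^{1}(C_p,J_{C_p})\cong H^{2}(C_p,\Z)\cong \Z/p\Z\neq 0$, so $M$ is not coflabby and hence not invertible. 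Consequently the hypothesis (ii) does not give you that $M|_{H_p}$ is invertible. Your step ``quasi-permutation and in particular invertible'' is also false in general, since quasi-permutation only yields quasi-invertible. (In the $d=1$ situation $M|_{G_p}$ does happen to be stably permutation, by the splitting in Theorem \ref{thm:gcdo}, but that is not the reason you gave.)

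The repair is to replace ``invertible'' by ``quasi-invertible'' throughout.
\begin{itemize}
\item The implication (i)$\Rightarrow$(ii) is then Proposition \ref{prop:rtrd}.
\item For $p\nmid d$, your base-change computation of the factor degrees $(G_p:G_p\cap gH_ig^{-1})$ is the field-theoretic form of Proposition \ref{prop:rtmc} and Corollary \ref{cor:hppo}. Together with Theorem \ref{mtd1} it shows that $M|_{G_p}$ is quasi-permutation, hence quasi-invertible.
\item For the gluing you need the Sylow-local criterion for \emph{quasi}-invertibility. Get it from the criterion for invertibility applied to the flabby lattice. A flabby resolution $0\to M\to P\to F\to 0$ restricts to a flabby resolution over each $G_p$. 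Hence $M$ is quasi-invertible iff $F$ is invertible, iff every $F|_{G_p}$ is invertible, iff every $M|_{G_p}$ is quasi-invertible. This is \cite[Corollary 1.6]{Endo2011}, which is exactly what the paper invokes to finish Theorem \ref{thm:qips}.
\end{itemize}

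For the invertible case itself, there is a cleaner argument than your restriction--corestriction and local-global sketch. Take a coflabby resolution $0\to F'\to P'\to E\to 0$. Its class in $H^{1}(G,\Hom(E,F'))$ dies on each $G_p$ by Lemma \ref{lem:lstr}. It therefore vanishes, because restriction to all Sylow subgroups is jointly injective on $H^{1}$, so $E$ is a summand of $P'$.

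Finally, maximality of $L^{(p)}$ always forces $\Gal(L/L^{(p)})$ to be a $p$-Sylow subgroup, so no case distinction is needed there. With these corrections your proof coincides with the paper's.
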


\begin{rem}\label{rem:rdct}
The condition (ii) in Theorem \ref{mth3} can be determined by the previous result (\cite[Theorems 1.2, 1.3]{Hasegawa}). Therefore, we obtain a criterion for arbitrary multinorm one tori to be retract rational.
\end{rem}

Theorems \ref{mtd2} and \ref{mtd3} follow from the criterion in Remark \ref{rem:rdct}. Next, we give an outline of the proof of Theorem \ref{mth1}. 
The field case $\bK=K$ was established by \cite{Endo1975} and \cite{Endo2011}.
Hence it remains to treat the case $r\geq 2$, that is, $\bK$ is not a field.
Using a technique introduced by \cite[\S 3, \S 8]{Hasegawa}, 
which appropriately replaces {\'e}tale algebras, we reduce the desired assertion to the field case.
The technique mentioned above is also used in the proof of Theorem \ref{mth2}, and in particular we may assume that $d_{k}(\bK)$ equals $n$ or $2n$. Then, (i) is a consequence of Theorem \ref{mth1}. Regarding (ii), except for the part concerning the stable rationality of (c), this can be resolved by combining previous research, including \cite{Hasegawa}. Finally, the remaining part, that is, $T_{\bK'/k}$ is stably rational provided that $\bK'$ satisfies (c), is proved by specifically implementing coflabby resolution of $X_{*}(T_{\bK'/k})$. 

\begin{oftp}
In Section \ref{sect:rvrt}, we recall some basic theory on algebraic tori and their character groups that will be used in this paper. 
Next, we restrict our attention to multinorm one tori in Section \ref{sect:mnot}. More precisely, we list some general facts from \cite{Hasegawa}. 
All sections from Section \ref{sect:pf13} onward are proofs of the main theorems. 
We first prove Theorems \ref{mtd1} and \ref{mth3} in Section \ref{sect:pf13}. 
Second, in Section \ref{sect:pf23}, proofs of Theorems \ref{mtd2} and \ref{mtd3} are provided. 
Furthermore, we conclude the proof of Theorem \ref{mth1} in Section \ref{sect:pfzg}. 
Section \ref{sect:cflb} is the most technical part of this paper, which is devoted to proving that $T_{\bK'/k}$ is stably rational when $\bK'$ satisfies (c) in Theorem \ref{mth2}. 
Finally, we complete a proof of Theorem \ref{mth2} in Section \ref{sect:pfdn} using the results from the previous sections.
\end{oftp}

\begin{ack}
The third named author was carried out with the support of JSPS KAKENHI Grant Number JP24K16884.
\end{ack}

\begin{nota}
Let $G$ be a finite group. 
\begin{itemize}
\item For a subgroup $H$ of $G$, we define $N^{G}(H)$ and $N_{G}(H)$ as follows: 
\begin{equation*}
N^{G}(H):=\bigcap_{g\in G}gHg^{-1},\quad
N_{G}(H):=\{g\in G\mid gHg^{-1}=H\}. 
\end{equation*}
Note that $N^{G}(H)$ and $N_{G}(H)$ are called the \emph{normal core} of $H$ and the \emph{normalizer} of $H$ in $G$, respectively. 
\item A $G$-lattice refers to a finitely generated free abelian group equipped with a left action of $G$. For a $G$-lattice $M$, the dual lattice of $M$ is denoted by
\begin{equation*}
    M^{\circ}:=\Hom_{\Z}(M,\Z). 
\end{equation*}
Here we define a left action of $G$ on $M^{\circ}$ as
\begin{equation*}
    G\times M^{\circ} \rightarrow M^{\circ};\,(g,f)\mapsto [x\mapsto f(g^{-1}x)]. 
\end{equation*}
\item Let $M$ be a $G$-lattice. For a normal subgroup $N$ of $G$, we define $G/N$-lattices $M^{N}$ and $M^{[N]}$ as follows: 
\begin{equation*}
M^{N}:=\{x\in M\mid n(x)=x\text{ for all }n\in N\},\quad 
M^{[N]}:=\left((M^{\circ})^{N}\right)^{\circ}. 
\end{equation*}
Note that $M^{[N]}$ is isomorphic to $M_{N}/M_{N,\tor}$, where $M_{N}$ denotes the $N$-coinvariant part of $M$, and $M_{N,\tor}$ is the torsion part of $M_{N}$. 
\end{itemize}
\end{nota}

\section{Basic facts on the rationality of tori}\label{sect:rvrt}

Let $k$ be a field. For a non-negative integer $n$, we denote by $\P_{k}^{n}$ the projective space of dimension $n$ over $k$. Consider an algebraic variety over $k$. We say that $X$ is
\begin{itemize}
    \item \emph{rational} over $k$ if it is birationally equivalent to a projective space over $k$; 
    \item \emph{stably rational} over $k$ if $X\times_{k}\P_{k}^{m}$ is rational over $k$ for some $m\in \Znn$; 
    \item \emph{retract rational} over $k$ if there exist rational maps $f\colon \P_{k}^{n}\dashrightarrow X$ and $g\colon X\dashrightarrow \P_{k}^{n}$ with $n\in \Znn$ such that $f\circ g=\id_{X}$; 
    \item \emph{unirational} over $k$ if there is a dominant rational map from $\P_{k}^{n}$ to $X$ for some $n\in \Znn$. 
\end{itemize}

The notion of retract rationality was originally introduced by Saltman (\cite{Saltman1984}) in the case where $k$ is infinite (see also \cite{Kang2012}). It has been generalized for all varieties over arbitrary fields by Merkurjev (\cite{Merkurjev2017}). Note that one has implications
\begin{center}
rational $\Rightarrow$ stably rational 
$\Rightarrow$ retract rational $\Rightarrow$ unirational. 
\end{center}

In this paper, we concentrate on the case where $X$ is an algebraic torus over $k$. Fix a separable closure $k^{\sep}$ of $k$. Then, we can rephrase the stable rationality and the retract rationality of algebraic tori by means of $G$-lattices, where $G$ is a finite quotient of the Galois group of $k^{\sep}/k$. We follow the same terminology as \cite{Lorenz2005}, \cite{Endo2011} and \cite[Section 2]{Hasegawa}. 

\begin{dfn}[{\cite[\S 1]{Endo2011}, \cite[Definition 2.1]{Hasegawa}}]
Let $G$ be a finite group. 
\begin{enumerate}
\item A $G$-lattice $M$ is said to be \emph{permutation} if it has a $\Z$-basis permuted by $G$, i.e.~
    \begin{equation*}
        M\cong \bigoplus_{i=1}^{m} \Z[G/H_i]
    \end{equation*}
for some subgroups $H_1,\ldots, H_m$ of $G$. 
\item A $G$-lattice $M$ is said to be \emph{stably permutation} if $M\oplus R\cong R'$ for some permutation $G$-lattices $R$ and $R'$.
\item A $G$-lattice $M$ is said to be \emph{quasi-permutation} if there is an exact sequence of $G$-lattices
\begin{equation*}
     0\rightarrow M \rightarrow R \rightarrow F \rightarrow 0, 
\end{equation*}
where $R$ and $F$ are permutation. 
\item A $G$-lattice $M$ is said to be \emph{quasi-invertible} if it is a direct summand of a quasi-permutation $G$-lattice.
\end{enumerate}
\end{dfn}

It is not difficult to confirm the following:
\begin{center}
permutation $\Rightarrow$ stably permutation $\Rightarrow$ quasi-permutation $\Rightarrow$ 
quasi-invertible. 
\end{center}

\begin{dfn}[{\cite[\S\S 2.3--2.5]{Lorenz2005}}]
Let $G$ be a finite group. We say that a $G$-lattice $M$ is
\begin{enumerate}
\item \emph{invertible} (or, \emph{permutation projective}) if it is a direct summand of a permutation $G$-lattice;
\item \emph{coflabby} if $H^{1}(H,M)=0$ for any subgroup $H$ of $G$;
\item \emph{flabby} if $M^{\circ}$ is coflabby. 
\end{enumerate}
\end{dfn}

It is known that the following hold: 
\begin{center}
    stably permutation $\Rightarrow$ invertible $\Rightarrow$ flabby and coflabby. 
\end{center}
Here the rightmost implication is a consequence of \cite[(1.2) Proposition]{Lenstra1974}. 

\vspace{6pt}
Let $G$ be a finite group. We say that $G$-lattices $M_{1}$ and $M_{2}$ are \emph{similar} if there exist permutation $G$-lattices $R_{1}$ and $R_{2}$ such that $M_{1}\oplus R_{1}\cong M_{2}\oplus R_{2}$. We denote by $\sS(G)$ the set of similarity classes of $G$-lattices. For a $G$-lattice $M$, we write for $[M]$ the similarity class containing $M$. Then $\sS(G)$ is a commutative monoid with respect to the sum
\begin{equation*}
    [M_{1}]+[M_{2}]:=[M_{1}\oplus M_{2}]. 
\end{equation*}
By definition, one has the following for any $G$-lattice $M$: 
\begin{itemize}
    \item $[M]=0$ if and only if $M$ is stably permutation; and
    \item $[M]$ is invertible in $\sS(G)$ if and only if $M$ is invertible. 
\end{itemize}

\begin{dfn}[{\cite[\S 2.6]{Lorenz2005}}]
Let $G$ be a finite group, and $M$ a $G$-lattice. 
\begin{enumerate}
\item A \emph{coflabby resolution} of $M$ is an exact sequence of $G$-lattices
\begin{equation*}
0\rightarrow F \rightarrow R \rightarrow M \rightarrow 0, 
\end{equation*}
where $R$ is permutation and $F$ is coflabby. 
\item A \emph{flabby resolution} of $M$ is an exact sequence of $G$-lattices
\begin{equation*}
0\rightarrow M \rightarrow R \rightarrow F \rightarrow 0, 
\end{equation*}
where $R$ is permutation and $F$ is flabby. 
\end{enumerate}
\end{dfn}

There is a coflabby resolution for any $G$-lattice, which is a consequence of \cite[Lemma 1.1]{Endo1975}. This implies the existence of a flabby resolution of every $G$-lattice. Moreover, if 
\begin{equation*}
0\rightarrow M \rightarrow R \rightarrow F \rightarrow 0
\end{equation*}
is a flabby resolution of $M$, then the class $[F]$ in $\sS(G)$ only depends on $M$. In the sequel, we denote $[F]$ by $[M]^{\fl}$. It is known that the map
\begin{equation*}
    \sS(G)\rightarrow \sS(G);\,[M]\mapsto [M]^{\fl}
\end{equation*}
is an endomorphism of monoids. In particular, we obtain implications as follows: 
\begin{center}
    stably permutation $\Rightarrow$ quasi-permutation,\quad invertible $\Rightarrow$ quasi-invertible. 
\end{center}

\begin{prop}[{\cite[Proposition 2.6]{Hasegawa}}]\label{prop:rtrd}
Let $G$ be a finite group, and $H$ its subgroup. Consider a $G$-lattice $M$. If $M$ is a quasi-permutation (resp.~quasi-invertible) $G$-module, then it is so as an $H$-lattice. 
\end{prop}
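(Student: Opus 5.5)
We must show: if $M$ is quasi-permutation (resp. quasi-invertible) as a $G$-lattice, then it is so as an $H$-lattice. The plan is to restrict the defining data along $H\subset G$ and check that each ingredient survives. The one fact needed is that restriction sends permutation $G$-lattices to permutation $H$-lattices. For this we use the Mackey decomposition: for a subgroup $K\subset G$,
\begin{equation*}
\Z[G/K]|_{H}\cong \bigoplus_{HgK\in H\backslash G/K}\Z[H/(H\cap gKg^{-1})],
\end{equation*}
because $H$ permutes the basis $G/K$ and the orbits are the double cosets $HgK$, with stabilizer of $gK$ equal to $H\cap gKg^{-1}$. Equivalently, any $\Z$-basis permuted by $G$ is in particular permuted by $H$. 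Hence $R|_{H}$ is permutation whenever $R$ is.

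For the quasi-permutation case, start from an exact sequence of $G$-lattices $0\rightarrow M\rightarrow R\rightarrow F\rightarrow 0$ with $R$ and $F$ permutation. Restriction to $H$ is exact on underlying abelian groups and preserves the property of being finitely generated and $\Z$-free. So $0\rightarrow M|_{H}\rightarrow R|_{H}\rightarrow F|_{H}\rightarrow 0$ is an exact sequence of $H$-lattices whose middle and right terms are permutation by the first paragraph. Therefore $M|_{H}$ is quasi-permutation.

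For the quasi-invertible case, suppose $M\oplus M'\cong Q$ as $G$-lattices with $Q$ quasi-permutation. Restricting this isomorphism gives $M|_{H}\oplus M'|_{H}\cong Q|_{H}$, and $Q|_{H}$ is quasi-permutation by the previous step. So $M|_{H}$ is a direct summand of a quasi-permutation $H$-lattice, i.e. quasi-invertible.

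No step is a real obstacle. The only point requiring care is the Mackey decomposition, which amounts to orbit–stabilizer bookkeeping.
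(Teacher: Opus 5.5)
Your proof is correct: restricting the defining exact sequence $0\rightarrow M\rightarrow R\rightarrow F\rightarrow 0$ and the direct-sum decomposition $M\oplus M'\cong Q$ along $H\subset G$, with the Mackey decomposition showing that each $\Z[G/K]$ restricts to a permutation $H$-lattice, establishes both claims. The paper gives no proof of its own here (it only cites \cite[Proposition 2.6]{Hasegawa}), and your restriction argument is the standard proof of this fact.
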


\begin{lem}[{\cite[p.~179, Lemme 2 (i), (ii), (iii)]{ColliotThelene1977}}]\label{lem:qtfx}
Let $G$ be a finite group, and $N$ its normal subgroup. Consider a $G$-lattice $M$. 
\begin{enumerate}
\item If $M$ is a permutation $G$-lattice, then $M^{N}$ is a permutation $G/N$-lattice. 
\item If $M$ is a coflabby $G$-lattice, then $M^{N}$ is a coflabby $G/N$-lattice. 
\item Let
\begin{equation*}
0\rightarrow F \rightarrow R \rightarrow M \rightarrow 0
\end{equation*}
be a coflabby resolution of $M$ in $G$-lattices. Then
\begin{equation*}
0\rightarrow F^{N} \rightarrow R^{N} \rightarrow M^{N} \rightarrow 0
\end{equation*}
is a coflabby resolution of $M^{N}$ in $G/N$-lattices. 
\end{enumerate}
\end{lem}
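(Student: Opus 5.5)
The statement to prove is Lemma \ref{lem:qtfx}, which has three parts: fixed points of permutation lattices, fixed points of coflabby lattices, and the resulting coflabby resolution. I would prove each part directly from the definitions, in order.

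\textbf{Part (i).} It suffices to treat $M=\Z[G/H]$, because taking $N$-fixed points commutes with finite direct sums. The $N$-orbits on $G/H$ are the double cosets $N\backslash G/H$. Since $N$ is normal, these coincide with the cosets of $NH$ in $G$, so $N\backslash G/H = G/NH$.

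The fixed sublattice $M^{N}$ has as $\Z$-basis the orbit sums $s_{gNH}=\sum_{x\in gNH/H} xH$. An element $g'\in G$ sends the orbit sum $s_{gNH}$ to $s_{g'gNH}$, so $G/N$ permutes this basis. Hence $M^{N}\cong \Z[(G/N)/(NH/N)]$, which is a permutation $G/N$-lattice.

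\textbf{Part (ii).} Let $H'=H/N$ be a subgroup of $G/N$, where $N\le H\le G$. The inflation--restriction sequence for the normal subgroup $N$ of $H$ gives an injection
\begin{equation*}
0\rightarrow H^{1}(H/N,M^{N})\rightarrow H^{1}(H,M).
\end{equation*}
The right-hand group vanishes because $M$ is coflabby. Therefore $H^{1}(H/N,M^{N})=0$ for every subgroup of $G/N$, so $M^{N}$ is coflabby.

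\textbf{Part (iii).} Take $N$-invariants of the given coflabby resolution. This yields the long exact sequence
\begin{equation*}
0\rightarrow F^{N}\rightarrow R^{N}\rightarrow M^{N}\rightarrow H^{1}(N,F).
\end{equation*}
The last term vanishes since $F$ is coflabby, so the sequence of invariants is short exact. Each term is a lattice, being a subgroup of a lattice. By (i), $R^{N}$ is a permutation $G/N$-lattice, and by (ii), $F^{N}$ is a coflabby $G/N$-lattice. This is exactly a coflabby resolution of $M^{N}$.

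\textbf{Main obstacle.} No step is really difficult. The only point needing care is in (i): identifying the $N$-orbits on $G/H$ with $G/NH$, which uses the normality of $N$, and checking that the induced action of $G/N$ permutes the orbit sums.
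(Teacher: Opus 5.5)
Your proof is correct: the orbit sums show $\Z[G/H]^{N}\cong\Z[(G/N)/(NH/N)]$, the injectivity of inflation gives (ii), and $H^{1}(N,F)=0$ gives the surjectivity $R^{N}\to M^{N}$ in (iii). The paper gives no proof of its own, only the citation to Colliot-Th{\'e}l{\`e}ne--Sansuc, and your argument is the standard one behind that lemma.
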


For a torus $T$ over a field $k$, we define the cocharacter module $X_{*}(T)$ and the character module $X_{*}(T)$ as follows: 
\begin{equation*}
X_{*}(T):=\Hom(\G_{m,k^{\sep}},T\otimes_{k}k^{\sep}),\quad 
X^{*}(T):=\Hom(T\otimes_{k}k^{\sep},\G_{m,k^{\sep}}). 
\end{equation*}
These are finite free abelian groups equipped with continuous actions of $\Gal(k^{\sep}/k)$ (with respect to discrete topology).

\begin{prop}[{\cite[Proposition 9.5.3, Proposition 9.5.4]{Lorenz2005}}]\label{prop:rtiv}
Let $k$ be a field, and $T$ a torus over $k$ which splits over a finite Galois extension $L$ of $k$. Put $G:=\Gal(L/k)$. Then $T$ is stably rational (resp.~retract rational) over $k$ if and only if the $G$-lattice $X^{*}(T)$ is quasi-permutation (resp.~quasi-invertible). 
\end{prop}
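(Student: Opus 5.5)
The plan is to separate the statement into a geometric input, which I would quote from the literature, and a purely lattice-theoretic translation, which I would prove directly. The geometric input is the theorem of Voskresenski\u{\i} and Saltman, in the form recorded in \cite{Lorenz2005}. Choose a smooth projective $G$-equivariant compactification $X$ of $T$, available for tori split by $L$. Then there is a flabby resolution
\begin{equation*}
0\rightarrow X^{*}(T)\rightarrow R\rightarrow \Pic(X\otimes_{k}L)\rightarrow 0
\end{equation*}
of $G$-lattices, so that $[X^{*}(T)]^{\fl}=[\Pic(X\otimes_{k}L)]$. Moreover:
\begin{itemize}
\item $T$ is stably rational if and only if this class is $0$ (Voskresenski\u{\i});
\item $T$ is retract rational if and only if this class is invertible in $\sS(G)$ (Saltman for infinite $k$, Merkurjev in general).
\end{itemize}
I would take these facts as given; they are the genuinely deep step and the main obstacle, since they rest on birational geometry of toric models and cannot be recovered from lattice theory alone. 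It therefore remains to show, for any $G$-lattice $M$:
\begin{itemize}
\item $M$ is quasi-permutation if and only if $[M]^{\fl}=0$;
\item $M$ is quasi-invertible if and only if $[M]^{\fl}$ is invertible.
\end{itemize}

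\emph{Quasi-permutation case.} Suppose first that $0\rightarrow M\rightarrow R\rightarrow F\rightarrow 0$ with $R$ and $F$ permutation. Permutation lattices are flabby, so this sequence is itself a flabby resolution, and hence $[M]^{\fl}=[F]=0$. Conversely, suppose $0\rightarrow M\rightarrow P\rightarrow F\rightarrow 0$ is a flabby resolution with $F\oplus R\cong R'$ for permutation lattices $R$ and $R'$. Adding $R$ to the last two terms gives
\begin{equation*}
0\rightarrow M\rightarrow P\oplus R\rightarrow R'\rightarrow 0,
\end{equation*}
which exhibits $M$ as quasi-permutation.

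\emph{Quasi-invertible case, forward direction.} If $M\oplus M'=N$ with $N$ quasi-permutation, then, since $[\cdot]^{\fl}$ is a monoid endomorphism, we get $[M]^{\fl}+[M']^{\fl}=[N]^{\fl}=0$. Hence $[M]^{\fl}$ is invertible.

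\emph{Quasi-invertible case, converse.} Let $[M]^{\fl}=[F]$ be invertible, with inverse $[F']$. Then $F'$ is invertible, hence flabby. Choose a surjection $Q\rightarrow F'$ from a permutation lattice, for instance a free $\Z[G]$-module, and let $M'$ be its kernel. The sequence $0\rightarrow M'\rightarrow Q\rightarrow F'\rightarrow 0$ is a flabby resolution, so $[M']^{\fl}=[F']$. Then $[M\oplus M']^{\fl}=[F]+[F']=0$, so $M\oplus M'$ is quasi-permutation by the first equivalence, and $M$ is a direct summand of it.

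Finally, the statement is phrased with the finite group $G=\Gal(L/k)$ rather than the profinite Galois group. This causes no trouble: the flabby resolutions above are already $G$-lattices, and the choice of splitting field does not matter because inflation along $\Gal(L'/k)\rightarrow\Gal(L/k)$ preserves permutation, flabby and invertible lattices and the relevant exact sequences. This compatibility I would check briefly, and not belabour.
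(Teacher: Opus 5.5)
Your proposal is correct and matches the paper, which gives no argument of its own and only cites Lorenz; your reduction is the standard argument behind those cited results. You take the Voskresenskii/Endo--Miyata and Saltman (Merkurjev) criteria, phrased via the flabby class $[X^{*}(T)]^{\fl}$ of a smooth compactification, as black boxes, and then correctly verify the elementary dictionary (quasi-permutation $\Leftrightarrow [M]^{\fl}=0$, quasi-invertible $\Leftrightarrow [M]^{\fl}$ invertible, using $M'=\Ker(Q\twoheadrightarrow F')$ for the inverse class), with the change of splitting field via inflation being harmless as you say.
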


\section{Multinorm one tori and their character groups}\label{sect:mnot}

Here, we recall the theory on $I_{G/\cH}^{(\varphi)}$ and $J_{G/\cH}^{(\varphi)}$ in \cite[\S 3]{Hasegawa}. Let $H$ be a subgroup of $G$. Then, one has a surjection
\begin{equation*}
    \varepsilon_{G/H}\colon \Z[G/H]\rightarrow \Z;\,\sum_{g\in G/H}a_{g}g\mapsto \sum_{g\in G/H}a_{g}. 
\end{equation*}
For a multiset $\cH$ of subgroups of $G$, we define a $G$-module $I_{G/\cH}$ by an exact sequence
\begin{equation*}
    0\rightarrow I_{G/\cH}\rightarrow \bigoplus_{H\in \cH}\Z[G/H]\xrightarrow{(\varepsilon_{G/H})_{H\in \cH}} \Z \rightarrow 0. 
\end{equation*}
Furthermore, we define $J_{G/\cH}:=I_{G/\cH}^{\circ}$. 

\begin{prop}[{\cite[Proposition 3.1]{Hasegawa}}]\label{prop:coch}
Let $k$ be a field, $\bK=\prod_{i=1}^{r}K_{i}$ an {\'e}tale algebra over $k$. Take a finite Galois extension $L$ of $k$ containing $K_{1},\ldots,K_{r}$. Put $G:=\Gal(L/k)$ and $\cH:=\{\Gal(L/K_{i})\mid i\in \{1,\ldots,r\}\}$. Then there is an isomorphism of $G$-modules
\begin{equation*}
    X_{*}(T_{\bK/k})\cong I_{G/\cH},\quad X^{*}(T_{\bK/k})\cong J_{G/\cH}. 
\end{equation*}
In particular, $T_{\bK/k}$ is stably (resp.~retract) rational over $k$ if and only if the $G$-lattice $J_{G/\cH}$ is quasi-permutation (resp.~quasi-invertible). 
\end{prop}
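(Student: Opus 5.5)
The plan is to compute the character and cocharacter modules of the three tori in the defining sequence of $T_{\bK/k}$, and then read off $X_{*}$ and $X^{*}$ by exactness. Put $H_{i}:=\Gal(L/K_{i})$.

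First I would recall the standard description of Weil restrictions. Since $K_{i}\subset L$, the torus $\Res_{K_{i}/k}\G_{m}$ splits over $L$. Its character module is $\Z[G/H_{i}]$, with basis indexed by the $k$-embeddings $K_{i}\hookrightarrow L$, i.e.\ by $G/H_{i}$. Explicitly, $(\Res_{K_{i}/k}\G_{m})\otimes_{k}L\cong\prod_{gH_{i}\in G/H_{i}}\G_{m,L}$ via $x\otimes a\mapsto (g(x)a)_{g}$, and $G$ permutes the factors through its left action on $G/H_{i}$. Taking the product over $i$ gives
\begin{equation*}
X^{*}(\Res_{\bK/k}\G_{m})\cong X_{*}(\Res_{\bK/k}\G_{m})\cong \bigoplus_{i=1}^{r}\Z[G/H_{i}].
\end{equation*}
Here the character and cocharacter bases are dual to each other. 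This identification is $G$-equivariant because a permutation lattice is self-dual under the duality defined in the Notation.

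Next I would identify the norm. On $L$-points it is $\N_{\bK/k}(x)=\prod_{i}\prod_{g\in G/H_{i}}g(x_{i})$, i.e.\ the product of all coordinates of $\prod_{i}\prod_{g}\G_{m,L}$. Hence the induced map on cocharacters $\bigoplus_{i}\Z[G/H_{i}]\to X_{*}(\G_{m})=\Z$ sends every basis element to $1$; this is $(\varepsilon_{G/H_{i}})_{i}$. Dually, the map on characters $\Z\to\bigoplus_{i}\Z[G/H_{i}]$ sends $1$ to the sum of all basis elements, which is the transpose of the augmentation.

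Then I would use exactness. The sequence $1\to T_{\bK/k}\to\Res_{\bK/k}\G_{m}\to\G_{m}\to 1$ is exact, and the norm is surjective as a morphism of tori. The functor $X_{*}$ is left exact on tori, so $X_{*}(T_{\bK/k})=\Ker\bigl((\varepsilon_{G/H_{i}})_{i}\bigr)=I_{G/\cH}$. One point needs a check here: $T_{\bK/k}$ is actually a torus, i.e.\ connected. This holds because the character sequence
\begin{equation*}
0\to\Z\to\bigoplus_{i}\Z[G/H_{i}]\to X^{*}(T_{\bK/k})\to 0
\end{equation*}
has torsion-free cokernel, since the image of $1$ is primitive. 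The anti-equivalence between groups of multiplicative type and Galois modules then gives that $X^{*}(T_{\bK/k})$ is this cokernel. Dualizing the defining sequence of $I_{G/\cH}$ identifies this cokernel with $I_{G/\cH}^{\circ}=J_{G/\cH}$; alternatively one uses the perfect pairing $X^{*}\times X_{*}\to\Z$, which gives $X^{*}(T)\cong X_{*}(T)^{\circ}$ directly.

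Finally, the last assertion follows immediately from Proposition \ref{prop:rtiv} applied to $T=T_{\bK/k}$, which splits over $L$. The only step needing genuine care is the $G$-equivariance of the identification with $\Z[G/H_{i}]$: one must keep track of the left action and of the inverse in the dual action, so that $\Z[G/H]^{\circ}\cong\Z[G/H]$ and the transpose of $\varepsilon_{G/H}$ is the norm-element map. I expect this to be routine bookkeeping rather than a real obstacle.
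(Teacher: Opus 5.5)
Your argument is correct and is the standard proof: you identify $X^{*}$ and $X_{*}$ of $\Res_{\bK/k}\G_{m}$ with $\bigoplus_{i}\Z[G/H_{i}]$, read the norm as the augmentation on cocharacters and as the norm element on characters, take the kernel (respectively the torsion-free cokernel, which also shows $T_{\bK/k}$ is connected), and then apply Proposition \ref{prop:rtiv}. The paper does not reprove this statement; it imports it from \cite[Proposition 3.1]{Hasegawa}, and your route is the standard computation behind that result.
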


In this paper, we mainly determine whether the $G$-lattice $J_{G/\cH}$ is quasi-permutation or quasi-invertible in order to assess the rationality problem for multinorm one tori.

For a multiset $\cH$ of subgroups of $G$, we denote by $N^{G}(\cH)$ the maximal normal subgroup of $G$ which is contained in $H$ for all $H\in \cH$. Moreover, we simply denote $N^{G}(\cH)$ by $N^{G}(H)$ if $\cH$ consists of a single subgroup $H$. It turns out that the following holds without difficulty. 

According to \cite{Hasegawa}, we use the following notation for a multiset $\cH$ of subgroups in a finite group $G$. 
\begin{itemize}
\item Denote by $\cH^{\set}$ the underlying set of $\cH$. 
\item For $H\in \cH^{\set}$, write $m_{\cH}(H)$ for the multiplicity of $H$ in $\cH$. 
\end{itemize}
We denote by $\Delta$ the disjoint union of $\Delta_{m}$ for all positive integers $m$, where
\begin{equation*}
\Delta_{m}:=\{(d_{1},\ldots,d_{m})\in (\Zpn)^{m}\mid d_{1}\leq \cdots \leq d_{m}\}. 
\end{equation*}
For $\bd \in \Delta_{m}$ and $i\in \{1,\ldots,m\}$, we denote by $\bd_{i}$ the $i$-th factor of $\bd$. 

\begin{dfn}[{\cite[Definition 3.3]{Hasegawa}}]\label{dfn:dfdp}
Let $G$ be a finite group, $\cH$ a multiset of its subgroups. 
\begin{enumerate}
\item We define a weight function $d_{\varphi}$ on $\cH^{\set}$ as follows: 
\begin{equation*}
d_{\varphi}\colon \cH^{\set} \rightarrow \Delta_{1};\,H \mapsto \gcd(\varphi(H)_{1},\ldots,\varphi(H)_{m_{\cH}(H)}). 
\end{equation*}
\item We say that $\varphi$ is \emph{normalized} if $\gcd(d_{\varphi}(H)\mid H\in \cH^{\set})=1$. 
\end{enumerate}
\end{dfn}

\begin{lem}[{\cite[Lemma 3.4]{Hasegawa}}]\label{lem:pnor}
Let $G$ be a finite group, $\cH$ a multiset of its subgroups, and $\varphi$ a weight function on $\cH$. Put
\begin{equation*}
\varphi^{\nor} \colon \cH^{\set} \rightarrow \Delta;\,H \mapsto (d^{-1}\varphi(H)_{1},\ldots,d^{-1}\varphi(H)_{m_{\cH}(H)}),
\end{equation*}
where $d:=\gcd(d_{\varphi}(H)\mid H\in \cH^{\set})$. Then $\varphi^{\nor}$ is a normalized weight function on $\cH$. 
\end{lem}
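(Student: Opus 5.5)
The plan is a direct verification from Definition \ref{dfn:dfdp}. I take a weight function on $\cH$ to mean a map $\varphi\colon \cH^{\set}\rightarrow \Delta$ with $\varphi(H)\in \Delta_{m_{\cH}(H)}$ for every $H\in \cH^{\set}$. There are two things to check. First, $\varphi^{\nor}$ is again a weight function on $\cH$. Second, the gcd of its values $d_{\varphi^{\nor}}(H)$ is $1$.

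For the first point, fix $H\in \cH^{\set}$ and put $m:=m_{\cH}(H)$. By definition $d_{\varphi}(H)=\gcd(\varphi(H)_{1},\ldots,\varphi(H)_{m})$, so $d_{\varphi}(H)$ divides every entry $\varphi(H)_{i}$. Also $d=\gcd(d_{\varphi}(H')\mid H'\in \cH^{\set})$ divides $d_{\varphi}(H)$. Hence $d$ divides each $\varphi(H)_{i}$, and so each $d^{-1}\varphi(H)_{i}$ is a positive integer. Dividing by the positive number $d$ preserves the inequalities $\varphi(H)_{1}\leq\cdots\leq\varphi(H)_{m}$. The tuple still has length $m$, so $\varphi^{\nor}(H)\in \Delta_{m_{\cH}(H)}$, and $\varphi^{\nor}$ is a weight function on $\cH$.

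For the second point, I will use homogeneity of the gcd: $\gcd(a_{1}/c,\ldots,a_{m}/c)=\gcd(a_{1},\ldots,a_{m})/c$ whenever the positive integer $c$ divides every $a_{i}$. Applied entrywise, this gives
\begin{equation*}
d_{\varphi^{\nor}}(H)=d^{-1}d_{\varphi}(H)\quad\text{for all } H\in\cH^{\set}.
\end{equation*}
Applying homogeneity once more,
\begin{equation*}
\gcd\bigl(d_{\varphi^{\nor}}(H)\mid H\in\cH^{\set}\bigr)=d^{-1}\gcd\bigl(d_{\varphi}(H)\mid H\in \cH^{\set}\bigr)=d^{-1}d=1.
\end{equation*}
So $\varphi^{\nor}$ is normalized.

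There is no real obstacle here. The only point needing care is that $d$ divides every individual entry $\varphi(H)_{i}$, not just the gcds $d_{\varphi}(H)$. This is what guarantees that $\varphi^{\nor}$ takes values in $\Delta$, and it follows from transitivity of divisibility as above.
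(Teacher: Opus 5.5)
Your proof is correct: showing that $d$ divides every entry $\varphi(H)_{i}$ (so $\varphi^{\nor}(H)\in\Delta_{m_{\cH}(H)}$), then using homogeneity of the gcd to get $d_{\varphi^{\nor}}(H)=d^{-1}d_{\varphi}(H)$ and hence total gcd $1$, is the whole content of the statement. The paper gives no proof and only cites \cite[Lemma 3.4]{Hasegawa}; your direct check is the natural argument and matches what that citation amounts to.
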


\begin{dfn}[{\cite[Definition 3.5]{Hasegawa}}]\label{dfn:gnmn}
Let $G$ be a finite group, and $\cH$ a multiset of its subgroups. Consider a weight function $\varphi$ of $\cH$. We define a $G$-lattice $I_{G/\cH}^{(\varphi)}$ by the exact sequence
\begin{equation}\label{eq:ipdf}
0\rightarrow I_{G/\cH}^{(\varphi)}\rightarrow \bigoplus_{H\in \cH^{\set}}\Z[G/H]^{\oplus m_{\cH}(H)}\xrightarrow{(\varphi(H)_{1}\cdot \varepsilon_{G/H},\ldots,\varphi(H)_{m_{\cH}(H)}\cdot \varepsilon_{G/H})_{H}} \Z. 
\end{equation}
Furthermore, set $J_{G/\cH}^{(\varphi)}:=(I_{G/\cH}^{(\varphi)})^{\circ}$. 
\end{dfn}

If $\varphi$ is normalized, then the rightmost homomorphism of \eqref{eq:ipdf} is surjective. Moreover, we have an exact sequence of $G$-lattices
\begin{equation*}
    0 \rightarrow \Z \xrightarrow{\varphi(H)\varepsilon_{G/H}^{\circ}}
    \bigoplus_{H\in \cH^{\set}}\Z[G/H]^{\oplus m_{\cH}(H)}\rightarrow 
    J_{G/\cH}^{(\varphi)}\rightarrow 0. 
\end{equation*}

\begin{lem}[{\cite[Lemma 3.7]{Hasegawa}}]\label{lem:dvci}
Let $G$ be a finite group, $\cH$ a multiset of its subgroups, and $\varphi$ a weight function on $\cH$. Then one has $I_{G/\cH}^{(\varphi)}=I_{G/\cH}^{(\varphi^{\nor})}$ and $J_{G/\cH}^{(\varphi)}=J_{G/\cH}^{(\varphi^{\nor})}$. 
\end{lem}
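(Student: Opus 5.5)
Since $\varphi$ and $\varphi^{\nor}$ have the same underlying multiset $\cH$, the claim is that the two lattices coincide as sublattices of $P:=\bigoplus_{H\in \cH^{\set}}\Z[G/H]^{\oplus m_{\cH}(H)}$. I will prove $I_{G/\cH}^{(\varphi)}=I_{G/\cH}^{(\varphi^{\nor})}$ as subsets of $P$ directly from the defining sequence \eqref{eq:ipdf}, and then obtain the statement for $J$ by dualizing.

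Put $d:=\gcd(d_{\varphi}(H)\mid H\in \cH^{\set})$. By Definition \ref{dfn:dfdp}, $d$ divides every entry $\varphi(H)_{j}$, so $\varphi^{\nor}(H)_{j}=d^{-1}\varphi(H)_{j}$ is an integer; this is the content of Lemma \ref{lem:pnor}. Write $\epsilon^{(\varphi)}\colon P\to\Z$ for the rightmost map in \eqref{eq:ipdf}, i.e. $x=(x_{H,j})\mapsto \sum_{H,j}\varphi(H)_{j}\,\varepsilon_{G/H}(x_{H,j})$. Linearity then gives $\epsilon^{(\varphi)}=d\cdot \epsilon^{(\varphi^{\nor})}$. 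Because $\Z$ is torsion-free and $d\neq 0$, multiplication by $d$ on $\Z$ is injective, so the two maps have the same kernel. Hence $I_{G/\cH}^{(\varphi)}=\Ker\epsilon^{(\varphi)}=\Ker\epsilon^{(\varphi^{\nor})}=I_{G/\cH}^{(\varphi^{\nor})}$, as $G$-submodules of $P$.

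For the second equality, note that $J_{G/\cH}^{(\varphi)}$ is defined as $(I_{G/\cH}^{(\varphi)})^{\circ}=\Hom_{\Z}(I_{G/\cH}^{(\varphi)},\Z)$ with the contragredient action. Since the two $I$'s are literally the same $G$-lattice, their duals are equal. If one prefers to regard $J$ as the quotient $P^{\circ}/\Z$ via the exact sequence after Definition \ref{dfn:gnmn}, the same conclusion holds. Indeed, the image of $\Z$ under $\varphi(H)\varepsilon_{G/H}^{\circ}$ is $d$ times the image under $\varphi^{\nor}(H)\varepsilon_{G/H}^{\circ}$. Its saturation in $P^{\circ}$ is the annihilator of $I_{G/\cH}^{(\varphi)}$, and the quotient by this saturation is identified with the dual of $I$. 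This identification needs care only in the non-normalized case, where the quotient by the unsaturated image would have torsion; I take the definition via duals, so no issue arises.

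The argument is essentially bookkeeping. The only point needing attention is to make sure both lattices are read as subobjects of the same ambient $P$, so that "$=$" means literal equality rather than mere isomorphism.
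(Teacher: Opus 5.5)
Your proof is correct: every entry of $\varphi$ is divisible by $d$, so the map $P\to\Z$ for $\varphi$ is $d$ times the one for $\varphi^{\nor}$. The two kernels therefore coincide as $G$-sublattices of the same permutation lattice, and so do their duals. The paper gives no argument of its own and simply imports the statement from \cite[Lemma 3.7]{Hasegawa}; your direct kernel comparison is the natural argument for it, and your side remark about saturation for the quotient description of $J$ is also accurate.
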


The following is a generalization of \cite[Proposition 1.3]{Endo2011}. 

\begin{lem}[{\cite[Lemma 3.10]{Hasegawa}}]\label{lem:rded}
Let $G$ be a finite group, $\cH$ a multiset of its subgroups, and $\varphi$ a weight function on $\cH$. Assume that there exist $H_{0},H'_{0}\in \cH$ and $i_{0},i'_{0}\in \{1,\ldots,m_{\cH}(H)\}$ such that
\begin{itemize}
\item $H_{0}\subset H'_{0}$; and
\item ${\varphi}(H_{0})_{i_{0}}\in {\varphi}(H'_{0})_{i'_{0}}\Z$. 
\end{itemize}
We denote by $\cH'$ the multiset of subgroups of $G$ that satisfies the following for every subgroup $H$ of $G$: 
\begin{equation*}
m_{\cH'}(H')=
\begin{cases}
    m_{\cH}(H')&\text{if }H'\in \cH^{\set}\setminus \{H_0\};\\
    m_{\cH}(H_{0})-1&\text{if }H'=H_{0}. 
\end{cases}
\end{equation*}
Furthermore, we define a weight function $\varphi'$ of $\cH'$ as 
\begin{equation*}
\varphi'(H')=
\begin{cases}
    \varphi(H')&\text{if }H'\in \cH^{\set}\setminus \{H_0\};\\
    (\varphi(H_{0})_{i})_{i\in \{1,\ldots,m_{\cH}(H)\}\setminus \{i_{0}\}}&\text{if }H'=H_{0}. 
\end{cases}
\end{equation*}
Then there exist isomorphisms of $G$-lattices
\begin{equation*}
I_{G/\cH}^{(\varphi)} \cong I_{G/\cH'}^{({\varphi'})} \oplus \Z[G/H_{0}],\quad
J_{G/\cH}^{(\varphi)} \cong J_{G/\cH'}^{({\varphi'})} \oplus \Z[G/H_{0}]. 
\end{equation*}
\end{lem}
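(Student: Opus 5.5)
We prove Lemma \ref{lem:rded} by writing down an explicit change of basis of the middle permutation lattice. The plan is to realize the defining map of $I^{(\varphi)}_{G/\cH}$ on a new decomposition in which one copy of $\Z[G/H_{0}]$ lies in the kernel. The statement for $J$ then follows by dualizing, since $\Z[G/H_{0}]^{\circ}\cong\Z[G/H_{0}]$. Throughout, write $P:=\bigoplus_{H\in\cH^{\set}}\Z[G/H]^{\oplus m_{\cH}(H)}$, and let $\epsilon\colon P\to\Z$ be the map in \eqref{eq:ipdf}. We denote by $e_{0}$ the summand $\Z[G/H_{0}]$ indexed by $i_{0}$, and by $e'_{0}$ the summand $\Z[G/H'_{0}]$ indexed by $i'_{0}$. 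Put $c:=\varphi(H_{0})_{i_{0}}/\varphi(H'_{0})_{i'_{0}}\in\Z$.

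Since $H_{0}\subset H'_{0}$, there is a natural $G$-equivariant projection
\[
\pi\colon\Z[G/H_{0}]\to\Z[G/H'_{0}],\qquad gH_{0}\mapsto gH'_{0},
\]
and it satisfies $\varepsilon_{G/H'_{0}}\circ\pi=\varepsilon_{G/H_{0}}$. Define a $G$-endomorphism $\Phi$ of $P$ which is the identity on every summand except $e_{0}$. On $e_{0}$ it sends $x$ to $x-c\,\pi(x)$, where the second term lands in $e'_{0}$. The map $\Phi$ is unipotent: it is the identity plus a nilpotent map from $e_{0}$ to $e'_{0}$. These are distinct summands, even when $H_{0}=H'_{0}$, provided $(H_{0},i_{0})\neq(H'_{0},i'_{0})$, which is implicit in the statement. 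Hence $\Phi$ is an automorphism of $G$-lattices.

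Now compute $\epsilon\circ\Phi$. On $e_{0}$ it is
\[
\varphi(H_{0})_{i_{0}}\varepsilon_{G/H_{0}}-c\,\varphi(H'_{0})_{i'_{0}}\,\varepsilon_{G/H'_{0}}\circ\pi=0,
\]
and it agrees with $\epsilon$ on all other summands. Therefore $\Ker(\epsilon\circ\Phi)=\Ker(\epsilon|_{P'})\oplus e_{0}$, where $P'$ is the sum of the remaining summands. The restriction $\epsilon|_{P'}$ is exactly the map defining $I^{(\varphi')}_{G/\cH'}$. Since $\Phi$ maps $\Ker(\epsilon\circ\Phi)$ isomorphically onto $\Ker\epsilon$, we obtain
\[
I^{(\varphi)}_{G/\cH}\cong I^{(\varphi')}_{G/\cH'}\oplus\Z[G/H_{0}].
\]
Dualizing gives the isomorphism for $J$.

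The only point requiring care is bookkeeping rather than an obstacle. First, the kernel is taken in \eqref{eq:ipdf} without assuming surjectivity, so no normalization issue arises. Second, one must check that $\Phi$ is bijective over $\Z$ and not merely over $\Q$; unipotence settles this.
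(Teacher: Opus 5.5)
Your proof is correct and complete. The unipotent shear $x\mapsto x-c\,\pi(x)$ from the $(H_0,i_0)$-summand into the $(H'_0,i'_0)$-summand is a $G$-automorphism of the middle permutation lattice that kills the $(H_0,i_0)$-weight, so $\Z[G/H_0]$ splits off the kernel, and dualizing (permutation lattices are self-dual) gives the $J$-version. You were right to state the tacit hypothesis $(H_0,i_0)\neq(H'_0,i'_0)$ explicitly, since without it the conclusion fails: for $G=C_2$ and $\cH=\{\{1\},G\}$ it would split $\Z$ off $I_{G/\cH}\cong\Z[G]$, which is impossible.

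The paper has no proof of its own to compare with. It only quotes the lemma from \cite[Lemma 3.10]{Hasegawa} as a generalization of \cite[Proposition 1.3]{Endo2011}, so your standard change-of-basis argument supplies the omitted proof.
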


\begin{dfn}[{\cite[Definition 3.12]{Hasegawa}}]\label{dfn:reds}
Let $G$ be a finite group, and $\cH$ a set of its subgroups (that is, all elements in $\cH$ have multiplicity $1$). We say that $\cH$ is \emph{reduced} if $H\not\subset H'$ for any $H,H'\in \cH$. 
\end{dfn}

For a multiset $\cH$ of subgroups of a finite group $G$, we denote by $\cH^{\red}$ the subset of $\cH^{\set}$ consisting of all elements of $\cH^{\set}$ that are maximal with respect to inclusion. Note that it is reduced in the sense of Definition \ref{dfn:reds}. 

\begin{prop}[{\cite[Corollary 3.13 (ii)]{Hasegawa}}]\label{prop:rdrd}
Let $G$ be a finite group, and $\cH$ a multiset of its subgroups. Then, the following hold: 
\begin{equation*}
    [I_{G/\cH}]=[I_{G/\cH^{\red}}],\quad [J_{G/\cH}]=[J_{G/\cH^{\red}}]. 
\end{equation*}
In particular, the $G$-lattice $J_{G/\cH}$ is quasi-permutation (resp.~quasi-invertible) if and only if $J_{G/\cH^{\red}}$ is so. 
\end{prop}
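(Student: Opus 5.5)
The plan is to compare $I_{G/\cH}$ with $I_{G/\cH^{\red}}$ by applying Lemma \ref{lem:rded} repeatedly; it is the only tool available that removes one element from a multiset while splitting off a permutation summand. Throughout, $\varphi$ is the trivial weight function, with all entries equal to $1$, so that $I_{G/\cH}^{(\varphi)}=I_{G/\cH}$. With every weight equal to $1$, the divisibility hypothesis $\varphi(H_{0})_{i_{0}}\in \varphi(H'_{0})_{i'_{0}}\Z$ of Lemma \ref{lem:rded} holds automatically. Consequently, whenever $\cH$ contains two entries $H_{0}\subset H'_{0}$, we may delete one copy of $H_{0}$ and split off a summand $\Z[G/H_{0}]$.

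The key subtlety is that the two entries must be distinct members of the multiset, counted with multiplicity. There are two situations in which this is so.
\begin{enumerate}
\item $H_{0}\in\cH^{\set}$ has $m_{\cH}(H_{0})\ge 2$. Take $H'_{0}=H_{0}$ using another copy.
\item $H_{0}\in\cH^{\set}\setminus\cH^{\red}$. Then $H_{0}\subsetneq H'_{0}$ for some $H'_{0}\in\cH^{\set}$.
\end{enumerate}
After each deletion the weight function remains constantly $1$, so Lemma \ref{lem:rded} can be applied again. I would induct on $\#\cH$, counted with multiplicity, minus $\#\cH^{\red}$.

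At each step the non-maximal elements still have a strictly larger element available. The maximal elements of $\cH^{\set}$ are never deleted except down to multiplicity $1$, so the set of maximal elements is unchanged. The process therefore terminates exactly at $\cH^{\red}$, each element with multiplicity one, and gives
\begin{equation*}
I_{G/\cH}\cong I_{G/\cH^{\red}}\oplus \bigoplus_{j}\Z[G/H_{j}].
\end{equation*}
Dualizing with $\circ$, and using $\Z[G/H]^{\circ}\cong \Z[G/H]$, gives the same statement for $J$. This is also the $J$-part of Lemma \ref{lem:rded}. Hence $[I_{G/\cH}]=[I_{G/\cH^{\red}}]$ and $[J_{G/\cH}]=[J_{G/\cH^{\red}}]$ in $\sS(G)$.

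For the final assertion, use that both properties depend only on the similarity class. Quasi-permutation is equivalent to $[M]^{\fl}$ being the class of a stably permutation lattice, i.e.\ $[M]^{\fl}=0$ in the usual theory. More elementarily, if $M\oplus P$ is quasi-permutation with $P$ permutation, then $M$ is too: from $0\to M\oplus P\to R\to F\to 0$ one gets $0\to M\to R/P\to F\to0$, although $R/P$ need not be permutation. So I would instead argue directly. If $0\to M\to R\to F\to0$, then $0\to M\oplus P\to R\oplus P\to F\to 0$, which handles one direction. For the converse, use the standard fact from \cite{Lorenz2005} or \cite{Hasegawa} that quasi-permutation, and likewise quasi-invertibility, is a property of the class in $\sS(G)$. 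For quasi-invertibility this is immediate, since being a direct summand of a quasi-permutation lattice passes both ways between $M$ and $M\oplus P$.

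The main obstacle is purely bookkeeping: checking that the hypotheses of Lemma \ref{lem:rded} are met with distinct entries at each step, and that the iteration ends exactly at $\cH^{\red}$.
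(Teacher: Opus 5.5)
Your proof is correct and is essentially the argument behind the cited \cite[Corollary 3.13]{Hasegawa}. You iterate the deletion Lemma \ref{lem:rded} with constant weight $1$, correctly insisting on two distinct entries of the multiset, until only $\cH^{\red}$ with multiplicity one remains; this gives $I_{G/\cH}\cong I_{G/\cH^{\red}}\oplus P$ with $P$ permutation, and you then dualize. For the final assertion, drop the abandoned $R/P$ detour and use what you already noted: $M$ is quasi-permutation iff $[M]^{\fl}=0$, and $[M\oplus P]^{\fl}=[M]^{\fl}+[P]^{\fl}=[M]^{\fl}$ because $0\to P\to P\to 0\to 0$ is a flabby resolution; quasi-invertibility transfers directly, as you say.
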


\begin{prop}[{\cite[Proposition 3.20]{Hasegawa}}]\label{prop:conj}
Let $G$ be a finite group, and $\cH$ a multiset of its subgroups. Take $H_{0}\in \cH$ and $g\in G$. Consider a multiset $\cH'$ of subgroups of $G$ which satisfies the following for any subgroup $H$ of $G$: 
\begin{equation*}
m_{\cH'}(H)=
\begin{cases}
m_{\cH}(gH_{0}g^{-1})+1&\text{if }H=gH_{0}g^{-1}; \\
m_{\cH}(H_{0})-1&\text{if }H=H_{0};\\
m_{\cH}(H)&\text{otherwise}. 
\end{cases}
\end{equation*}
Then there exist isomorphisms of $G$-lattices
\begin{equation*}
    I_{G/\cH}\cong I_{G/\cH'},\quad J_{G/\cH}\cong J_{G/\cH'}. 
\end{equation*}
\end{prop}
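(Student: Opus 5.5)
The plan is to build an explicit $G$-isomorphism between the middle terms of the defining exact sequences that commutes with the augmentation maps. The isomorphism for $I$ then follows by restricting to kernels, and the one for $J$ by dualizing.

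If $gH_{0}g^{-1}=H_{0}$, then $\cH'=\cH$ and there is nothing to prove, so assume $gH_{0}g^{-1}\neq H_{0}$ and put $H_{0}':=gH_{0}g^{-1}$. The key step is to define
\begin{equation*}
\psi\colon \Z[G/H_{0}]\rightarrow \Z[G/H_{0}'];\quad xH_{0}\mapsto xg^{-1}H_{0}' = xH_{0}g^{-1}.
\end{equation*}
Three things need checking, each a one-line computation.
\begin{enumerate}
\item $\psi$ is well defined: replacing $x$ by $xh$ with $h\in H_{0}$ gives $xhg^{-1}H_{0}'=xg^{-1}(ghg^{-1})H_{0}'=xg^{-1}H_{0}'$.
\item $\psi$ is $G$-equivariant, because the $G$-action is left multiplication and $\psi$ is right multiplication by $g^{-1}$ on cosets.
\item $\psi$ is bijective on the permutation bases, with inverse $yH_{0}'\mapsto ygH_{0}$.
\end{enumerate}
Since $\psi$ sends basis elements to basis elements, it also satisfies $\varepsilon_{G/H_{0}'}\circ\psi=\varepsilon_{G/H_{0}}$.

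Next I assemble these pieces. The multisets $\cH$ and $\cH'$ differ only in that one copy of $H_{0}$ is replaced by one copy of $H_{0}'$. Hence
\begin{equation*}
\bigoplus_{H\in\cH}\Z[G/H]\cong\bigoplus_{H\in\cH'}\Z[G/H],
\end{equation*}
using $\psi$ on the distinguished copy of $\Z[G/H_{0}]$ and the identity on all remaining summands, after matching multiplicities. By the compatibility with augmentations, this isomorphism commutes with the maps $(\varepsilon_{G/H})_{H}$ to $\Z$. Restricting it to the kernels gives $I_{G/\cH}\cong I_{G/\cH'}$ via the defining exact sequences. Applying $(-)^{\circ}$, which is functorial for $G$-lattice isomorphisms, then gives $J_{G/\cH}\cong J_{G/\cH'}$.

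There is no real obstacle. The only point needing care is the direction of the coset map, namely right multiplication by $g^{-1}$, so that it is simultaneously well defined and left $G$-equivariant; the bookkeeping of multiplicities is routine.
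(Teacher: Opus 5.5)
Your proof is correct: the map $xH_{0}\mapsto xg^{-1}(gH_{0}g^{-1})$ is an isomorphism of $G$-sets $G/H_{0}\cong G/gH_{0}g^{-1}$, so it induces a basis-preserving $G$-isomorphism of permutation lattices that is compatible with the augmentations. Restricting to kernels and then dualizing gives both isomorphisms, and reading the degenerate case $gH_{0}g^{-1}=H_{0}$ as $\cH'=\cH$ is the sensible interpretation of the statement. The paper gives no proof here and only quotes the result from the authors' earlier work; your argument is the standard one behind it.
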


\begin{dfn}[{\cite[Definition 3.15]{Hasegawa}}]
Let $G$ be a finite group. We say that a set of subgroups $\cH$ of $G$ is \emph{strongly reduced} if $H \not\subset gH'g^{-1}$ for any $H,H'\in \cH$ with $H\neq H'$ as subgroups of $G$ and any $g\in G$. 
\end{dfn}

For a multiset $\cH$ of subgroups of a finite group $G$, we denote by $\cH^{\srd}$ a subset of $\cH^{\set}$ that is strongly reduced and maximal with respect to inclusion. Note that the subset $\cH^{\srd}$ is not uniquely determined. However, we have $\cH^{\srd}\subset \cH^{\red}$ by definition. 

\begin{prop}[{\cite[Proposition 3.16]{Hasegawa}}]\label{prop:rdsr}
Let $G$ be a finite group, and $\cH$ a multiset of its subgroups. Then there exist isomorphisms of $G$-lattices
\begin{gather*}
    I_{G/\cH}\cong I_{G/\cH^{\srd}}\oplus \left(\bigoplus_{H\in \cH^{\srd}}\Z[G/H]^{\oplus m_{\cH}(H)-1}\right) \oplus \left(\bigoplus_{H\in \cH^{\set}\setminus \cH^{\srd}}\Z[G/H]^{\oplus m_{\cH}(H)}\right),\\ 
    J_{G/\cH}\cong J_{G/\cH^{\srd}}\oplus \left(\bigoplus_{H\in \cH^{\srd}}\Z[G/H]^{\oplus m_{\cH}(H)-1}\right) \oplus \left(\bigoplus_{H\in \cH^{\set}\setminus \cH^{\srd}}\Z[G/H]^{\oplus m_{\cH}(H)}\right). 
\end{gather*}
\end{prop}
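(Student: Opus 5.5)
The strategy is to peel off permutation summands one subgroup at a time, using Lemma \ref{lem:rded} with the trivial weight function $\varphi\equiv 1$ (so its divisibility hypothesis $\varphi(H_{0})_{i_{0}}\in\varphi(H'_{0})_{i'_{0}}\Z$ holds automatically), and using Proposition \ref{prop:conj} to move subgroups within their conjugacy classes. I will argue by induction on the total multiplicity $\sum_{H\in\cH^{\set}}m_{\cH}(H)$. Only the statement for $I_{G/\cH}$ needs proof: dualizing gives the one for $J_{G/\cH}$, because $\Z[G/H]^{\circ}\cong\Z[G/H]$.

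\emph{Step 1: extra multiplicities.} Let $H\in\cH^{\srd}$ with $m_{\cH}(H)\geq 2$. Apply Lemma \ref{lem:rded} with $H_{0}=H'_{0}=H$ and two different copies $i_{0}\neq i'_{0}$. This removes one copy of $H$ and splits off $\Z[G/H]$. Iterating gives the summand $\Z[G/H]^{\oplus m_{\cH}(H)-1}$ and leaves exactly one copy of each $H\in\cH^{\srd}$.

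\emph{Step 2: elements outside $\cH^{\srd}$.} Let $H\in\cH^{\set}\setminus\cH^{\srd}$. By maximality of $\cH^{\srd}$, the set $\cH^{\srd}\cup\{H\}$ is not strongly reduced. So there are $H'\in\cH^{\srd}$ and $g\in G$ with $H\subset gH'g^{-1}$ or $H'\subset gHg^{-1}$.
\begin{itemize}
\item Suppose $H\subset gH'g^{-1}$. First use Proposition \ref{prop:conj} to replace the copy of $H'$ by $gH'g^{-1}$; this does not change $I_{G/\cH}$ up to isomorphism. Then apply Lemma \ref{lem:rded} with $H_{0}=H$ and $H'_{0}=gH'g^{-1}$ once for each of the $m_{\cH}(H)$ copies of $H$. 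This splits off $\Z[G/H]^{\oplus m_{\cH}(H)}$ and removes $H$ from the multiset. Finally, use Proposition \ref{prop:conj} again to conjugate $gH'g^{-1}$ back to $H'$.
\item Doing this for every $H\in\cH^{\set}\setminus\cH^{\srd}$, one at a time, uses only copies of elements of $\cH^{\srd}$, and those are never removed. At the end the remaining multiset is exactly $\cH^{\srd}$, which yields the stated decomposition.
\end{itemize}

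\emph{Main obstacle: the reverse case.} The difficult case is $H'\subsetneq gHg^{-1}$ with $H'\in\cH^{\srd}$ and $H\notin\cH^{\srd}$. Here Lemma \ref{lem:rded} only removes the smaller group $H'$ and splits off $\Z[G/H']$. But the statement asks for $H'$ to stay and for $\Z[G/H]$ to be split off. Removing the larger group would require a $G$-map $\Z[G/H]\to\Z[G/H']$ compatible with augmentations, that is, an $H$-fixed element of $\Z[G/H']$ of augmentation $1$. Such an element need not exist, since the natural orbit sums have augmentation divisible by $[H:H']$.

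My first attempt will be to show that a maximal strongly reduced subset can be chosen, or is forced, to consist of conjugacy-maximal elements. For instance, I would build $\cH^{\srd}$ greedily from the elements of $\cH^{\red}$, taking conjugacy classes in decreasing order of $\#H$. With that choice every discarded $H$ satisfies the first alternative, so the reverse case never occurs. I would then state the result for such a choice, which is consistent with the remark $\cH^{\srd}\subset\cH^{\red}$. If some arbitrary maximal choice falls into the reverse case, I expect to have to restrict the statement to the greedy choice.
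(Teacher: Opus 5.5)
Your argument is correct and is essentially the intended one. You move subgroups within their conjugacy classes by Proposition \ref{prop:conj}, so that each discarded subgroup (and each surplus copy) lies inside a retained member of $\cH^{\srd}$. You then split off its permutation module by Lemma \ref{lem:rded} with $\varphi\equiv 1$, taking distinct copies when $H_{0}=H'_{0}$, and dualize.

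Your restriction to the greedy, conjugacy-maximal choice is necessary, not merely convenient. Take $G=D_{6}$ and $\cH=\{\langle\sigma_{6}^{3},\tau_{6}\rangle,\langle\sigma_{6}^{2}\tau_{6}\rangle\}$. Then $\{\langle\sigma_{6}^{2}\tau_{6}\rangle\}\subset\cH^{\red}$ is a maximal strongly reduced subset. Yet $H^{1}(\langle\sigma_{6}^{3},\tau_{6}\rangle,-)$ vanishes on $I_{G/\cH}$ and equals $\Z/2$ on the asserted right-hand side. So $\cH^{\srd}$ must be chosen so that every member of $\cH^{\set}$ is subconjugate to one of its elements. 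Your greedy construction guarantees this: if $H$ conflicts with an earlier-chosen $H'$ with $\#H'\geq\#H$, then $H\subset gH'g^{-1}$ for some $g\in G$.
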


\begin{prop}[{\cite[Proposition 3.20]{Hasegawa}}]\label{prop:rtmc}
Let $G$ be a finite group, and $P$ a subgroup of $G$. Consider a multiset of subgroups $\cH$ of $G$ and a weight function $\varphi$ on $\cH$. Then there are isomorphisms of $P$-modules
\begin{equation*}
    I_{G/\cH}^{(\varphi)}\cong I_{P/\cH_{P}}^{(\varphi_{P})},\quad J_{G/\cH}^{(\varphi)}\cong J_{P/\cH_{P}}^{(\varphi_{P})}. 
\end{equation*}
Here $\varphi_{P}$ is defined as follows:
\begin{itemize}
    \item $C_{H}$ is a complete representative of $P\backslash G/H$ in $G$; 
    \item $\cH_{P}$ the multiset of subgroups of $G$ consisting $P\cap gHg^{-1}$ for all $H\in \cH$ and $g \in C_{H}$; and 
    \item $\varphi_{P}$ is the weight function on $\cH_{P}$ which sends $H'\in \cH_{P}$ to the element of $\Delta$ defined by $\varphi(H)$ for all $H\in \cH$ with $H'=P\cap gHg^{-1}$ for some $g\in C_{H}$. 
\end{itemize}
In particular, $J_{P/\cH_{P}}^{(\varphi_{P})}$ is quasi-permutation (resp.~quasi-invertible) if $J_{G/\cH}^{(\varphi)}$ is so. 
\end{prop}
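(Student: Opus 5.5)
The plan is to restrict the defining exact sequence \eqref{eq:ipdf} of $I_{G/\cH}^{(\varphi)}$ to $P$ and decompose each permutation module by the Mackey formula. The result should be, term by term, the defining sequence of $I_{P/\cH_{P}}^{(\varphi_{P})}$.

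First, fix $H\in\cH^{\set}$ and the set $C_{H}$ of representatives of $P\backslash G/H$. As a $P$-set, $G/H$ is the disjoint union of the orbits $PgH/H$ for $g\in C_{H}$. The map $p\,gH\mapsto p(P\cap gHg^{-1})$ is a well-defined $P$-equivariant bijection $PgH/H\cong P/(P\cap gHg^{-1})$, because the stabilizer of $gH$ in $P$ is $P\cap gHg^{-1}$. This gives an isomorphism of $P$-lattices
\begin{equation*}
\Z[G/H]\cong\bigoplus_{g\in C_{H}}\Z[P/(P\cap gHg^{-1})].
\end{equation*}
Under this isomorphism, $\varepsilon_{G/H}$ corresponds to $\sum_{g\in C_{H}}\varepsilon_{P/(P\cap gHg^{-1})}$, since both send every basis element to $1$.

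Second, apply this to each of the $m_{\cH}(H)$ copies of $\Z[G/H]$. The copy carrying the weight $\varphi(H)_{i}$ splits into summands $\Z[P/(P\cap gHg^{-1})]$, and each summand again carries the weight $\varphi(H)_{i}$. Collecting all summands over all $H$, $i$ and $g$ gives exactly the multiset $\cH_{P}$. The weight attached to a subgroup $H'\in\cH_{P}^{\set}$ is then the sorted tuple of all the weights $\varphi(H)_{i}$ coming from pairs $(H,g)$ with $H'=P\cap gHg^{-1}$, which is the definition of $\varphi_{P}$. The only real care needed is this bookkeeping: different $H$, or different $g$, may yield the same subgroup of $P$, and the multiplicities and weight tuples must be merged accordingly. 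Once this is done, the restricted map $(\varphi(H)_{i}\varepsilon_{G/H})$ is identified with the map $(\varphi_{P}(H')_{j}\varepsilon_{P/H'})$ of Definition~\ref{dfn:gnmn} for $(P,\cH_{P},\varphi_{P})$.

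Third, take kernels. This gives $I_{G/\cH}^{(\varphi)}\cong I_{P/\cH_{P}}^{(\varphi_{P})}$ as $P$-lattices. Taking $\Z$-duals, which commutes with restriction to $P$ since the action on $M^{\circ}$ is defined pointwise, gives $J_{G/\cH}^{(\varphi)}\cong J_{P/\cH_{P}}^{(\varphi_{P})}$.

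Finally, the last assertion follows from Proposition~\ref{prop:rtrd}. If $J_{G/\cH}^{(\varphi)}$ is quasi-permutation (resp.~quasi-invertible) as a $G$-lattice, it is so as a $P$-lattice, and hence so is the isomorphic $P$-lattice $J_{P/\cH_{P}}^{(\varphi_{P})}$.

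I expect no serious obstacle beyond verifying the compatibility of the augmentations and the weight bookkeeping in the second step.
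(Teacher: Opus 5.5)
Your proposal is correct. It is the standard Mackey-decomposition argument behind the cited result \cite[Proposition 3.20]{Hasegawa}, which this paper quotes without reproducing a proof: restrict the defining sequence \eqref{eq:ipdf} to $P$, split each $\Z[G/H]$ into the orbits $\Z[P/(P\cap gHg^{-1})]$ for $g\in C_{H}$ (so $\varepsilon_{G/H}$ becomes the sum of the orbit augmentations), regroup the weights over pairs $(H,g)$, take kernels, dualize, and conclude with Proposition~\ref{prop:rtrd}.
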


\begin{prop}[{\cite[Proposition 3.22]{Hasegawa}}]\label{prop:tkfx}
Let $G$ be a finite group, and $N$ its normal subgroup. Take a multiset of subgroups $\cH$ of $G$ and a weight function $\varphi$ on $\cH$. Then there are isomorphisms
\begin{equation*}
(I_{G/\cH}^{(\varphi)})^{N}\cong I_{(G/N)/\cH^{N}}^{(\overline{\varphi}_{G/N}^{\nor})}, \quad 
(J_{G/\cH}^{(\varphi)})^{[N]}\cong J_{(G/N)/\cH^{N}}^{(\overline{\varphi}_{G/N}^{\nor})}. 
\end{equation*}
Here
\begin{itemize}
\item $\cH^{N}$ is the multiset of subgroups of $G/N$ consisting of $HN/N$ for all $H\in \cH$ (in particular, the multiplicity of $\overline{H}$ in $\cH^{N}$ is the sum of $m_{\cH}(H)$ for all $H\in \cH$ with $HN/N=\overline{H}$); and
\item the weight function $\overline{\varphi}_{G/N}$ of $\cH^{N}$ maps $\overline{H}\in \cH^{N}$ to the element of $\Delta$ defined by $(HN:H)\varphi(H)$ for all $H\in \cH$ with $HN/N=\overline{H}$. 
\end{itemize}
\end{prop}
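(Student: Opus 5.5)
The plan is to compute the $N$-invariants of the defining exact sequence \eqref{eq:ipdf} directly, and then obtain the statement for $J$ by duality. Taking $N$-invariants is left exact. So applying $(-)^{N}$ to
\begin{equation*}
0\rightarrow I_{G/\cH}^{(\varphi)}\rightarrow \bigoplus_{H\in \cH^{\set}}\Z[G/H]^{\oplus m_{\cH}(H)}\rightarrow \Z,
\end{equation*}
where $N$ acts trivially on $\Z$, identifies $(I_{G/\cH}^{(\varphi)})^{N}$ with the kernel of the restricted map $\bigoplus_{H}(\Z[G/H]^{N})^{\oplus m_{\cH}(H)}\rightarrow \Z$. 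This holds as $G/N$-lattices.

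The key step is to describe $\Z[G/H]^{N}$ together with the restriction of $\varepsilon_{G/H}$.
\begin{itemize}
\item The $N$-orbits on $G/H$ are the fibres of $G/H\rightarrow G/HN$. Since $N$ is normal, $NgH=gNH=gHN$, so the orbit through $gH$ is $gHN/H$ and has $(HN:H)$ elements.
\item Hence $\Z[G/H]^{N}$ is free on the orbit sums $s_{gHN}:=\sum_{x\in gHN/H}x$. The map $s_{gHN}\mapsto gHN$ is a $G/N$-equivariant isomorphism $\Z[G/H]^{N}\cong \Z[(G/N)/(HN/N)]$.
\item Under this isomorphism, $\varepsilon_{G/H}$ becomes $(HN:H)\cdot\varepsilon_{(G/N)/(HN/N)}$.
\end{itemize}
Consequently $(I_{G/\cH}^{(\varphi)})^{N}$ is the kernel of
\begin{equation*}
\bigoplus_{H}\Z[(G/N)/\overline{H}]^{\oplus m_{\cH}(H)}\rightarrow \Z,
\end{equation*}
whose components carry the weights $(HN:H)\varphi(H)_{i}$. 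Collecting all $H$ with the same image $\overline{H}=HN/N$ regroups these summands into the multiset $\cH^{N}$ with weight function $\overline{\varphi}_{G/N}$. This gives $(I_{G/\cH}^{(\varphi)})^{N}\cong I_{(G/N)/\cH^{N}}^{(\overline{\varphi}_{G/N})}$.

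Next I would pass to the normalization. Dividing all weights by their common gcd $d$ changes the map to $\Z$ only by the factor $d$, so the kernel is unchanged; this is Lemma \ref{lem:dvci}. It gives $(I_{G/\cH}^{(\varphi)})^{N}\cong I_{(G/N)/\cH^{N}}^{(\overline{\varphi}_{G/N}^{\nor})}$.

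For $J$ I would use the definition $M^{[N]}=((M^{\circ})^{N})^{\circ}$ together with $(J_{G/\cH}^{(\varphi)})^{\circ}=((I_{G/\cH}^{(\varphi)})^{\circ})^{\circ}=I_{G/\cH}^{(\varphi)}$ for lattices. This gives
\begin{equation*}
(J_{G/\cH}^{(\varphi)})^{[N]}=\bigl((I_{G/\cH}^{(\varphi)})^{N}\bigr)^{\circ}\cong \bigl(I_{(G/N)/\cH^{N}}^{(\overline{\varphi}_{G/N}^{\nor})}\bigr)^{\circ}=J_{(G/N)/\cH^{N}}^{(\overline{\varphi}_{G/N}^{\nor})}.
\end{equation*}

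The main point needing care is the orbit-sum identification. I need to check that it is genuinely $G/N$-equivariant, namely that $g's_{gHN}=s_{g'gHN}$, and that the augmentation picks up exactly the factor $(HN:H)$, uniformly for all $H\in\cH$. I also need the bookkeeping that merges different $H$ with the same $HN/N$ into one multiset entry, with the weight tuples concatenated. The rest is formal.
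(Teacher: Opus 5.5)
The paper gives no proof of this statement; it is quoted from \cite[Proposition 3.22]{Hasegawa}, so there is no in-paper argument to compare against. Your proof is correct and complete, and it is the natural argument. The orbit-sum identification $\Z[G/H]^{N}\cong \Z[(G/N)/(HN/N)]$ is $G/N$-equivariant. Under it, $\varepsilon_{G/H}$ restricts to $(HN:H)\,\varepsilon_{(G/N)/(HN/N)}$. Lemma \ref{lem:dvci} then absorbs the normalization, and dualizing via $M^{[N]}=((M^{\circ})^{N})^{\circ}$ together with $(J^{(\varphi)}_{G/\cH})^{\circ}\cong I^{(\varphi)}_{G/\cH}$ gives the $J$-statement.

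Two small bookkeeping points are worth making explicit. First, the factor $(HN:H)$ depends on $H$ itself and not only on $\overline{H}=HN/N$. Your summand-by-summand treatment before regrouping already handles this. Second, the concatenated weight tuples must be reordered to lie in $\Delta$, which only permutes direct summands.
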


\section{Proof of Theorems \ref{mtd1} and \ref{mth3}}\label{sect:pf13}

\begin{dfn}
Let $G$ be a finite group, and $\cH$ a multiset of its subgroups. Then we define a positive integer $d_{G}(\cH)$ as follows: 
\begin{equation*}
    d_{G}(\cH):=\gcd((G:H)\mid H\in \cH). 
\end{equation*}
\end{dfn}

\begin{thm}\label{thm:gcdo}
Let $G$ be a finite group, and $\cH$ a multiset of its subgroups satisfying $\#\cH \geq 2$ and $d_{G}(\cH)=1$. Then the $G$-lattices $I_{G/\cH}$ and $J_{G/\cH}$ are stably permutation. In particular, $J_{G/\cH}$ is a quasi-permutation $G$-lattice. 
\end{thm}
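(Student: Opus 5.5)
We aim to show $I_{G/\cH}$ is stably permutation; then $J_{G/\cH}=I_{G/\cH}^{\circ}$ is stably permutation as well, since the dual of a permutation lattice is permutation and duality commutes with direct sums. A stably permutation lattice is quasi-permutation (take $0\to M\oplus R\to R'\to 0\to 0$, or use that $[\cdot]^{\fl}$ is a monoid endomorphism with $[0]^{\fl}=0$). So everything reduces to the statement for $I_{G/\cH}$.

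The key idea is a Euclidean-algorithm reduction using the weighted lattices $I^{(\varphi)}_{G/\cH}$. Write $\cH=\{H_1,\dots,H_r\}$ (as a multiset, $r\ge 2$) and $n_i=(G:H_i)$. First we use an elementary change of basis. For $x=\sum_i a_i x_i\in\bigoplus_i\Z[G/H_i]$, define a map by sending the generator $H_j$-coset $e_j$ to $e_j - c\,\mathrm{(something)}$ in a way compatible with $\varepsilon$. Concretely, take the $G$-equivariant map $\Z[G/H_j]\to\Z[G/H_j]\oplus\Z[G/H_i]$ given by $e_j\mapsto e_j - c\,N_i$, where $N_i$ is the norm element (the sum of all cosets of $G/H_i$, which is $G$-fixed). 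It changes the augmentation by $-c\,n_i$. Hence the automorphism of $\bigoplus\Z[G/H]$ given by $x_j\mapsto x_j-c\,\varepsilon(x_j)N_i$ (unipotent, hence invertible) transforms the kernel of $(\varphi_1\varepsilon,\dots,\varphi_r\varepsilon)$ into the kernel of the same map with $\varphi_j$ replaced by $\varphi_j - c\,\varphi_i n_i$. Thus $I^{(\varphi)}_{G/\cH}$ is isomorphic to $I^{(\varphi')}_{G/\cH}$ whenever $\varphi'$ is obtained from $\varphi$ by $\varphi_j\mapsto \varphi_j+c\,n_i\varphi_i$, provided the weights stay positive; signs can be handled by flipping generators if needed.

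Starting from $\varphi\equiv 1$, the relevant quantities are the products $w_i:=\varphi_i n_i$, i.e.\ the image of the augmentation map. Rather than manipulating weights, it is cleaner to phrase this as follows: $I$ is the kernel of a surjection $\bigoplus_i\Z[G/H_i]\to\Z$, where each summand maps via $\varepsilon$ composed with multiplication by an integer $\varphi_i$. The value of the map on the norm element $N_i$ is $\varphi_i n_i$. Precomposing with the automorphism above lets us replace $\varphi_j$ by $\varphi_j+c\,\varphi_i n_i$ for any integer $c$. Since $\gcd(n_1,\dots,n_r)=1$, we run a Euclidean algorithm. Indeed, since $\gcd(n_i)=1$, some integer combination gives $1$, and we want to reach $\varphi_{j}\in\{\pm1\}$ with some other $\varphi_i n_i$ arbitrary. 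Once some $\varphi_{j_0}=\pm1$, we can also clear the other weights to $0$, since $\varphi_i\mapsto\varphi_i+c\,n_{j_0}\varphi_{j_0}$ only shifts by multiples of $n_{j_0}$. That alone does not kill them, so instead we choose the order carefully.

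I expect this Euclidean reduction to be the main obstacle, since only shifts by multiples of $n_i\varphi_i$ are allowed. To get around it, I would first aim for a state where some weight, say $\varphi_1$, equals $1$ and another index has $\varphi_2 n_2\equiv$ anything. Then note directly that if one summand $\Z[G/H_1]$ has weight $\varphi_1$ with $\varphi_1 n_1$ dividing all other $\varphi_i$, we can reduce the others to $0$. A weight $0$ summand splits off as a permutation direct summand, which is the spirit of Lemma \ref{lem:rded}. So the target is: find an index $i$ with $\varphi_i n_i=\pm1$, i.e.\ $n_i=1$, which is impossible in general. Hence I would instead prove the claim in two indices and then induct. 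For $r=2$ with $\gcd(n_1,n_2)=1$, choose integers $a,b$ with $a n_1+ b n_2=1$. Consider the $G$-map $\Z[G/H_1]\oplus\Z[G/H_2]\to\Z[G/H_1]\oplus\Z[G/H_2]\oplus\Z$ whose last component is the augmentation-type map. One then shows $I\oplus\Z\cong\Z[G/H_1]\oplus\Z[G/H_2]$ via the section $\Z\to\bigoplus\Z[G/H_i]$, $1\mapsto aN_1+bN_2$, which is $G$-invariant and splits $\varepsilon$ since $\varepsilon(aN_1+bN_2)=an_1+bn_2=1$. This is in fact the whole proof for all $r$: whenever $d_G(\cH)=1$, choose integers $a_H$ with $\sum_{H\in\cH} a_H (G:H)=1$. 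Then $s(1)=\sum a_H N_H$ is a $G$-equivariant section of the surjection in the defining sequence of $I_{G/\cH}$. So $I_{G/\cH}\oplus\Z\cong\bigoplus_{H\in\cH}\Z[G/H]$, and $I_{G/\cH}$ is stably permutation. Dualizing gives the claim for $J_{G/\cH}$. The hypothesis $\#\cH\ge2$ is only needed to exclude the trivial case $H=G$ with $r=1$, where $I=0$ is still fine; so I expect no real obstacle once the invariant section is found.
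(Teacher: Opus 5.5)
Your final argument is correct and is exactly the paper's proof: choose integers $a_H$ with $\sum_{H\in\cH} a_H (G:H)=1$, split the augmentation with the $G$-invariant element $\sum_H a_H N_H$ to get $I_{G/\cH}\oplus\Z\cong\bigoplus_{H\in\cH}\Z[G/H]$, then dualize; the earlier weight-shifting/Euclidean digression is unnecessary and can be dropped. One small fix: the sequence showing quasi-permutation should be $0\to J_{G/\cH}\to J_{G/\cH}\oplus\Z\to\Z\to 0$ (in general, $M\oplus R\cong R'$ gives $0\to M\to R'\to R\to 0$), not the sequence you wrote.
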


\begin{proof}
It suffices to prove that $I_{G/\cH}$ is stably permutation. Write $\cH=\{H_{1},\ldots,H_{r}\}$ where $r\geq 2$. Consider the canonical exact sequence
\begin{equation}\label{eq:dodi}
0\rightarrow I_{G/\cH}\rightarrow \bigoplus_{i=1}^{r}\Z[G/H_{i}]\xrightarrow{\varepsilon:=(\varepsilon_{G/H_{i}})_{i}} \Z \rightarrow 0. 
\end{equation}
By assumption, there exist integers $a_{1},\ldots,a_{r}$ such that
\begin{equation*}
a_{1}(G:H_{1})+\cdots +a_{r}(G:H_{r})=1. 
\end{equation*}
Now, set
\begin{equation*}
j\colon \Z \rightarrow \bigoplus_{i=1}^{r}\Z[G/H];\,1 \mapsto \left(a_{1}\sum_{g_{1}\in G/H_{1}}g_{1},\ldots,a_{r}\sum_{g_{r}\in G/H_{r}}g_{r}\right). 
\end{equation*}
Then the composite $\varepsilon \circ j$ coincides with the identity map on $\Z$. Hence, the exact sequence \eqref{eq:dodi} splits, that is, there is an isomorphism of $G$-lattices
\begin{equation*}
I_{G/\cH}\oplus \Z \cong \bigoplus_{i=1}^{r}\Z[G/H_{i}]. 
\end{equation*}
Taking duals, we obtain an isomorphism of $G$-lattices
\begin{equation*}
J_{G/\cH}\oplus \Z \cong \bigoplus_{i=1}^{r}\Z[G/H_{i}]. 
\end{equation*}
Hence, the proof is complete. 
\end{proof}

\begin{proof}[Proof of Theorem \ref{mtd1}]
Let $L/k$ be a finite Galois field extension containing $K_{1},\ldots,K_{r}$. Put $G:=\Gal(L/k)$ and $\cH:=\{\Gal(L/K_{i})\mid i\in \{1,\ldots,r\}\}$. By Proposition \ref{prop:coch}, it suffices to prove that the $G$-lattice $J_{G/\cH}$ is quasi-permutation. However, the assertion follows from Theorem \ref{thm:gcdo}. 
\end{proof}

For a prime number $p$, let $\ord_{p}\colon \Q^{\times}\rightarrow \Z$ be the homomorphism defined as
\begin{equation*}
\ord_{p}(\ell)=
\begin{cases}
1&\text{if }\ell=p;\\
0&\text{if }\ell \neq p
\end{cases}
\end{equation*}
for each prime number $\ell$. 

\begin{lem}\label{lem:psyl}
Let $G$ be a finite group, and $H$ its subgroup. Take a prime divisor $p$ of $\#G$, and pick a $p$-Sylow subgroup $P$ of $G$. Then we have
    \begin{equation*}
        \ord_{p}(P:P\cap gHg^{-1})\geq \ord_{p}(G:H)
    \end{equation*}
    for any $g\in G$. Moreover, the equality holds for some $g\in G$. 
\end{lem}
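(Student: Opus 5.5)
The plan is to use double cosets $P\backslash G/H$ together with the orbit decomposition of $G/H$ under left multiplication by $P$.

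First, decompose $G$ into double cosets. Choose representatives $g_{1},\ldots,g_{s}$ of $P\backslash G/H$. Then $G/H$ decomposes into $P$-orbits $Pg_{i}H/H$, and the stabilizer in $P$ of $g_{i}H$ is $P\cap g_{i}Hg_{i}^{-1}$. Hence
\begin{equation*}
(G:H)=\sum_{i=1}^{s}(P:P\cap g_{i}Hg_{i}^{-1}).
\end{equation*}
Every $g\in G$ lies in some double coset $Pg_{i}H$. Writing $g=xg_{i}h$ with $x\in P$ and $h\in H$, we get $P\cap gHg^{-1}=x(P\cap g_{i}Hg_{i}^{-1})x^{-1}$. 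So the index $(P:P\cap gHg^{-1})$ depends only on the double coset of $g$, and it suffices to treat the representatives $g_{i}$.

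For the inequality, I would avoid the orbit sum and argue directly. We have $P\cap gHg^{-1}\subset gHg^{-1}$, and $\#(P\cap gHg^{-1})$ is a power of $p$ dividing $\#(gHg^{-1})=\#H$. So $\ord_{p}\#(P\cap gHg^{-1})\leq \ord_{p}\#H$. Therefore
\begin{equation*}
\ord_{p}(P:P\cap gHg^{-1})=\ord_{p}\#P-\ord_{p}\#(P\cap gHg^{-1})\geq \ord_{p}\#G-\ord_{p}\#H=\ord_{p}(G:H),
\end{equation*}
using $\ord_{p}\#P=\ord_{p}\#G$.

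For the existence of equality, there are two routes. The simplest is to choose a $p$-Sylow subgroup $Q$ of $H$ and extend it to a $p$-Sylow subgroup of $G$. By Sylow's theorem this subgroup is $g^{-1}Pg$ for some $g\in G$, so $gQg^{-1}\subset P\cap gHg^{-1}$. Then $\ord_{p}\#(P\cap gHg^{-1})\geq \ord_{p}\#H$, which combined with the previous bound gives equality.

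Alternatively, one can use the orbit sum. Each term has $p$-adic valuation at least $\ord_{p}(G:H)$. If all terms had strictly larger valuation, the sum $(G:H)$ would have valuation strictly larger than $\ord_{p}(G:H)$, a contradiction.

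The only mildly delicate step is the Sylow embedding in the first route. Since it is standard, I do not expect a real obstacle.
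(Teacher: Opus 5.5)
Your proof is correct and follows the paper's argument. The inequality comes from $\ord_{p}\#(P\cap gHg^{-1})\leq \ord_{p}\#H$, and equality comes from Sylow's theorem by conjugating a Sylow $p$-subgroup of $H$ into $P$; this is exactly what the paper's one-line appeal to Sylow's theorem amounts to, and your orbit-sum alternative via $(G:H)=\sum_{i}(P:P\cap g_{i}Hg_{i}^{-1})$ is also valid but not needed.
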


\begin{proof}
The former assertion follows from
\begin{equation*}
    \ord_{p}(\#(P\cap gHg^{-1}))\leq \ord_{p}(\#gHg^{-1})=\ord_{p}(\#H)
\end{equation*}
for every $g\in G$. The latter is a consequence of Sylow's theorem. 
\end{proof}

Let $G$ be a finite group, and $\cH$ a multiset of its subgroups. Recall \cite[Definition 5.3]{Hasegawa} that the integers $\mu(\cH)$ and $M(\cH)$ are defined as follows: 
\begin{equation*}
\mu(\cH):=\min\{(G:H)\in \Zpn \mid H\in \cH\},\quad M(\cH):=\max \{(G:H)\in \Zpn \mid H\in \cH\}. 
\end{equation*}

\begin{cor}\label{cor:hppo}
Let $G$ be a finite group, and $\cH$ a multiset of its subgroups satisfying $d_G(\cH)>1$.
Take a prime divisor $p$ of $d_G(\cH)$, and choose a $p$-Sylow subgroup $P$ of $G$. Then we have
\begin{equation*}
\ord_{p}\mu(\cH_{P})=\ord_{p}\mu(\cH^{\red})=\ord_{p}(d_G(\cH)). 
\end{equation*}
\end{cor}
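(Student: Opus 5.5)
The plan is to compute both sides through the $p$-adic valuations of indices, using Lemma \ref{lem:psyl} as the main input. Write $\cH=\{H_{1},\ldots,H_{r}\}$ and $e:=\ord_{p}d_{G}(\cH)=\min_{i}\ord_{p}(G:H_{i})$; the identity $\ord_{p}\gcd=\min\ord_{p}$ is immediate. I expect the equality for $\cH_{P}$ to go through cleanly, and the equality for $\cH^{\red}$ to be the real obstacle.

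\textbf{Step 1: the equality for $\cH_{P}$.} By Proposition \ref{prop:rtmc}, $\cH_{P}$ consists of the groups $P\cap gH_{i}g^{-1}$ with $g\in C_{H_{i}}$. Since $P$ is a $p$-group, every index $(P:P\cap gH_{i}g^{-1})$ is a power of $p$. Hence $\mu(\cH_{P})=p^{m}$, where $m:=\min_{i,g}\ord_{p}(P:P\cap gH_{i}g^{-1})$, and therefore $\ord_{p}\mu(\cH_{P})=m$.

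The first part of Lemma \ref{lem:psyl} gives $m\geq e$. For the reverse inequality, choose $i$ with $\ord_{p}(G:H_{i})=e$. The second part of the lemma gives some $g\in G$ attaining equality. The quantity $\ord_{p}(P:P\cap gH_{i}g^{-1})$ depends only on the double coset $PgH_{i}$, since replacing $g$ by $xgh$ with $x\in P$, $h\in H_{i}$ merely conjugates $P\cap gH_{i}g^{-1}$ by $x$. So we may take $g\in C_{H_{i}}$, which gives $m\leq e$ and hence $\ord_{p}\mu(\cH_{P})=e$.

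\textbf{Step 2: the equality for $\cH^{\red}$.} The elements of $\cH^{\red}$ are among the $H_{i}$, so each of their indices has $\ord_{p}\geq e$. For the reverse inequality I would try to show that the minimum of $\ord_{p}(G:H)$ is already attained inside $\cH^{\red}$. If $H_{i}\subset H_{j}$, then $(G:H_{j})$ divides $(G:H_{i})$, so passing to a maximal overgroup never increases $\ord_{p}$. Hence some $H\in\cH^{\red}$ satisfies $\ord_{p}(G:H)=e$, and so $\ord_{p}d_{G}(\cH^{\red})=e$.

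\textbf{Main obstacle.} Step 2 controls $\ord_{p}d_{G}(\cH^{\red})$, but not $\ord_{p}\mu(\cH^{\red})$. The index of minimal size in $\cH^{\red}$ need not have minimal $p$-adic valuation. For example, in $G=S_{4}$ with $p=2$, take $\cH=\{S_{3},V\}$ where $V$ is a subgroup of order $4$. Then $\cH=\cH^{\red}$, the indices are $4$ and $6$, so $\mu(\cH^{\red})=4$ has $\ord_{2}=2$, while $\ord_{2}d_{G}(\cH)=1$.

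So the middle equality cannot be proved in the generality stated with $\mu$ the minimal index. In writing this up I would prove the statement with $\ord_{p}\mu(\cH^{\red})$ read as the minimal $p$-adic valuation of the indices of elements of $\cH^{\red}$, which is exactly what Step 2 establishes. I would also check whether \cite[Definition 5.3]{Hasegawa} is intended only for $p$-groups, where all indices are powers of $p$. In that setting the argument of Step 1 applies verbatim and gives $\ord_{p}\mu(\cH^{\red})=\ord_{p}d_{G}(\cH)$.
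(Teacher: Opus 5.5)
Your Step 1 is correct and is the paper's argument: Lemma \ref{lem:psyl} combined with $\ord_{p}d_{G}(\cH)=\min_{H\in\cH}\ord_{p}(G:H)$. Your observation that $\ord_{p}(P:P\cap gHg^{-1})$ depends only on the double coset $PgH$, so the minimizing $g$ may be taken in $C_{H}$, fills in a point the paper leaves implicit.

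Your objection to the middle term is also correct, and the paper's one-line proof does not address it. In fact, for any multiset $\cH$ one has $\mu(\cH^{\red})=\mu(\cH)$. A member of minimal index cannot be properly contained in another member, since that member would have strictly smaller index, so it already lies in $\cH^{\red}$. The literal middle equality therefore asks that the smallest index in $\cH$ also have the smallest $p$-adic valuation. Your example in $S_{4}$ with $\cH=\{S_{3},V\}$ and $p=2$ refutes this.

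The repair you propose is not the right one, however. Reading $\ord_{p}\mu(\cH^{\red})$ as the minimal $p$-adic valuation over $\cH^{\red}\subset G$ gives a true statement, but it is not what the paper uses afterwards. The later applications show that the intended term is $\mu(\cH_{P}^{\red})$, the reduced set of subgroups of $P$:
\begin{itemize}
\item in the proof of Theorem \ref{thm:gcdt}, ``$\mu(\cH^{\red})=2$'' is immediately turned into a statement about $\cH_{P}^{\red}$;
\item in Theorem \ref{thm:gcdb}, the corollary is invoked as $\mu(\cH_{P}^{\red})\geq 4$.
\end{itemize}
Your closing remark about $p$-groups points in this direction. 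The fix, though, is to replace $\cH^{\red}$ by $\cH_{P}^{\red}$ while keeping $G$ arbitrary, not to restrict $G$ itself. With that reading, the observation above applied inside $P$ gives $\mu(\cH_{P}^{\red})=\mu(\cH_{P})$. The middle equality then follows at once from your Step 1, and that one line is what your write-up should contain.
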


\begin{proof}
This follows from Lemma \ref{lem:psyl} since $\ord_{p}(d_{G}(\cH))$ coincides with $\min\{\ord_{p}(G:H)\mid H\in \cH\}$. 
\end{proof}

\begin{thm}\label{thm:qips}
Let $G$ be a finite group, and $\cH$ a multiset of its subgroups. For each prime divisor $p$ of $d_{G}(\cH)$, pick a $p$-Sylow subgroup $G_{p}$ of $G$. Then the following are equivalent: 
\begin{enumerate}
\item $J_{G/\cH}$ is a quasi-invertible $G$-lattice; 
\item $J_{G/\cH}$ is a quasi-invertible $G_{p}$-lattice for any prime divisor $p$ of $d_{G}(\cH)$; 
\item $J_{G_{p}/\cH_{G_{p}}}$ is a quasi-invertible $G_{p}$-lattice for any prime divisor $p$ of $d_{G}(\cH)$; 
\item $J_{G_{p}/\cH_{G_{p}}^{\red}}$ is quasi-invertible $G_{p}$-lattice for any prime divisor $p$ of $d_{G}(\cH)$; 
\item $J_{G_{p}/\cH_{G_{p}}^{\srd}}$ is a quasi-invertible $G_{p}$-lattice for any prime divisor $p$ of $d_{G}(\cH)$. 
\end{enumerate}
\end{thm}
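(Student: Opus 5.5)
The cheap equivalences come first. (i)$\Rightarrow$(ii) is Proposition \ref{prop:rtrd}. (ii)$\Leftrightarrow$(iii) holds because Proposition \ref{prop:rtmc} (with $\varphi$ trivial) gives an isomorphism of $G_{p}$-lattices $J_{G/\cH}\cong J_{G_{p}/\cH_{G_{p}}}$. (iii)$\Leftrightarrow$(iv) is Proposition \ref{prop:rdrd} applied to the group $G_{p}$; quasi-invertibility depends only on the similarity class, since $[M]^{\fl}$ is well defined and adding permutation summands preserves quasi-invertibility. (iv)$\Leftrightarrow$(v) follows from Proposition \ref{prop:rdsr}, because $J_{G_{p}/\cH^{\red}_{G_{p}}}$ differs from $J_{G_{p}/\cH^{\srd}_{G_{p}}}$ by a permutation direct summand. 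One caveat: Propositions \ref{prop:rdrd} and \ref{prop:rdsr} should be applied when the relevant multisets have at least two elements or the degenerate cases are trivially consistent. I would check separately that $\mu(\cH_{G_{p}})>1$, which Corollary \ref{cor:hppo} gives, so no member of $\cH_{G_{p}}$ equals $G_{p}$.

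The main step is (ii)$\Rightarrow$(i). I would use the standard criterion that a $G$-lattice $M$ is quasi-invertible iff $[M]^{\fl}$ is invertible, i.e.\ a flabby class $F$ is invertible. Invertibility can be tested on Sylow subgroups: $F$ is invertible over $G$ iff it is invertible over every Sylow $G_{p}$. This is a known fact (e.g.\ Colliot-Th\'el\`ene--Sansuc, or \cite{Lorenz2005}), and it follows from a restriction--corestriction argument on $\operatorname{Ext}^{1}_{G}(F,\cdot)$ or on a permutation-cover splitting. Since a flabby resolution of $M$ over $G$ restricts to one over $G_{p}$, (ii) gives invertibility of $F$ over $G_{p}$ for $p\mid d_{G}(\cH)$.

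It remains to handle primes $p\nmid d_{G}(\cH)$, which is the main obstacle. Here I would show that $J_{G/\cH}$ is already stably permutation (hence quasi-permutation) as a $G_{p}$-lattice. By Proposition \ref{prop:rtmc}, $J_{G/\cH}\cong J_{G_{p}/\cH_{G_{p}}}$. By Lemma \ref{lem:psyl}, some $H\in\cH$ and $g$ give $\ord_{p}(G_{p}:G_{p}\cap gHg^{-1})=\ord_{p}(G:H)$, and for some $H$ this is $0$ because $p\nmid d_{G}(\cH)$. So $\cH_{G_{p}}$ contains the subgroup $G_{p}$ itself, and $d_{G_{p}}(\cH_{G_{p}})=1$. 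If $\#\cH_{G_{p}}\ge 2$, Theorem \ref{thm:gcdo} makes $J$ stably permutation over $G_{p}$. If $\#\cH_{G_{p}}=1$, then $\cH_{G_{p}}=\{G_{p}\}$ and $J=0$. Thus the flabby class is invertible over every Sylow subgroup, hence over $G$, which gives (i).
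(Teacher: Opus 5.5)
Your proposal is correct and follows the paper's proof. It uses the same cheap equivalences via Propositions \ref{prop:rtrd}, \ref{prop:rtmc}, \ref{prop:rdrd} and \ref{prop:rdsr}, and for (ii)$\Rightarrow$(i) the same use of Theorem \ref{thm:gcdo} at primes $p\nmid d_{G}(\cH)$, followed by the Sylow-local criterion for quasi-invertibility, which the paper simply quotes as \cite[Corollary 1.6]{Endo2011} while you unpack it through the flabby class. Your direct appeal to Lemma \ref{lem:psyl} to get $G_{p}\in\cH_{G_{p}}$, and your treatment of the degenerate case $\cH_{G_{p}}=\{G_{p}\}$, are slightly cleaner than the paper's appeal to Corollary \ref{cor:hppo}, which is stated only for $p\mid d_{G}(\cH)$.
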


\begin{proof}
(i) $\Rightarrow$ (ii): This is clear. (ii) $\Leftrightarrow$ (iii) is a consequence of Proposition \ref{prop:rtmc}. Furthermore, (iii) $\Leftrightarrow$ (iv) and (iii) $\Leftrightarrow$ (v) are consequences of Propositions \ref{prop:rdrd} and \ref{prop:rdsr}, respectively. Hence, it suffices to prove (i) under assuming (ii)--(v). 

Consider a prime divisor $p$ of $\#G$ that does not divide $d_{G}(\cH)$, and pick a $p$-Sylow subgroup $G_{p}$ of $G$. Then, we have $\ord_{p}\mu(\cH^{\red})=0$ by Corollary \ref{cor:hppo}. Hence, Theorem \ref{thm:gcdo} implies that the $G_{p}$-lattice $J_{G_{p}/\cH_{G_{p}}}$ is quasi-permutation. Consequently, $J_{G/\cH}$ is a quasi-permutation $G_{p}$-lattice. Combining the above result with assumption (ii), we obtain that $J_{G/\cH}$ is quasi-invertible as a $G_{p}$-lattice for all prime divisors $p$ of $d_{G}(\cH)$. Therefore, (i) follows from  \cite[Corollary 1.6]{Endo2011}. 
\end{proof}

\begin{proof}[Proof of Theorem \ref{mth3}]
Set $G:=\Gal(L/k)$, $H_{i}:=\Gal(L/K_{i})$ for each $i\in \{1,\ldots,r\}$, and $G_{p}:=\Gal(L/L^{(p)})$ for each prime divisor $p$ of $d_{k}(\bK)$. Consider the multiset $\cH:=\{H_{1},\ldots,H_{r}\}$. Then, Proposition \ref{prop:coch} implies that $T_{\bK/k}$ is retract rational over $k$ (resp.~$L^{(p)}$ for any prime divisor $p$ of $d_{k}(\bK)$) if and only if $J_{G/\cH}$ is quasi-invertible as a $G$-lattice (resp.~$G_{p}$-lattice for any prime divisor of $d_{G}(\cH)$). Hence, the assertion follows from Theorem \ref{thm:qips}. 
\end{proof}

Finally, note that whether (iv) and (v) in Theorem \ref{thm:qips} hold can be verified using the following theorems obtained by the authors (\cite{Hasegawa}). Therefore, we acquire a criterion for the $G$-lattice $J_{G/\cH}$ to be quasi-invertible. 

\begin{prop}[{\cite[Theorem 5.10]{Hasegawa}}]\label{prop:odqp}
Let $p$ be an odd prime number, and $G$ a $p$-group. Consider a reduced set $\cH$ of subgroups of $G$. Then the following are equivalent: 
\begin{enumerate}
    \item $J_{G/\cH}$ is a quasi-permutation $G$-lattice; 
    \item $J_{G/\cH}$ is a quasi-invertible $G$-lattice; 
    \item $\#\cH=1$ and $G/N^{G}(\cH)$ is cyclic. 
\end{enumerate}
\end{prop}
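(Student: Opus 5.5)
The implications (i) $\Rightarrow$ (ii) and (iii) $\Rightarrow$ (i) are formal. Quasi-permutation implies quasi-invertible by definition. If $\#\cH=1$, say $\cH=\{H\}$, then $J_{G/\cH}=J_{G/H}$ is the character lattice of a norm one torus. With $N:=N^{G}(H)$, the group $N$ acts trivially on $\Z[G/H]$, hence on $J_{G/H}$, so $J_{G/H}$ is inflated from the $G/N$-lattice $J_{(G/N)/(H/N)}$. For $G/N$ cyclic, Endo--Miyata (or directly: the norm one torus of a subextension of a cyclic extension) shows $J_{(G/N)/(H/N)}$ is quasi-permutation. Explicitly, if $G/N=C_n$ and $\overline H=C_m$, then $J_{C_n/C_m}$ is similar to a quasi-permutation lattice via the standard resolution $0\to J_{C_n/C_m}\to \Z[C_n/C_m]\cdots$. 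Since inflation preserves permutation lattices and exact sequences, (i) follows. So the content is (ii) $\Rightarrow$ (iii).

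For (ii) $\Rightarrow$ (iii), I would first reduce to $N^{G}(\cH)=1$. The lattice $J_{G/\cH}$ has trivial $N^{G}(\cH)$-action, and quasi-invertibility descends to $G/N^{G}(\cH)$ via Lemma \ref{lem:qtfx} (apply it to coflabby resolutions of the dual, or use Proposition \ref{prop:tkfx}). Then I would argue by induction on $\#G$ using restriction to subgroups (Proposition \ref{prop:rtmc}) and quotients (Proposition \ref{prop:tkfx}), which preserve quasi-invertibility.

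The key base cases are $G=C_p\times C_p$ and suitable small $p$-groups with $\#\cH\geq 2$. Here I would show non-invertibility of the flabby class by computing $H^{1}(G',[J]^{\fl})\neq 0$, or equivalently the Tate–Shafarevich-type group $\hat H^{-1}$ / $H^{2}$ obstruction $\Sha^{2}_{\omega}(G',J_{G'/\cH'})\neq 0$ for some subgroup $G'$. For odd $p$, the obstruction should come from $\hat H^{0}$ computations of $I_{G/\cH}$ on a bicyclic subgroup.

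The inductive step runs as follows. If $\#\cH\geq 2$ with $\cH$ reduced, I pick two distinct members $H_1,H_2$. Passing to a maximal subgroup $P$ containing both (or, if $\langle H_1,H_2\rangle=G$, a quotient by a central subgroup of order $p$ in the relevant core), I reduce to a smaller configuration that is still reduced with at least two members. Lemma \ref{lem:rded} and Proposition \ref{prop:rdsr} strip off permutation summands so that reducedness is maintained. If $\#\cH=1$ but $G/N^{G}(H)$ is noncyclic, it contains a $C_p\times C_p$ quotient of a subgroup on which $H$ restricts nontrivially. The classical result (Endo--Miyata for $p$ odd) that $J_{G/H}$ is not quasi-invertible in that case applies.

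The main obstacle is ensuring that the reduction in the multi-subgroup case lands in a configuration with a nonvanishing obstruction. This requires controlling $\cH_P$ after restriction, where indices change and reducedness may fail. Here oddness of $p$ is essential, since for $p=2$ the analogue is false. I would attempt to handle it by choosing $P$ to be the subgroup generated by $H_1$ and a conjugate of $H_2$, so that $\cH_P$ contains two incomparable members of minimal index, and then compute the obstruction directly.
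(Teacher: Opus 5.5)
Your directions (i)$\Rightarrow$(ii) and (iii)$\Rightarrow$(i) are fine. In the latter, a core-free subgroup of a cyclic group is trivial, so $H=N^{G}(H)$ and you are only inflating $J_{C_n}\cong I_{C_n}$. For $\#\cH=1$ with $G/N^{G}(H)$ non-cyclic you must also cite Endo's theorem on non-Galois norm one tori with nilpotent splitting group, because Endo--Miyata only covers $H=N^{G}(H)$.

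The genuine gap is (ii)$\Rightarrow$(iii) for $\#\cH\geq 2$. First, the induction is never actually set up. Restriction can collapse $\cH_{P}^{\red}$ to a single subgroup, and quotients via Proposition~\ref{prop:tkfx} produce weighted lattices $J^{(\varphi)}$, which your inductive statement does not cover. More seriously, the obstruction you intend to compute vanishes in the minimal case.

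Take $G=H_{1}\times H_{2}\cong C_{p}\times C_{p}$ and $\cH=\{H_{1},H_{2}\}$. All proper subgroups and proper quotients of $G$ are cyclic, so by Proposition~\ref{prop:emqi} nothing can be detected below $G$. Over $G$, the map $(gH_{1},0)\mapsto g\sum_{h\in H_{1}}h$, $(0,gH_{2})\mapsto -g\sum_{h\in H_{2}}h$ induces an exact sequence $0\to J_{G/\cH}\to \Z[G]\to J_{G/H_{2}}\otimes J_{G/H_{1}}\to 0$. Hence $H^{2}(-,J_{G/\cH})\cong H^{1}(-,J_{G/H_{2}}\otimes J_{G/H_{1}})$. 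Restriction to $H_{1}$ is injective by inflation--restriction, because $(J_{G/H_{2}}\otimes J_{G/H_{1}})^{H_{1}}=0$. So $H^{1}(G,[J_{G/\cH}]^{\fl})\cong \Sha^{2}_{\omega}(G,J_{G/\cH})=0$. Also $H^{1}(C,F)\cong \hat{H}^{-1}(C,F)=0$ for cyclic $C$ and flabby $F$. The flabby class is therefore flabby \emph{and} coflabby, and no $H^{1}$- or $\Sha^{2}_{\omega}$-computation on any subgroup can show it is non-invertible. This is the Demarche--Wei vanishing recalled in the introduction. Note too that nothing in your sketch uses that $p$ is odd, whereas for $p=2$ this very configuration is quasi-permutation by Proposition~\ref{prop:tgqp}.

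What is missing is an invariant that distinguishes invertible from flabby-and-coflabby lattices and sees the parity of $p$. One that works here is a count of Tate cohomology orders. Suppose $0\to J\to P\to F\to 0$ is a flabby resolution with $F$ invertible. Then $F\otimes\Z_{p}$ is a permutation $\Z_{p}G$-lattice, by Krull--Schmidt over $\Z_{p}$ and the indecomposability of $\Z_{p}[G/K]$ for a $p$-group. Since $\hat{H}^{-1}(K,F)=0=H^{1}(K,P)$, we get $|\hat{H}^{0}(K,F)|\cdot|\hat{H}^{0}(K,J)|=|\hat{H}^{0}(K,P)|\cdot|\hat{H}^{1}(K,J)|$ for all $K\leq G$.

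In the example, $(\hat{H}^{0}(K,J),\hat{H}^{1}(K,J))$ is:
$(\Z/p,0)$ for $K=G$;
$((\Z/p)^{p-1},0)$ for $K=H_{1},H_{2}$;
$(0,\Z/p)$ for the other $p-1$ subgroups of order $p$.
Write $\delta_{K}$ for the multiplicity of $\Z_{p}[G/K]$ in $F\otimes\Z_{p}$ minus that in $P\otimes\Z_{p}$. The count gives $\sum_{\#K=p}\delta_{K}+2\delta_{G}=-1$, $p\delta_{H_{i}}+\delta_{G}=-(p-1)$, and $p\delta_{K}+\delta_{G}=1$ for the remaining $K$. Summing forces $(1-p)\delta_{G}=1$, which is impossible for odd $p$ and solvable for $p=2$, as it must be.

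A proof of the proposition needs an argument of this kind for the base configurations. It also needs a genuine reduction of an arbitrary reduced $\cH$, including the weighted sets that quotients produce, to those base cases. Your proposal supplies neither.
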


\begin{prop}[{\cite[Theorem 7.16]{Hasegawa}}]\label{prop:tgqp}
Let $G$ be a $2$-group, and $\cH$ a strongly reduced set of its subgroups. Then the $G$-lattice $J_{G/\cH}$ is quasi-permutation if and only if it is quasi-invertible. Moreover, the above two conditions hold if and only if 
\begin{enumerate}
\item $\#\cH=1$ and $G/N^{G}(\cH)$ is cyclic; or
\item $G/N^{G}(\cH) \cong D_{2^{\nu}}$ and $\cH=\{\langle \sigma_{2^{\nu}}^{2m}\tau_{2^{\nu}} \rangle, \langle \sigma_{2^{\nu}}^{2m'+1}\tau_{2^{\nu}} \rangle\}$ for some $\nu \in \Zpn$ and $m,m'\in \Z$. 
\end{enumerate}
\end{prop}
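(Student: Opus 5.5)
The plan is to first normalize so that the subgroups in $\cH$ have trivial common normal core, then prove sufficiency by showing that the two listed configurations give quasi-permutation lattices, and prove necessity by induction on $\#G$, reducing to a finite list of minimal $2$-groups where quasi-invertibility can be refuted cohomologically. Since quasi-permutation implies quasi-invertible, it suffices to show: (i) or (ii) implies quasi-permutation, and quasi-invertible implies (i) or (ii).

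\emph{Normalization.} Put $N:=N^{G}(\cH)$. Then $N$ acts trivially on every $\Z[G/H]$ with $H\in\cH$, hence on $J_{G/\cH}$. So $J_{G/\cH}$ is inflated from $J_{(G/N)/\cH^{N}}$, using Proposition \ref{prop:tkfx} with trivial weights, since $HN=H$.
\begin{itemize}
\item A flabby resolution over $G/N$ inflates to one over $G$. Hence quasi-permutation and quasi-invertible over $G/N$ pass to $G$.
\item Conversely, take $N$-invariants (or $[N]$-parts) of a flabby/coflabby resolution over $G$, using Lemma \ref{lem:qtfx}. This gives the reverse implication.
\end{itemize}
We may therefore assume $N^{G}(\cH)=1$. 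Strong reducedness is preserved under this quotient.

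\emph{Sufficiency.}
\begin{itemize}
\item If $\#\cH=1$ and $G$ is cyclic, $J_{G/\cH}$ is the character lattice of a norm one torus of a cyclic extension. This is quasi-permutation by Endo--Miyata \cite{Endo1975}. Alternatively, for $G$ cyclic and faithful $H=1$, use $0\to\Z\to\Z[G]\to J_{G}\to 0$ and the standard flabby resolution with invertible flabby class, which is then shown to be $0$.
\item In the dihedral case (ii), $H_{1}=\langle\sigma^{2m}\tau\rangle$ and $H_{2}=\langle\sigma^{2m'+1}\tau\rangle$ are two non-conjugate reflection subgroups of order $2$ that generate $D_{2^{\nu}}$. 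Here I would build an explicit flabby resolution of $J_{G/\cH}$ and show its flabby class is stably permutation. Concretely, compare $J_{G/\{H_{1},H_{2}\}}$ with $I_{G/\langle\sigma\rangle}$-type lattices, using the short exact sequence coming from $\Z[G/H_{1}]\oplus\Z[G/H_{2}]\to\Z[G/\langle \sigma^{2}\rangle]$-style maps. The resulting kernel should turn out to be permutation (built from $\Z[G/\langle\sigma^{2},\tau\rangle]$ and $\Z[G/\langle\sigma^{2},\sigma\tau\rangle]$).
\end{itemize}

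\emph{Necessity.} Assume $J_{G/\cH}$ is quasi-invertible and the configuration is not (i) or (ii); I will derive a contradiction by induction on $\#G$. Quasi-invertibility is inherited by subgroups via Proposition \ref{prop:rtmc} (after re-reducing via Propositions \ref{prop:rdrd}, \ref{prop:rdsr}), and by quotients $G/N$ via Proposition \ref{prop:tkfx} (normalizing the weights, which for $2$-groups gives only weights $1$ or reduces to weighted lattices). I would use these to pass to a minimal bad pair $(G,\cH)$, and classify such pairs. I expect them to be:
\begin{itemize}
\item $\#\cH=1$ with $G$ non-cyclic of small order, such as $C_{2}\times C_{2}$, $Q_{8}$, $C_{4}\times C_{2}$, or small semidihedral/modular groups with $H$ core-free;
\item $\#\cH\ge 2$ inside $C_{2}\times C_{2}$, or two or three non-conjugate subgroups in $D_{4}$ or $C_{4}\times C_{2}$ not of the form (ii).
\end{itemize}
For each minimal case, non-quasi-invertibility is shown by exhibiting a subgroup $G'$ with $H^{1}(G',[J_{G/\cH}]^{\fl})\neq 0$, that is, a nonvanishing unramified Brauer/Tate-cohomology obstruction. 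This can be computed as $\Ker$ of restriction maps on $\hat H^{0}$ of the permutation terms. Since invertible lattices are coflabby, this rules out quasi-invertibility. The equivalence of quasi-permutation and quasi-invertible then follows automatically.

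The main obstacle is the reduction step: proving that every strongly reduced core-free configuration outside (i),(ii) has a subquotient landing in the minimal list. This needs careful group theory of $2$-groups. The key input is that a $2$-group all of whose relevant subquotients are cyclic or dihedral of the given shape must itself be cyclic or dihedral, which relies on the classification of $2$-groups with a cyclic maximal subgroup. The dihedral case also requires tracking which pairs of reflection classes survive passing to $G/\langle\sigma^{2^{\nu-1}}\rangle$ and to the subgroups $\langle\sigma^{2},\tau\rangle$.
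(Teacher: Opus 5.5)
\textbf{Sufficiency.} Your normalization to $N^{G}(\cH)=1$ is correct, and sufficiency is fine in outline. Case (ii) is easier than your sketch suggests, and the sketch itself is left unverified. Take $H_{1}=\langle\sigma^{2m}\tau\rangle$ and $H_{2}=\langle\sigma^{2m'+1}\tau\rangle$ in $G=D_{2^{\nu}}$. The map $\Z[G]\to I_{G/\cH}$, $g\mapsto (gH_{1},-gH_{2})$, is onto, because the coset graph on $G/H_{1}\sqcup G/H_{2}$ with edges $\{gH_{1},gH_{2}\}$ is connected (as $\langle H_{1},H_{2}\rangle=G$). Its kernel has rank one and is spanned by $\sum_{g}\chi(g)g$, where $\chi$ is trivial on $\langle\sigma\rangle$ and $-1$ on reflections. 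Dualizing gives $0\to J_{G/\cH}\to\Z[G]\to\Z^{\chi}\to 0$. Pulling back along $\Z[G/\langle\sigma\rangle]\twoheadrightarrow\Z^{\chi}$ (kernel $\Z$, and $\mathrm{Ext}^{1}_{G}(\Z[G],\Z)=0$) yields $0\to J_{G/\cH}\to\Z[G]\oplus\Z\to\Z[G/\langle\sigma\rangle]\to 0$.

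\textbf{Necessity is not proved.} This direction is the substantive content of \cite[Theorem 7.16]{Hasegawa}, which the present paper only quotes. What you give is a program rather than a proof. The minimal bad configurations are guessed, and the reduction to them, which you yourself call the main obstacle, is not carried out. The group-theoretic input you invoke is also not stated precisely enough to check.

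Beyond being incomplete, the program does not close as set up. Quotients take you out of the class of lattices your induction hypothesis covers. By Proposition~\ref{prop:tkfx}, $J_{G/\cH}^{[N]}$ is a \emph{weighted} lattice with weights $(HN:H)$, and these need not become trivial after normalization. For example, in $D_{4}$ take the strongly reduced, core-free set $\cH=\{\langle\sigma^{2},\tau\rangle,\langle\sigma\tau\rangle\}$. Passing to $D_{4}/\langle\sigma^{2}\rangle\cong C_{2}\times C_{2}$ gives weights $(1,2)$ on two incomparable subgroups, which Lemma~\ref{lem:rded} cannot remove. So an induction through quotients needs a weighted version of the classification, which you neither state nor prove.

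\textbf{Your refutation tool is not a complete obstruction.} Consider $G=Q_{8}$, $\cH=\{1\}$, which is on your list. The restriction of $J_{Q_{8}}$ to any $G'\le Q_{8}$ is $J_{G'}$ plus a free summand, so $H^{1}(G',[J_{Q_{8}}]^{\fl})\cong H^{3}(G',\Z)$. This vanishes for the cyclic subgroups and for $Q_{8}$ itself (trivial Schur multiplier). Yet $J_{Q_{8}}$ is not quasi-invertible, by Endo--Miyata; it is detected only through the quotient $J_{Q_{8}}^{[Z(Q_{8})]}\cong J_{C_{2}\times C_{2}}$. So the terminal configurations and the obstruction applied to them must be determined together. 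Nothing in your proposal guarantees that every terminal case has a nonvanishing $H^{1}(G',[J_{G/\cH}]^{\fl})$.
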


\section{Proof of Theorems \ref{mtd2} and \ref{mtd3}}\label{sect:pf23}

We first prepare the notion on multisets of subgroups in finite groups that corresponds to additional assumptions in Theorems \ref{mtd2} and \ref{mtd3}. 

\begin{dfn}
Let $G$ be a finite group. A set $\cH$ of subgroups is said to be \emph{special} if 
\begin{itemize}
\item $G\notin \cH$; 
\item $\#\cH \geq 2$; and
\item $N^{G}(H)H'=G$ or $N^{G}(H')H=G$ for any $H,H'\in \cH$ with $H\neq H'$. 
\end{itemize}
\end{dfn}

By definition, a special set $\cH$ of subgroups of $G$ satisfies $\#\cH \geq 2$. 

\begin{lem}\label{lem:gnpi}
Let $G$ be a finite group, and $H$ a subgroup of $G$ and $N$ a normal subgroup of $G$. Assume $NH=G$ and that there is a prime number $p$ which divides $(G:H)$ and $(G:N)$. Take a $p$-Sylow subgroup $P$ of $G$. Assume that $g\in G$ satisfies $\ord_{p}(P:P\cap gHg^{-1})=\ord_{p}(G:H)$. Then one has an equality
\begin{equation*}
    (P\cap N)\cdot (P\cap gHg^{-1})=P. 
\end{equation*}
\end{lem}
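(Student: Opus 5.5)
The plan is a pure order count. I will show that the product set $(P\cap N)\cdot(P\cap gHg^{-1})$, which lies inside $P$, has at least $\#P$ elements. For a finite group $X$, write $\#X_{p}:=p^{\ord_{p}(\#X)}$ for the $p$-part of its order. Throughout I use the standard formula $\#(AB)=\#A\cdot\#B/\#(A\cap B)$ for subgroups $A,B$ of a finite group, where $AB$ denotes the product set.

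First I compute the orders of the two factors. Since $N$ is normal in $G$, the intersection $P\cap N$ is a $p$-Sylow subgroup of $N$, so $\#(P\cap N)=\#N_{p}$. The hypothesis $\ord_{p}(P:P\cap gHg^{-1})=\ord_{p}(G:H)$ gives
\begin{equation*}
\ord_{p}\#(P\cap gHg^{-1})=\ord_{p}\#P-\ord_{p}(G:H)=\ord_{p}\#H .
\end{equation*}
So $P\cap gHg^{-1}$ is a $p$-Sylow subgroup of $gHg^{-1}$, and $\#(P\cap gHg^{-1})=\#H_{p}$.

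Next I bound the intersection of the two factors. The group $P\cap N\cap gHg^{-1}$ is a $p$-subgroup of $N\cap gHg^{-1}=g(N\cap H)g^{-1}$, where the equality uses normality of $N$. Hence its order is at most $\#(N\cap H)_{p}$. Combining these with the product formula gives
\begin{equation*}
\#\bigl((P\cap N)(P\cap gHg^{-1})\bigr)\geq \frac{\#N_{p}\,\#H_{p}}{\#(N\cap H)_{p}}.
\end{equation*}
The right-hand side is the $p$-part of $\#N\cdot\#H/\#(N\cap H)=\#(NH)=\#G$, using $NH=G$. It therefore equals $\#G_{p}=\#P$.

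Since both factors lie in $P$, the product set is contained in $P$ and has at least $\#P$ elements, so it equals $P$. The only step needing care is the intersection bound, i.e. recognising that $N\cap gHg^{-1}$ is conjugate to $N\cap H$. The hypothesis that $p$ divides $(G:H)$ and $(G:N)$ does not appear to be needed in this argument.
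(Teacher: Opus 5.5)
Your proof is correct and is essentially the paper's argument: both are order/index counts that use $NH=G$, the fact that $P\cap N$ is Sylow in $N$, and the fact that the hypothesis makes $P\cap gHg^{-1}$ Sylow in $gHg^{-1}$. The only difference is that the paper obtains an exact index equality by also noting that $P\cap gHg^{-1}\cap N$ is Sylow in the normal subgroup $gHg^{-1}\cap N$ of $gHg^{-1}$, whereas you need only an upper bound on that intersection and then squeeze using that the product set lies in $P$ (and you are right that the divisibility hypothesis on $p$ is never used).
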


\begin{proof}
By the assumption on $g$, the group $gHg^{-1}$ contains $P\cap gHg^{-1}$ as its $p$-Sylow subgroup. Applying Lemma \ref{lem:psyl} for $gHg^{-1}$ and its normal subgroup $gHg^{-1}\cap N$, we obtain an equality
\begin{equation*}
    \ord_{p}(P\cap gHg^{-1}:P\cap gHg^{-1}\cap N)=\ord_{p}(gHg^{-1}:gH_{j}g^{-1}\cap N). 
\end{equation*}
On the other hand, $NH=G$ gives an equality
\begin{equation*}
    (gHg^{-1}:gHg^{-1}\cap N)=(G:N). 
\end{equation*}
Furthermore, one has $\ord_{p}(G:N)=\ord_{p}(P:P\cap N)$ by Lemma \ref{lem:psyl}. Consequently, we obtain an equality
\begin{equation*}
    (P\cap gHg^{-1}:P\cap gHg^{-1}\cap N)=(P:P\cap N), 
\end{equation*}
which gives the desired assertion. 
\end{proof}

\begin{prop}\label{prop:syad}
Let $G$ be a finite group, and $\cH$ a special set of its subgroups with $d_G(\cH)>1$. Take a $p$-Sylow subgroup $P$ of $G$, where $p$ is a prime divisor of $d_{G}(\cH)$. Then there is a special subset $\cH_{P}^{\ast}$ of $\cH_{P}^{\set}$ so that $\#\cH_{P}^{\ast}\geq \#\cH$. 
\end{prop}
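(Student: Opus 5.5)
For each $H\in\cH$ I will choose a single $g_H\in G$ with
\[
\ord_{p}(P:P\cap g_HHg_H^{-1})=\ord_{p}(G:H);
\]
such a $g_H$ exists by Lemma \ref{lem:psyl}. I then set $H_P:=P\cap g_HHg_H^{-1}$, which belongs to $\cH_P^{\set}$ after replacing $g_H$ by the representative of its double coset in $C_H$. This is harmless: the index $(P:P\cap gHg^{-1})$ depends only on the double coset $PgH$, and the resulting subgroups are $P$-conjugate. The candidate is $\cH_P^\ast:=\{H_P\mid H\in\cH\}$, and I will show it is special with $\#\cH_P^\ast=\#\cH$.

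First, $P\notin\cH_P^\ast$. Since $p\mid d_G(\cH)$, we have $\ord_p(G:H)\geq 1$, hence $(P:H_P)>1$.

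Next comes the key group-theoretic step. Take $H\neq H'$ in $\cH$; since $\cH$ is special, say $N^{G}(H)H'=G$. Put $N:=N^{G}(H)$, which is normal and contained in $H$. Then $p$ divides $(G:H')$, and it divides $(G:N)$ because $(G:N)$ is a multiple of $(G:H)$. So Lemma \ref{lem:gnpi}, applied to $N$, $H'$ and $g_{H'}$, gives
\[
(P\cap N)\,H'_P=P.
\]
Now $P\cap N$ is normal in $P$ and contained in $P\cap g_HHg_H^{-1}=H_P$, because $N$ is normal and $N\subset H$. Hence $P\cap N\subset N^{P}(H_P)$, and therefore $N^{P}(H_P)H'_P=P$. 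This is exactly the speciality condition for the pair $H_P,H'_P$.

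Finally, the key step also gives injectivity of $H\mapsto H_P$, so that $\#\cH_P^\ast=\#\cH\geq 2$. Suppose $H_P=H'_P$ with $H\neq H'$. Then $N^{P}(H_P)\subset H'_P=H_P$, so $P=N^{P}(H_P)H'_P=H_P$, contradicting $(P:H_P)>1$.

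The main obstacle I expect is the bookkeeping that $H_P$ genuinely lies in $\cH_P^{\set}$ when the representatives $C_H$ are fixed in advance. If a chosen representative gives only a $P$-conjugate $xH_Px^{-1}$ with $x\in P$, the properties used above still hold: $N^{P}$ is conjugation invariant, and $(P\cap N)$ is normal in $P$ and contained in every $P$-conjugate of $H_P$. The rest is formal given Lemma \ref{lem:gnpi}.
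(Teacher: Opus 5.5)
Your proof is correct and follows the paper's argument: you pick conjugates realizing $\ord_{p}(G:H)$ via Lemma \ref{lem:psyl}, apply Lemma \ref{lem:gnpi} with $N=N^{G}(H)$, and use the inclusion $P\cap N^{G}(H)\subset N^{P}(H_{P})$. Your choice of a single conjugate per $H$ (taken in $C_{H}$), together with your injectivity check, is a genuine improvement over the paper's write-up. The paper's set of all $P\cap gH_{i}g^{-1}$ with $g\in S_{i}$ need not lie in $\cH_{P}^{\set}$, and it can fail to be special for two conjugates of the same $H_{i}$ (for example $G=A_{4}\times\langle z\rangle$, $\cH=\{\langle((12)(34),z)\rangle,A_{4}\times 1\}$, $p=2$: in the abelian $P=V_{4}\times\langle z\rangle$, any two of the three conjugates of the first group generate only a subgroup of order $4$).
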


\begin{proof}
By the assumption on $p$ and Lemma \ref{lem:psyl}, the set $\cH_{P}^{\set}$ does not contain $P$. Write $\cH=\{H_{1},\ldots,H_{r}\}$ where $r\geq 2$. For each $i\in \{1,\ldots,r\}$, let $S_{i}$ be the subset of $G$ consisting of all $g$ satisfying
\begin{equation*}
    \ord_{p}(P:P\cap gH_{i}g^{-1})=\ord_{p}(G:H_{i}). 
\end{equation*}
Note that $S_{i}$ is non-empty by Lemma \ref{lem:psyl}. We prove that the subset
\begin{equation*}
    \cH_{P}^{\ast}:=\{P\cap gH_{i}g^{-1}\mid i\in \{1,\ldots,r\},g\in S_{i}\}
\end{equation*}
of $\cH_{P}^{\set}$ is the desired one. Let $i,j\in \{1,\ldots,r\}$ be with $i\neq j$. We may assume 
\begin{equation}\label{eq:ijgn}
    N^{G}(H_{i})H_{j}=G. 
\end{equation}
It suffices to prove an equality
\begin{equation}\label{eq:ijpg}
    N^{P}(P\cap g_{i}H_{i}g_{i}^{-1})\cdot g_{j}H_{j}g_{j}^{-1}=P
\end{equation}
for any $g_{i}\in S_{i}$ and $g_{j}\in S_{j}$. However, this follows from Lemma \ref{lem:gnpi} since $N^{P}(P\cap g_{i}H_{i}g_{i}^{-1})$ contains $N^{G}(H_{i})$. 
\end{proof}

\begin{lem}\label{lem:psrd}
Let $G$ be a finite group, and $\cH$ be a multiset of its subgroups which does not contain $G$. 
\begin{enumerate}
\item If $\cH$ is a special set, then it is reduced. 
\item Assume that there is a subset $\cH_{0}$ of $\cH^{\set}$ which is special. Then there is a special subset $\cH'_{0}$ of $\cH^{\red}$ such that $\#\cH'_{0}\geq \#\cH_{0}$. 
\end{enumerate}
\end{lem}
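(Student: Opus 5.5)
Part (i) follows from a direct check against the definition. Let $\cH$ be special and suppose $H\subsetneq H'$ for some $H,H'\in\cH$. Since $\cH$ is special, either $N^{G}(H)H'=G$ or $N^{G}(H')H=G$. We have $N^{G}(H)\subset H\subset H'$, so the first alternative gives $N^{G}(H)H'=H'$. Likewise $N^{G}(H')\subset H'$ and $H\subset H'$, so the second gives $N^{G}(H')H\subset H'$. In either case we get $H'=G$, which is impossible because $G\notin\cH$. Hence $\cH$ is reduced.

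For part (ii), we build $\cH'_{0}$ by replacing each member of $\cH_{0}$ with a maximal element lying above it. For each $H\in\cH_{0}$, pick some $\widetilde{H}\in\cH^{\red}$ with $H\subset\widetilde{H}$; such an element exists because $\cH^{\set}$ is finite. Set $\cH'_{0}:=\{\widetilde{H}\mid H\in\cH_{0}\}$.

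The main point is that the specialness condition passes upward along inclusions. Suppose $H\subset\widetilde{H}$ and $H'\subset\widetilde{H'}$, and that $N^{G}(H)H'=G$. Then $N^{G}(H)\subset N^{G}(\widetilde{H})$, since $N^{G}(H)$ is a normal subgroup contained in $\widetilde{H}$ and $N^{G}(\widetilde{H})$ is the largest such. Also $H'\subset\widetilde{H'}$, so $N^{G}(\widetilde{H})\widetilde{H'}\supset N^{G}(H)H'=G$. The other alternative is handled symmetrically. In addition, no $\widetilde{H}$ equals $G$, because $G\notin\cH$. So any two distinct elements of $\cH'_{0}$ satisfy the required condition.

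The step that needs care is the cardinality bound $\#\cH'_{0}\geq\#\cH_{0}$, which requires the map $H\mapsto\widetilde{H}$ to be injective. Suppose $H_{1}\neq H_{2}$ in $\cH_{0}$ both lie in the same $\widetilde{H}$. Specialness of $\cH_{0}$ gives, say, $N^{G}(H_{1})H_{2}=G$. But $N^{G}(H_{1})\subset H_{1}\subset\widetilde{H}$ and $H_{2}\subset\widetilde{H}$, so $G=N^{G}(H_{1})H_{2}\subset\widetilde{H}$. This forces $\widetilde{H}=G$, a contradiction. Thus the map is injective, so $\#\cH'_{0}=\#\cH_{0}\geq 2$, and $\cH'_{0}$ is a special subset of $\cH^{\red}$.
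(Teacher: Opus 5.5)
Your proof is correct and follows essentially the paper's argument: specialness passes up to overgroups because $N^{G}(H)\subset N^{G}(\widetilde{H})$, and two distinct members of $\cH_{0}$ cannot both lie in a proper subgroup $\widetilde{H}$. The differences are cosmetic. You choose one maximal overgroup per member and prove that this choice is injective, whereas the paper takes all maximal overgroups and shows each one lies above a unique member of $\cH_{0}$. Your contradiction $G=N^{G}(H_{1})H_{2}\subset\widetilde{H}$ is also stated more precisely than the paper's ``$G=H_{0}H'_{0}$''.
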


\begin{proof}
(i): This follows from the definition that $\cH$ is special. 

(ii): Let $\cH'_{0}$ be the subset of $\cH^{\red}$ consisting of all $H'$ satisfying $H \subset H'$ for some $H \in \cH_{0}$. Take $H_{0},H'_{0}\in \cH'_{0}$, and take $H,H'\in \cH_{0}$ satisfying $H\subset H_{0}$ and $H'\subset H'_{0}$. We may assume $N^{G}(H)H'=G$. Then one has $N^{G}(H_{0})H'_{0}=G$, and hence $\cH'_{0}$ is special. For the inequality $\#\cH'_{0}\geq \#\cH_{0}$, it suffices to prove that there is a unique $H_{0}\in \cH_{0}$ satisfying $H_{0}\subset H$ for each $H\in \cH'_{0}$. Suppose not, then there exist $H\in \cH'_{0}$ and $H_{0},H'_{0}\in \cH_{0}$ such that $H_{0}\subset H$ and $H'_{0}\subset H$. Hence $G=H_{0}H'_{0}\subset H$, which is absurd since $G\notin \cH$. This completes the proof. 
\end{proof}

Now, we can prove Theorems \ref{mtd2} and \ref{mtd3}. 

\begin{thm}\label{thm:gcdt}
Let $G$ be a finite group, and $\cH$ a reduced set of its subgroups that satisfies $\#\cH \geq 2$ and $d_{G}(\cH)=2$. 
\begin{enumerate}
    \item If $\#\cH=2$, we further assume that $\cH$ is special and $(G:H)\in 4\Z$ for some $H\in \cH$. Then the $G$-lattice $J_{G/\cH}$ is not quasi-invertible. 
    \item If $\#\cH \geq 3$, we further assume that there is a special subset $\cH_{0}$ of $\cH$ with $\#\cH_{0}\geq 3$. Then the $G$-lattice $J_{G/\cH}$ is not quasi-invertible. 
\end{enumerate}
\end{thm}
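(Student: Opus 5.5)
We reduce to the $2$-Sylow subgroup and then invoke the classification of Proposition \ref{prop:tgqp}. Let $P$ be a $2$-Sylow subgroup of $G$; since $d_{G}(\cH)=2$, the prime $2$ is the only prime divisor of $d_{G}(\cH)$. By Theorem \ref{thm:qips}, (i)$\Leftrightarrow$(v), it suffices to show that $J_{P/\cH_{P}^{\srd}}$ is not quasi-invertible for some (equivalently any) choice of $\cH_{P}^{\srd}$. Proposition \ref{prop:tgqp} says this holds as soon as $\cH_{P}^{\srd}$ fails both alternatives there: it has at least two elements, and it is not the pair of reflection subgroups in a dihedral quotient.

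First we produce many members of $\cH_{P}^{\srd}$. Since $\cH$ is special (case (i)) or contains a special $\cH_{0}$ (case (ii)), Proposition \ref{prop:syad}, applied to $\cH$ or to $\cH_{0}$, gives a special subset $\cH_{P}^{\ast}\subset\cH_{P}^{\set}$ with $\#\cH_{P}^{\ast}\geq 2$, respectively $\geq 3$. Lemma \ref{lem:psrd}(ii) then transports it to a special subset of $\cH_{P}^{\red}$ of at least the same cardinality; this needs $P\notin\cH_{P}$, which follows from Lemma \ref{lem:psyl} because $2\mid (G:H)$ for all $H$.

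Next we pass from $\cH_{P}^{\red}$ to $\cH_{P}^{\srd}$. Two members $A,B$ of a special set are never $P$-conjugate: if $B=gAg^{-1}$, then $N^{P}(A)\subset B$, so $N^{P}(A)B=B\neq P$, and symmetrically, contradicting specialness. Nor is one contained in a conjugate of the other: $A\subset gBg^{-1}$ would give $N^{P}(B)A\subset gBg^{-1}\neq P$, and similarly in the other direction. Hence the special subset sits inside a strongly reduced subset, and we may choose $\cH_{P}^{\srd}$ to contain it. In case (ii) this gives $\#\cH_{P}^{\srd}\geq 3$, which excludes both alternatives of Proposition \ref{prop:tgqp}, so we are done.

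Case (i) is the main obstacle, since $\cH_{P}^{\srd}$ may have exactly two elements $A,B$, and we must rule out the dihedral alternative (ii) of Proposition \ref{prop:tgqp}. In that alternative the index of each member in $P$ equals $(D_{2^{\nu}}:\langle\text{reflection}\rangle)=2^{\nu}$ with the same value for both. Here we use the hypothesis $4\mid(G:H)$ for some $H\in\cH$, say $H_{1}$, while $(G:H_{2})\equiv 2 \pmod 4$ because $d_{G}(\cH)=2$. Corollary \ref{cor:hppo} and Lemma \ref{lem:psyl} give $\ord_{2}(P:A)=\ord_{2}(G:H_{1})\geq 2$ and $\ord_{2}(P:B)=1$ for the elements chosen via the sets $S_{i}$ of Proposition \ref{prop:syad}. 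Thus the two indices differ, whereas in the dihedral configuration they coincide, so that configuration is impossible. The remaining care is to check that passing to $\cH_{P}^{\red}$ (replacing by overgroups) and then to $\cH_{P}^{\srd}$ does not change the $2$-adic indices. This holds because $\ord_{2}\mu(\cH_{P}^{\red})=1$ by Corollary \ref{cor:hppo}, so the element of index $2$ survives; and if the element of index divisible by $4$ were replaced by an overgroup of index $2$, that overgroup would be normal in $P$ and would contain both members, contradicting specialness as in Lemma \ref{lem:psrd}.
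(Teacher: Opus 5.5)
Your strategy matches the paper's. You reduce to a $2$-Sylow subgroup $P$ via Theorem \ref{thm:qips} and transport specialness to $P$ with Proposition \ref{prop:syad} and Lemma \ref{lem:psrd}. You then exclude the two alternatives of Proposition \ref{prop:tgqp}: in (ii) by cardinality, and in (i) by an index mismatch (a member of index $2$ alongside one of index $\geq 4$). Part (ii) is fine. Your observation that a special set is automatically strongly reduced actually supplies a step the paper skips, since the paper applies Proposition \ref{prop:tgqp} directly to $\cH_{P}^{\red}$.

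In (i), however, the last sentence does not work as written. Suppose $A=P\cap g_{1}H_{1}g_{1}^{-1}$ with $g_{1}\in S_{1}$ were contained in some $C\in\cH_{P}^{\set}$ with $(P:C)=2$. Nothing you have said forces $C$ to contain or equal $B$. If $C\neq B$, then $\{C,B\}$ is still special, because two distinct index-$2$ subgroups generate $P$. A pair of that shape is exactly alternative (ii) of Proposition \ref{prop:tgqp} with $\nu=1$, which is quasi-permutation. So this step is the crux of (i), not a formality.

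The correct reason is that $C$ is itself one of the subgroups produced by $S_{2}$. By Lemma \ref{lem:psyl}, every $P\cap gH_{1}g^{-1}$ has index at least $2^{\ord_{2}(G:H_{1})}\geq 4$, so $C=P\cap gH_{2}g^{-1}$ with $g\in S_{2}$. Lemma \ref{lem:gnpi} then gives one of two equalities:
\begin{itemize}
\item $(P\cap N^{G}(H_{1}))\cdot C=P$ if $N^{G}(H_{1})H_{2}=G$;
\item $(P\cap N^{G}(H_{2}))\cdot A=P$ if $N^{G}(H_{2})H_{1}=G$.
\end{itemize}
Since $P\cap N^{G}(H_{1})\subset A\subset C$ and $P\cap N^{G}(H_{2})\subset C$, either equality forces $C=P$, a contradiction. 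This two-case application of Lemma \ref{lem:gnpi} is how the paper shows $M(\cH_{P}^{\red})\geq 4$. With it in place of your final sentence, the argument goes through.
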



\begin{proof}
Let $P$ be a $2$-Sylow subgroup of $G$. Then Corollary \ref{cor:hppo} gives an equality $\mu(\cH^{\red})=2$. In particular, $(\cH_{P}^{\red})^{\nor}$ is non-empty. 

(i): Let $\cH=\{H,H'\}$, which satisfies $N^{G}(H)H'=G$. We claim $M(\cH_{P}^{\red})\geq 4$. Firstly, assume $\ord_{2}(G:H)=1$, which implies $\ord_{2}(G:H')\geq 2$. Then there exists $g_{0}\in G$ satisfying
\begin{itemize}
\item $(P:P\cap g_{0}H'g_{0}^{-1})\leq (P:P\cap gH'g^{-1})$ for any $g\in G$; and 
\item $(P\cap N^{G}(H))\cdot (P\cap g_{0}Hg_{0}^{-1})=P$. 
\end{itemize}
for any $g\in G$. In particular, every $g\in G$ satisfies $P\cap g_{0}Hg_{0}^{-1}\not\subset P\cap gHg^{-1}$ since $P\cap gHg^{-1}$ contains $P\cap N^{G}(H)$. On the other hand, all elements of $\cH_{P}^{\set}$ are of the form $P\cap gHg^{-1}$ or $P\cap gH'g^{-1}$ where $g\in G$. Hence $P\cap g_{0}Hg_{0}^{-1}$ lies in $\cH^{\red}$, and hence $M(\cH_{P}^{\red})\geq 4$ holds. Secondly, suppose $\ord_{2}(G:H')=1$, which follows $\ord_{2}(G:H)\geq 2$. Put $S:=\{g\in G\mid (P:P\cap gH'g^{-1})=2\}$, which is non-empty by Lemma \ref{lem:psyl}. Then we have $(P\cap N^{G}(H))\cdot (P\cap gHg^{-1})=P$ for any $g\in S$. In particular, if $g_{1}\in G$ satisfies $\ord_{p}(P:P\cap g_{1}Hg_{1}^{-1})=\ord_{p}(G:H)$, then $P\cap g_{1}Hg_{1}^{-1}$ is not contained in $P\cap gH'g^{-1}$ for all $g\in S$. On the other hand, by the definition of $\cH_{P}$, all elements $H''$ of $\cH_{P}$ with $(P:H'')=2$ is of the form $P\cap gH'g^{-1}$ where $g\in G$. Therefore, we obtain $M(\cH_{P}^{\red})\geq 4$ as desired. 

Now we can apply Proposition \ref{prop:tgqp}, and hence the $P$-lattice $J_{P/\cH_{P}^{\red}}$ is not quasi-invertible. Consequently, the assertion follows from Theorem \ref{thm:qips}. 

(ii): By Proposition \ref{prop:syad} and Lemma \ref{lem:psrd}, we can take a special subset $\cH_{P,0}^{\red}$ of $\cH_{P}^{\red}$ with $\#\cH_{P,0}^{\red}\geq 3$. In particular, we have $\#\cH_{P}^{\red}\geq 3$. Hence, Proposition \ref{prop:tgqp} follows that the $P$-lattice $J_{P/\cH_{P}^{\red}}$ is not quasi-invertible. Therefore, Theorem \ref{thm:qips} gives the desired assertion. 
\end{proof}

\begin{thm}\label{thm:gcdb}
Let $G$ be a finite group, and $\cH$ a reduced set of its subgroups satisfying $\#\cH \geq 2$ and $d_{G}(\cH)\geq 3$. We further assume that there is a special subset $\cH'$ of $\cH$ with $\#\cH'=2$. Then the $G$-lattice $J_{G/\cH}$ is not quasi-invertible. 
\end{thm}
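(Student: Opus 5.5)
Since $d_{G}(\cH)\geq 3$, we can choose a prime $p$ dividing $d_{G}(\cH)$ with either $p$ odd, or $p=2$ and $\ord_{2}(d_{G}(\cH))\geq 2$. (If $d_{G}(\cH)$ has no odd prime divisor, it is a power of $2$ that is at least $4$.) Let $P$ be a $p$-Sylow subgroup of $G$. By the implication (i) $\Rightarrow$ (iv)/(v) in Theorem \ref{thm:qips}, it suffices to show that $J_{P/\cH_{P}^{\red}}$ (respectively $J_{P/\cH_{P}^{\srd}}$) is not a quasi-invertible $P$-lattice.

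The first step is to produce at least two incomparable members of $\cH_{P}^{\red}$. Write $\cH'=\{H,H'\}$ with $N^{G}(H)H'=G$ or $N^{G}(H')H=G$. Proposition \ref{prop:syad}, applied to the special set $\cH'$ (which has $d_{G}(\cH')>1$ divisible by $p$), gives a special subset of $\cH'_{P}{}^{\set}\subset\cH_{P}^{\set}$ with at least $2$ elements. Lemma \ref{lem:psrd}(ii) then upgrades this to a special subset of $\cH_{P}^{\red}$ with at least $2$ elements; note $P\notin\cH_{P}$ by Lemma \ref{lem:psyl}, since $p\mid(G:H)$ for all $H\in\cH$. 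Hence $\#\cH_{P}^{\red}\geq 2$.

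For $p$ odd we finish with Proposition \ref{prop:odqp}: since $\cH_{P}^{\red}$ is reduced with more than one element, $J_{P/\cH_{P}^{\red}}$ is not quasi-invertible.

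For $p=2$ we use Proposition \ref{prop:tgqp}, which requires a strongly reduced set, so we pass to $\cH_{P}^{\srd}$. The special property is stronger than mere incomparability: if $N^{P}(K)K'=P$ with $K\neq K'$ and $K,K'\neq P$, then $K'\not\subset gKg^{-1}$ for all $g\in P$, because $gKg^{-1}\supset N^{P}(K)$. Also $K\not\subset gK'g^{-1}$ for all $g$, since otherwise $K'$ and $gK'g^{-1}$ would together contain $N^{P}(K)K'=P$ after conjugation, giving $gK'g^{-1}\supseteq K'$ up to the coset bookkeeping. So the two special elements are non-conjugate and mutually non-subconjugate, and a maximal strongly reduced subset can be chosen to contain both, giving $\#\cH_{P}^{\srd}\geq 2$. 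It remains to exclude case (ii) of Proposition \ref{prop:tgqp}. In that case both members are $\langle\sigma^{a}\tau\rangle$ of order $2$ in $D_{2^{\nu}}$ modulo the core, hence of index $2$ in $P/N^{P}$ only when $\nu=1$; in general they contain no index-$2$-in-$P$ information. The cleaner route is via indices: by Corollary \ref{cor:hppo}, $\ord_{2}(P:K)\geq\ord_{2}d_{G}(\cH)\geq 2$ for every $K\in\cH_{P}$. But for $\cH=\{\langle\sigma^{2m}\tau\rangle,\langle\sigma^{2m'+1}\tau\rangle\}$ in $D_{2^{\nu}}$, the special condition forces $N^{P}(K)K'=P$, and the core of $\langle\sigma^{a}\tau\rangle$ in $P/N^{P}(\cH)$ is trivial for $\nu\geq 2$. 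Thus $P=N^{P}(\cH)\cdot K'$ would make $P/N^{P}(\cH)$ of order $2$, contradicting $\nu\geq 2$. For $\nu=1$ the subgroups have index $2$, contradicting the index bound $\geq 4$. Hence neither case of Proposition \ref{prop:tgqp} holds, and $J_{P/\cH_{P}^{\srd}}$ is not quasi-invertible.

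The main obstacle is this $p=2$ step: making sure the special pair survives in $\cH_{P}^{\srd}$, and that specialness together with $4\mid$ every index rules out the dihedral exception. I would check the non-subconjugacy argument carefully first, since the sketch above glosses over it.
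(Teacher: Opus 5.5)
Your proposal is correct and follows the paper's own proof. It uses the same choice of $p$, Proposition \ref{prop:syad} with Lemma \ref{lem:psrd} to get a special pair in $\cH_{P}^{\red}$, Proposition \ref{prop:odqp} for odd $p$, and for $p=2$ Proposition \ref{prop:tgqp} together with the bound $4\mid (P:K)$ from Corollary \ref{cor:hppo}. Your explicit exclusion of the dihedral case (trivial cores when $\nu\geq 2$, index $2$ when $\nu=1$) is what the paper compresses into its appeal to ``property (D)''. Working with $\cH_{P}^{\srd}$ rather than $\cH_{P}^{\red}$ is also the more careful way to invoke Proposition \ref{prop:tgqp}.

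The step you flagged has a one-line fix. If $K\subset gK'g^{-1}$, then the normal subgroup $N^{P}(K)\subset K$ also lies in $K'$, so $P=N^{P}(K)K'=K'$, a contradiction.

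One further point needs the same argument. A maximal strongly reduced set containing $K,K'$ need not have every member of $\cH_{P}$ subconjugate to one of its elements, and Proposition \ref{prop:rdsr} needs that. Replace $K,K'$ by subconjugacy-maximal overgroups in $\cH_{P}^{\set}$; the same core argument shows the new pair is still special and non-conjugate, so it can legitimately be taken inside $\cH_{P}^{\srd}$.
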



\begin{proof}
Put $\cH_{0}:=\{H,H'\}$, where $H$ and $H'$ are elements of $\cH$ satisfying $N^{G}(H)H'=G$. Then it is special by assumption. Now, pick a prime divisor $p$ of $d$, which assumes $p>2$ if $d$ is not a power of $2$. Fix a $p$-Sylow subgroup $P$ of $G$, then Proposition \ref{prop:syad} implies that $\cH_{0,P}^{\set}$ admits its special subset $\cH_{0,P}^{\ast}$ satisfying $\#\cH_{0,P}^{\ast}\geq 2$ and 
\begin{equation*}
    \ord_{p}(M(\cH_{0,P}^{\ast}))\geq \max\{\ord_{p}(G:H),\ord_{p}(G:H')\}. 
\end{equation*}
Furthermore, by Proposition \ref{prop:syad}, we can take a special subset $\cH_{P,0}^{\red}$ of $\cH_{P}^{\red}$ satisfying $\#\cH_{P,0}^{\red}\geq 2$. 

\textbf{Case 1.~$p>2$ ($d$ is not a power of $2$).}
In this case, we have $\#\cH_{P}^{\red}\geq 2$. Hence, Proposition \ref{prop:odqp} gives that the $P$-module $J_{P/\cH_{P}^{\red}}$ is not quasi-invertible. This implies the desired assertion using Theorem \ref{thm:qips}. 

\textbf{Case 2.~$p=2$ ($d$ is a power of $2$). }
Since $d>2$, Corollary \ref{cor:hppo} implies $M(\cH_{P}^{\red})\geq \mu(\cH_{P}^{\red})\geq 4$. Moreover, for any $H,H'\in \cH_{P,0}^{\red}$ with $H\neq H'$, at least one of the ordered pairs $(H,H')$ or $(H',H)$ has property (D). Therefore, the $P$-module $J_{P/\cH_{P}^{\red}}$ is not quasi-invertible by Proposition \ref{prop:tgqp}. Now, the assertion follows from Theorem \ref{thm:qips}. 
\end{proof}

\begin{proof}[Proof of Theorem \ref{mtd2}]
Take a finite Galois field extension $L$ of $k$ that contains $K_{1},\ldots,K_{r}$. Put $G:=\Gal(L/k)$, $H_{i}:=\Gal(L/K_{i})$ for each $i\in \{1,\ldots,r\}$, and $\cH:=\{H_{i}<G\mid i\in \{1,\ldots,r\}\}$. By Proposition \ref{prop:coch}, it suffices to prove that $J_{G/\cH}$ is not quasi-invertible. 

(i): By assumption, $\cH$ is a special set and $(G:H)\in 4\Z$ for some $H\in \cH$. In particular, Lemma \ref{lem:psrd} (i) implies that $\cH$ is reduced. Hence, the assertion follows from Theorem \ref{thm:gcdt} (i). 

(ii): Take a subset $I$ of $i\in \{1,\ldots,r\}$ with $\#I=3$ such that $L_{i}\cap K_{j}=k$ or $L_{j}\cap K_{i}=k$ for any $i,j\in I$ with $i\neq j$. Set $\cH_{0}:=\{H_{i}<G\mid i\in I\}$, then it is a special subset of $\cH^{\set}$. Combining this with Lemma \ref{lem:psrd} (ii), we obtain that $\cH^{\red}$ admits its special subset $\cH'$ with $\#\cH'\geq 3$. On the other hand, we have $[J_{G/\cH}]=[J_{G/\cH^{\red}}]$ by the virtue of Proposition \ref{prop:rdrd}. Consequently, Theorem \ref{thm:gcdt} (ii) gives the desired assertion. 
\end{proof}

\begin{proof}[Proof of Theorem \ref{mtd3}]
The proof is similar to Theorem \ref{mtd2} (ii), except we use Theorem \ref{thm:gcdb} instead of Theorem \ref{thm:gcdt} (ii). 
\end{proof}

In the following, we discuss what happens if we relax the assumptions in Theorems \ref{thm:gcdt} and \ref{thm:gcdb}. 

\begin{ex}[{The case $d_{G}(\cH)=2$}]
The existence of a quasi-permutation $G$-lattice $J_{G/\cH}$ with $\#\cH=2$ that violates the assumption in Theorem \ref{thm:gcdt} (i) is given by Endo (\cite{Endo2001}). See also Proposition \ref{prop:tgqp}. In the following, we construct a quasi-invertible $G$-lattice $J_{G/\cH}$ such that $\#\cH=3$ and the assumption in Theorem \ref{thm:gcdt} (ii) fails. Let $p$ and $\ell$ be odd prime numbers that are distinct from each other. Set 
\begin{equation*}
    G:=C_{2p\ell} \times C_{2}=\langle \sigma,\tau \mid \sigma^{2p\ell}=\tau^{2}=1,\sigma\tau=\tau\sigma\rangle
\end{equation*}
and define its subgroups as follows: 
\begin{equation*}
    H_{1}:=\langle \sigma^{p} \rangle,\quad H_{2}:=\langle\sigma^{\ell} \rangle,\quad
    H_{3}:=\langle \sigma^{2}, \tau \rangle. 
\end{equation*}
Put $\cH:=\{H_{1},H_{2},H_{3}\}$. Then we have $d_{G}(\cH)=2$. Moreover, the following hold: 
\begin{equation*}
    H_{1}H_{3}=H_{2}H_{3}=G,\quad H_{1}H_{2}=\langle \sigma \rangle\neq G,\quad (H_{1}\cap H_{2})H_{3}=G. 
\end{equation*}
Note that the left and central equations imply that $\cH$ is not special. In this case, the $G$-lattice $J_{G/\cH}$ is quasi-permutation, which can be confirmed as follows. Put $G_{p}:=\langle \sigma^{2\ell}\rangle$, $G_{\ell}:=\langle \sigma^{2p}\rangle$ and $G_{2}:=\langle \sigma^{p\ell},\tau \rangle$. Then we have $G=G_{p}\times G_{\ell} \times G_{2}$. Moreover, the following hold: 
\begin{equation*}
    \cH_{G_{p}}^{\red}=\{G_{p}\},\quad \cH_{G_{\ell}}^{\red}=\{G_{\ell}\},\quad \cH_{G_{p}}^{\red}=\{\langle \sigma^{p\ell}\rangle,\langle \tau \rangle\}. 
\end{equation*}
Applying \cite[Lemma 9.1]{Hasegawa}, we obtain an isomorphism of $G$-lattices
\begin{equation*}
    J_{G/\cH}\oplus R_{1}\cong J_{G/\cH'}\oplus R_{2}, 
\end{equation*}
where $R_{1}$ and $R_{2}$ are quasi-permutation $G$-lattices, and $\cH'$ is a reduced set satisfying $(\cH')_{G_{\ell'}}^{\set}=\cH_{G_{\ell'}}^{\red}$ for any $\ell'\in \{2,p,\ell\}$. Then one has an equality
\begin{equation*}
    \cH'=\{G_{p}\times G_{\ell}\times \langle \sigma^{p\ell}\rangle, G_{p}\times G_{\ell}\times \langle \tau \rangle\}. 
\end{equation*}
In particular, we have $G/N^{G}(\cH')\cong D_{2}$ and $\#\cH'=2$. Hence, \cite[Theorem 9.2]{Hasegawa} implies that $J_{G/\cH}$ is quasi-permutation. 
\end{ex}

\begin{ex}[{The case $d_{G}(\cH)\geq 3$}]
Let $p$ be a prime number. Take prime numbers $\ell_{1}$ and $\ell_{2}$ so that $\ell_{1}\equiv 1\bmod p$ and $\ell_{2}\notin \{p,\ell_{1}\}$. Pick an element $m\in (\Z/\ell_{1})^{\times}$ of order $p$, and set $G:=\langle \sigma_{1},\sigma_{2},\tau_{1},\tau_{2}\rangle$, where
\begin{equation*}
    \sigma_{1}^{\ell_{1}}=\sigma_{2}^{\ell_{2}}=\tau_{1}^{p}=\tau_{2}^{p}=1,\quad \sigma_{1}\sigma_{2}=\sigma_{2}\sigma_{1},\quad \tau_{1}\sigma_{1}\tau_{1}=\sigma^{m},\quad \tau_{j}\sigma_{i}=\sigma_{i}\tau_{j}\text{ for }(i,j)\neq (1,1). 
\end{equation*}
Then there is an isomorphism $G\cong (C_{\ell_{1}\ell_{2}}\rtimes C_{p})\times C_{p}$. Put $\cH:=\{H_{1},H_{2}\}$, where
\begin{equation*}
    H_{1}:=\langle \sigma_{1}, \tau_{1}\rangle,\quad 
    H_{2}:=\langle \sigma_{2}, \tau_{1}\rangle. 
\end{equation*}
Then, $H_{1}$ and $H_{1}H_{2}$ are normal in $G$. Moreover, one has $d=p$ and $(G:H_{1}H_{2})=p$. In particular, there is no special subset of $\cH$. In this case, the $G$-lattice $J_{G/\cH}$ is quasi-invertible. Indeed, let $P:=\langle \tau_{1},\tau_{2}\rangle$, which is a $p$-Sylow subgroup of $G$. Then the set $\cH_{P}^{\red}$ consists of $\langle \tau_{1} \rangle$ since it coincides with $P\cap H_{1}$ and $P\cap H_{2}$. Hence, we obtain an isomorphism of $P$-modules $J_{P/\cH_{P}^{\red}}\cong J_{P/\langle \tau_{1}\rangle}$, which is quasi-permutation by Proposition \ref{prop:edgp}. Combining this result with Theorem \ref{thm:qips}, we obtain that $J_{G/\cH}$ is quasi-invertible as desired. 
\end{ex}

\section{Proof of Theorem \ref{mth1}}\label{sect:pfzg}

We first recall previous results on Endo--Miyata (\cite{Endo1975}) and Endo (\cite{Endo2011}) using the terminology in Section \ref{sect:rvrt}. 

\begin{prop}[{\cite[Theorem 1.5]{Endo1975}}]\label{prop:emqi}
Let $G$ be a finite group. Then the following are equivalent: 
\begin{enumerate}
\item all Sylow subgroups of $G$ are cyclic; 
\item all coflabby $G$-lattices are invertible; 
\item all $G$-lattices are quasi-invertible; 
\end{enumerate}
\end{prop}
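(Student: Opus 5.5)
The plan is to prove the equivalences in the order (ii) $\Leftrightarrow$ (iii), then (i) $\Rightarrow$ (ii), then (ii) $\Rightarrow$ (i). Throughout I use the characterization that a $G$-lattice $M$ is quasi-invertible if and only if $[M]^{\fl}$ is invertible in $\sS(G)$. One direction is immediate from the definitions; the other is one of the facts I would check along the way.

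\textbf{Step 1: (ii) $\Leftrightarrow$ (iii).} Assume (ii). For an arbitrary $M$, take a flabby resolution $0\rightarrow M\rightarrow R\rightarrow F\rightarrow 0$. Then $F^{\circ}$ is coflabby, hence invertible by (ii), so $F$ is invertible and $M$ is quasi-invertible.

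Conversely, assume (iii), and let $C$ be coflabby. It suffices to show that the flabby lattice $F':=C^{\circ}$ is invertible. Since $F'$ is quasi-invertible, after adding a permutation lattice it fits into a flabby resolution $0\rightarrow F'\rightarrow R\rightarrow F''\rightarrow 0$ with $F''$ invertible. The standard vanishing $\mathrm{Ext}^{1}_{G}(F'',F')=0$ (flabby against invertible, via Shapiro's lemma for the permutation summands) makes this sequence split. Hence $F'$ is a direct summand of $R$, so it is invertible.

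\textbf{Step 2: (i) $\Rightarrow$ (ii).} I first reduce to Sylow subgroups. A $G$-lattice is invertible as soon as its restriction to every Sylow subgroup is invertible; the argument is a transfer/averaging argument, since $\mathrm{Ext}^{1}$ into the relevant lattices is killed by the index. Coflabbiness is clearly inherited by subgroups. It therefore suffices to treat a cyclic $p$-group $G$.

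For cyclic $G$, periodicity of Tate cohomology gives $\hat H^{-1}(H,M)\cong H^{1}(H,M)$ for every subgroup $H$. So a coflabby $M$ is also flabby, i.e.\ both flabby and coflabby. The remaining input is the theorem that for a cyclic $p$-group every flabby and coflabby lattice is invertible. I would prove this by induction on $\#G$, using the fixed-point functor of Lemma \ref{lem:qtfx} on the subgroup of order $p$ together with the Diederichsen--Reiner classification of $\Z C_{p}$-lattices. This is the step I expect to be the main obstacle, because it is the only place where genuine integral representation theory enters.

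\textbf{Step 3: (ii) $\Rightarrow$ (i).} I argue by contraposition. If some Sylow subgroup is non-cyclic, then $G$ contains a subgroup $E\cong C_{p}\times C_{p}$. For $E$ there is a coflabby lattice that is not invertible. For instance, take the dual of $[J_{E}]^{\fl}$, where $J_{E}=J_{E/\{1\}}$; its non-invertibility can be detected by a nonvanishing $\hat H^{0}$ or $\hat H^{-1}$ computation on $E$, which is the classical Endo--Miyata/Colliot-Thélène--Sansuc example. Finally, induce this lattice to $G$. By Shapiro's lemma and the Mackey formula the induced lattice is still coflabby. Its restriction to $E$ contains the original lattice as a direct summand, so if it were invertible over $G$ the original would be invertible over $E$, a contradiction. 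Hence (ii) fails.
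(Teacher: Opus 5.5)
The paper gives no proof of this proposition; it only cites Endo--Miyata. Judged on its own, your outline has a genuine gap in Step 3, a false lemma in Step 1, and an unproved core in Step 2.

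\textbf{Step 3.} A non-cyclic Sylow subgroup need not contain $C_{p}\times C_{p}$. Generalized quaternion $2$-groups have a unique involution; examples are $G=Q_{8}$ and $G=\mathrm{SL}_{2}(\mathbb{F}_{3})$. Your detection method also cannot work for these groups. Every subgroup $H$ of $Q_{2^{n}}$ is cyclic or generalized quaternion, so $H^{3}(H,\Z)=0$. Hence the obstruction $H^{1}(H,[J_{H}]^{\fl})\cong H^{3}(H,\Z)$ that you use for $C_{p}\times C_{p}$ vanishes. For instance, $[J_{Q_{8}}]^{\fl}$ is both flabby and coflabby, yet it is not invertible.

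The repair is to use a quotient instead of a subgroup. A non-cyclic $p$-group $S$ has $S/\Phi(S)$ of rank $\geq 2$, so there is a normal $N$ with $S/N\cong C_{p}\times C_{p}$. Let $C$ be a coflabby non-invertible $S/N$-lattice and inflate it to $S$.
\begin{itemize}
\item The inflation is coflabby. Inflation--restriction gives $H^{1}(H,C)\cong H^{1}(HN/N,C)$, because $\Hom(H\cap N,C)=0$.
\item The inflation is not invertible. A decomposition $C\oplus C'\cong P$ with $P$ a permutation $S$-lattice would give $C\oplus C'^{N}\cong P^{N}$, and $P^{N}$ is a permutation $S/N$-lattice by Lemma \ref{lem:qtfx} (i).
\end{itemize}
After this, your induction from $S$ to $G$ goes through unchanged.

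\textbf{Step 1, (iii)$\Rightarrow$(ii).} By Shapiro, $\mathrm{Ext}^{1}_{G}(\Z[G/H],F')\cong H^{1}(H,F')$. So extensions of an invertible lattice by $F'$ split when $F'$ is coflabby, not when it is flabby. Here is a concrete failure. Take $G=C_{2}\times C_{2}$ and let $\cH$ be the set of its three subgroups of order $2$. Then $I_{G/\cH}$ is flabby, and $0\to I_{G/\cH}\to\bigoplus_{H\in\cH}\Z[G/H]\to\Z\to 0$ is a flabby resolution with permutation cokernel. However $H^{1}(G,I_{G/\cH})\cong\Z/2$, so this sequence does not split. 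The fix is to apply (iii) to $C$ itself rather than to $C^{\circ}$. Quasi-invertibility of $C$ gives a flabby resolution $0\to C\to R\to F''\to 0$ with $F''$ invertible. This splits by Lemma \ref{lem:lstr}, since $C$ is coflabby.

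\textbf{Step 2.} The essential input is that a coflabby lattice $M$ over a cyclic $p$-group $G$ is invertible, and you only name it. By induction, $M^{N}$ is invertible over $G/N$. By Diederichsen--Reiner, $M|_{N}$ is a sum of copies of $\Z$ and projectives. These two facts do not by themselves give invertibility of $M$ over $G$. Gluing them together is the real content of Endo--Miyata's theorem, so (i)$\Rightarrow$(ii) is no more proved here than in the paper, which simply cites it.

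The remaining reductions are correct:
\begin{itemize}
\item the characterization of quasi-invertibility via $[M]^{\fl}$;
\item the fact that invertibility can be checked on Sylow subgroups;
\item the equality flabby $=$ coflabby for cyclic groups, by periodicity.
\end{itemize}
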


\begin{prop}[{\cite[Theorem 2.3]{Endo1975}}]\label{prop:edgp}
Let $G$ be a finite group. Then the following are equivalent: 
\begin{enumerate}
\item $J_{G}$ is a quasi-permutation $G$-lattice; 
\item there is an isomorphism $G\cong C_{m}\rtimes C_{2^{\nu}}$, where $m$ is an odd integer, $\nu \in \Znn$, and the action of $C_{2^{\nu}}$ on $C_{m}$ factors through a homomorphism $C_{2^{\nu}}\rightarrow C_{2}$. 
\end{enumerate}
\end{prop}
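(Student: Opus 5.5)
The plan is to prove the two implications separately. Throughout, $J_{G}=J_{G/\{1\}}$, so $J_{G}$ is the character lattice of the norm one torus of a Galois extension with group $G$. I also use that being quasi-permutation passes to subgroups (Proposition \ref{prop:rtiv} together with Proposition \ref{prop:rtrd}).

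\textbf{Necessity.} Assume $J_{G}$ is quasi-permutation, and let $P$ be a $p$-Sylow subgroup of $G$. Proposition \ref{prop:rtmc} with $\cH=\{1\}$ gives an isomorphism of $P$-lattices $J_{G}\cong J_{P/\cH_{P}}$. Here $\cH_{P}$ consists of $(G:P)$ copies of the trivial subgroup, all with weight $1$. Applying Lemma \ref{lem:rded} repeatedly yields $J_{G}\cong J_{P}\oplus \Z[P]^{\oplus (G:P)-1}$ as $P$-lattices, so $J_{P}$ is quasi-permutation.
\begin{itemize}
\item For $p$ odd, Proposition \ref{prop:odqp} then forces $P$ to be cyclic.
\item For $p=2$, Proposition \ref{prop:tgqp} does the same, since case (ii) there needs $\#\cH=2$.
\end{itemize}
Hence all Sylow subgroups of $G$ are cyclic. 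By H\"older--Burnside--Zassenhaus, $G\cong C_{m}\rtimes C_{n}$ with $\gcd(m,n)=1$.

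It remains to show that $m$ is odd and that the action of $C_{n}$ on $C_{m}$ has image of order at most $2$. Suppose not. Then there exist a prime $\ell\mid m$ and a prime power $q^{a}$ dividing $n$ such that some element of $C_{q^{a}}$ acts on $C_{\ell}$ with order $\geq 3$. I would cut down to the subquotient $C_{\ell}\rtimes C_{q^{b}}$ with faithful action of order $\geq 3$, using restriction to subgroups (Proposition \ref{prop:rtrd}) and passage to quotients via Proposition \ref{prop:tkfx} and Lemma \ref{lem:qtfx}. On that group, the main step is to compute $[J]^{\fl}$ explicitly and show it is a nonzero invertible class. Invertibility is automatic by Proposition \ref{prop:emqi}. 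For nonvanishing, I would relate the flabby class to a projective ideal of $\Z[\zeta_{\ell}]^{C_{q^{b}}}$, resp.\ of a twisted group ring, whose nontriviality in the locally free class group follows from the action having order $\geq 3$. I expect this to be the main obstacle, since a cohomological invariant such as $H^{1}$ vanishes here and a finer class-group argument is needed.

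\textbf{Sufficiency.} Let $G=C_{m}\rtimes C_{2^{\nu}}$, where the action has kernel $C_{2^{\nu-1}}$, which is central. I would construct an explicit flabby resolution
\begin{equation*}
0\rightarrow J_{G}\rightarrow R\rightarrow F\rightarrow 0
\end{equation*}
with $R$ and $F$ permutation. The model cases are the following.
\begin{itemize}
\item For cyclic $G$, the classical resolution built from $\Z[G/H]$ for $H$ ranging over the subgroups of prime-power index works. It comes from the stable rationality of norm one tori of cyclic extensions.
\item For $D_{m}$ with $m$ odd, one uses the permutation modules $\Z[G/\langle\tau\rangle]$ and $\Z[G/C_{m}]$.
\end{itemize}
In general I would glue these two cases along the central subgroup $C_{2^{\nu-1}}$, with the primes dividing $m$ handled one at a time via the Chinese remainder decomposition of $\Z[C_{m}]$. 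I would then verify that the cokernel is permutation by checking it on the $H$-fixed points for every subgroup $H$ of $G$.
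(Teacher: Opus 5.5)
The paper does not prove this statement; it quotes it from Endo--Miyata. Your sketch therefore has to stand on its own, and it does not. Both implications stop exactly at the step that carries the content of the theorem.

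\textbf{Necessity.} Your reductions are sound:
\begin{itemize}
\item $J_{G}|_{P}\cong J_{P}\oplus \Z[P]^{\oplus (G:P)-1}$, together with Propositions \ref{prop:odqp} and \ref{prop:tgqp}, forces all Sylow subgroups to be cyclic.
\item $J_{G}^{[N]}\cong J_{G/N}$ by Proposition \ref{prop:tkfx}, and quasi-permutation passes to $M^{[N]}$ because $H^{1}(N,-)$ vanishes on permutation lattices.
\item You do land on $C_{\ell}\rtimes C_{q^{b}}$ with faithful action of order $q^{b}\geq 3$.
\end{itemize}
At that point, however, you only announce that $[J]^{\fl}\neq 0$, and the mechanism you suggest cannot work as described. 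Take $\ell=7$, $q^{b}=3$. Then $\Z[\zeta_{7}]^{C_{3}}$ is the ring of integers of $\Q(\sqrt{-7})$, which has class number one. Moreover, every maximal order in $\Q[C_{7}\rtimes C_{3}]\cong \Q\times \Q(\zeta_{3})\times M_{3}(\Q(\sqrt{-7}))$ has trivial locally free class group. Yet $J_{C_{7}\rtimes C_{3}}$ is not quasi-permutation, since this group is excluded by (ii). So the obstruction is not the nontriviality of an ideal class of such rings. It has to come from finer data of $\Z[G]$, such as how its components are glued together (unit groups rather than class numbers). To close the gap you would have to construct an invariant of invertible lattices that vanishes on permutation lattices, and then compute it to be nonzero on $[J_{C_{\ell}\rtimes C_{q^{b}}}]^{\fl}$. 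Nothing in the proposal does this, and it is the heart of the theorem.

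\textbf{Sufficiency.} No resolution is actually written down. The lattice $R$, the embedding $J_{G}\hookrightarrow R$, and what ``gluing along the central subgroup'' or ``handling primes one at a time'' means for flabby resolutions are all left unspecified. The proposed verification is also insufficient in principle. Checking $H$-fixed points for all $H$ (their ranks, or even the vanishing of all $H^{1}$) cannot show that a lattice is permutation. For instance, let $0\rightarrow J_{G}\rightarrow R\rightarrow F\rightarrow 0$ be a flabby resolution for $G=C_{7}\rtimes C_{3}$. Then $F\oplus \Z[G]$ is invertible and has the same fixed-point ranks as the permutation lattice $R\oplus \Z$. But its class is $[J_{G}]^{\fl}\neq 0$, so it is not permutation. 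Compare Section \ref{sect:cflb}: there, identifying $E_{m}$ up to permutation summands requires an explicit surjection from a permutation lattice, a rank comparison, and Lenstra's splitting lemma (Lemma \ref{lem:lstr}). As it stands, neither direction of the equivalence is proved.
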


\begin{prop}[{\cite[Theorem 3.1]{Endo2011}}]\label{prop:edqp}
Let $G$ be a finite group of which all Sylow subgroups are cyclic. Consider a non-trivial subgroup $H$ of $G$ satisfying $N^{G}(H)=\{1\}$. Then the following are equivalent: 
\begin{enumerate}
\item $J_{G/H}$ is a quasi-permutation $G$-lattice; 
\item $G\cong C_{m}\times D_{n}$ with $m,n\notin 2\Z$, $\gcd(m,n)=1$ and $\#H=2$. 
\end{enumerate}
\end{prop}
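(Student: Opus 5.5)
Since Proposition \ref{prop:emqi} applies, $J_{G/H}$ is automatically quasi-invertible, so $[J_{G/H}]^{\fl}$ is an invertible class in $\sS(G)$. The question is whether it vanishes: $J_{G/H}$ is quasi-permutation exactly when $[J_{G/H}]^{\fl}=0$, because a flabby resolution with stably permutation cokernel can be enlarged by a permutation summand to one with permutation cokernel. The plan is to compute this class using the structure theory of groups with cyclic Sylow subgroups. By H\"older--Burnside--Zassenhaus, $G\cong C_{a}\rtimes C_{b}$ with $\gcd(a,b)=1$, and $H$ is core-free and nontrivial. Two functorialities will be used repeatedly.
\begin{itemize}
\item Restriction: by Propositions \ref{prop:rtiv} and \ref{prop:rtmc}, the quasi-permutation property descends to subgroups $P$, with $J_{G/H}$ becoming $J_{P/\cH_{P}}$.
\item Quotients: by Lemma \ref{lem:qtfx}, dualised through Proposition \ref{prop:tkfx}, it descends to $J^{[N]}=J_{(G/N)/(HN/N)}$ for normal $N$.
\end{itemize}

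For sufficiency, take $G=C_{m}\times D_{n}$ with $m,n$ odd and coprime, and $H=\langle\tau\rangle$; up to conjugacy this is the only subgroup of order $2$. First I treat $m=1$. In this case $J_{D_{n}/\langle\tau\rangle}$ is the character lattice of the norm one torus of a degree $n$ field with dihedral closure, and I will write down an explicit flabby resolution. Concretely, I embed $J_{D_n/\langle\tau\rangle}$ into $\Z[D_{n}/\langle\tau\rangle]\oplus\Z[D_{n}/C_{n}]$ and check that the cokernel is $\Z[D_n/\langle \tau\rangle]$ up to permutation summands; the identity $\Z[D_n/\langle\tau\rangle]\otimes\Z^{-}\cong$ the augmentation-twisted lattice is what makes $n$ odd essential. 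For general $m$, I use $\Z[G/H]\cong\Z[C_{m}]\otimes\Z[D_{n}/\langle\tau\rangle]$ and tensor the $m=1$ resolution with $\Z[C_{m}]$. This handles everything except the trivial summand, and I correct for that using $d_{C_m}(\{1,C_m\})=1$, i.e.\ via Theorem \ref{thm:gcdo} applied on the $C_{m}$-factor.

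For necessity, suppose $J_{G/H}$ is quasi-permutation. First I restrict to each Sylow $p$-subgroup $G_{p}$, which is cyclic. This gives no direct constraint, but combined with Theorem \ref{thm:qips}-style localisation it shows that only the primes dividing $(G:H)$ and the metacyclic twisting matter. Next I pass to quotients $G/N$ with $N$ a characteristic subgroup of $C_{a}$, and to subgroups of the form $C_{\ell}\rtimes C_{q^{e}}$ with $C_{q^{e}}$ acting faithfully on $C_{\ell}$ for primes $\ell\mid a$ and $q\mid b$. Since $H$ is core-free, a suitable such subgroup $P$ contains a conjugate of a nontrivial part of $H$, and $\cH_{P}$ becomes a single core-free subgroup of the Frobenius-type group $C_{\ell}\rtimes C_{q^{e}}$. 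On these small groups I compute $[J]^{\fl}$ directly. Using the unit/class-group description of invertible lattices over $C_{\ell}\rtimes C_{q^e}$ (equivalently an obstruction in $\Z[\zeta_{\ell}]^{C_{q^e}}$-units, as in Endo--Miyata), I show it is nonzero unless $q^{e}=2$ and $H\cap P$ has order $2$. Collecting these local conclusions forces every $q\mid b$ either to act trivially, which contributes the central factor $C_{m}$, or to be $2$ acting by inversion, which contributes $D_{n}$. It also forces $\#H=2$. Proposition \ref{prop:edgp} is the model computation for the case $H=\{1\}$.

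The main obstacle is this last step: proving that the invertible class $[J_{P/H_{P}}]^{\fl}$ is nonzero for $P=C_{\ell}\rtimes C_{q^{e}}$ when $q^{e}\geq3$, or when $H_{P}$ has order $>2$. I would try first to detect nonvanishing by a numerical invariant that is constant on stably permutation lattices, namely the $\Z$-ranks of $M^{K}$ for $K\le P$ compared through Burnside-ring relations. Failing that, I would compute the image in the locally free class group of $\Z[\zeta_{\ell}]\rtimes C_{q^{e}}$.
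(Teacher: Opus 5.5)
The paper does not prove this statement; it quotes it from Endo \cite[Theorem 3.1]{Endo2011}. Read as a standalone proof, your proposal has genuine gaps in both directions.

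\textbf{Sufficiency.} The explicit resolution you describe cannot exist. A $D_n$-map $J_{D_n/\langle\tau\rangle}=\Z[D_n/\langle\tau\rangle]/\Z N\to M$, with $N$ the sum of all cosets, is given by some $x\in M^{\langle\tau\rangle}$ satisfying $\sum_{g\in D_n/\langle\tau\rangle}gx=0$. For $M=\Z[D_n/C_n]$ the $\tau$-fixed vectors are the multiples of the $D_n$-fixed sum of the two cosets, so the condition becomes $nx=0$. Hence $\Hom_{D_n}(J_{D_n/\langle\tau\rangle},\Z[D_n/C_n])=0$, and your map lands in $\Z[D_n/\langle\tau\rangle]$. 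A torsion-free cokernel would then be a rank-one quotient of $\Z[D_n/\langle\tau\rangle]$ with kernel $J_{D_n/\langle\tau\rangle}$. The quotient $\Z^-$ is impossible because $(\Z^-)^{\langle\tau\rangle}=0$. The quotient $\Z$ would force $J_{D_n/\langle\tau\rangle}\cong I_{D_n/\langle\tau\rangle}$, which contradicts $H^1(D_n,J_{D_n/\langle\tau\rangle})=0\neq\Z/n=H^1(D_n,I_{D_n/\langle\tau\rangle})$.

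The passage to $m>1$ also fails as stated. The lattice $\Z[C_m]\otimes J_{D_n/\langle\tau\rangle}$ is not $J_{G/H}$; it is the quotient in an extension $0\to J_{C_m}\to J_{G/H}\to\Z[C_m]\otimes J_{D_n/\langle\tau\rangle}\to 0$, with $J_{C_m}$ inflated. This extension does not split. Already over $D_n$, after pulling back along $y\mapsto 1\otimes y$, a section would send the coset $\langle\tau\rangle$ to $1\otimes\langle\tau\rangle+\sum_{c}a_c\,c\otimes N$. Killing the norm then requires $1+na_1=na_c$ for $c\neq 1$, which is impossible. Flabby classes are not additive along non-split extensions, and Theorem \ref{thm:gcdo} applied to $\{1,C_m\}$ says nothing about this extension.

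\textbf{Necessity.} The decisive step is left undone. You must show $[J_{P/\cH_P}]^{\fl}\neq 0$ for $P=C_\ell\rtimes C_{q^e}$ with $q^e\geq 3$ and $\cH_P^{\srd}$ a single nontrivial subgroup. Already $G=C_7\rtimes C_3$, $H=C_3$ requires this on $P=G$ itself, because every proper subgroup sees a quasi-permutation restriction. This computation is the real content of Endo's theorem. Only when $\cH_P$ consists of trivial groups does Proposition \ref{prop:edgp} settle the matter.

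Your first proposed tool cannot work. For any flabby resolution $0\to J_{G/H}\to P\to F\to 0$ one has $[\Q\otimes F]=[\Q\otimes P]+[\Q]-[\Q[G/H]]$ in the rational representation ring. So $\rk_{\Z}F^{K}$, for every $K$, equals what it would be if $F\oplus\Z[G/H]\cong P\oplus\Z$. Fixed-point ranks and Burnside-ring relations see only $\Q\otimes F$ and can never detect $[F]\neq 0$. What remains is a genuinely integral invariant of invertible classes over $C_\ell\rtimes C_{q^e}$, such as your class-group or unit suggestion, and the proposal does not carry out that computation.
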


In the following, we consider $J_{G/\cH}$ when all Sylow subgroups of $G$ are cyclic. 

\begin{lem}[{\cite[Lemma 5.3 (iii)]{Oki}}]\label{lem:cjsd}
Let $p$ be a prime number, and $G=G_{p}\rtimes G'$. Here $G_{p}$ is a finite abelian $p$-group, and $G'$ is a finite group of order coprime to $p$. Then, for any subgroup $H$ of $G$, there exist $s\in G_{p}$ and a subgroup $H'$ of $G'$ so that $sHs^{-1}=(G_{p}\cap H)\rtimes H'$. 
\end{lem}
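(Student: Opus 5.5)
We will prove the lemma by a Schur--Zassenhaus argument: split $H$ along its normal Sylow $p$-subgroup, and then conjugate the $p'$-complement into $G'$ by an element of $G_{p}$.

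\emph{Step 1: decompose $H$.} Since $G_{p}$ is a normal subgroup of $G$, the group $H\cap G_{p}$ is normal in $H$. The quotient $H/(H\cap G_{p})\cong HG_{p}/G_{p}$ embeds in $G/G_{p}\cong G'$, so its order is prime to $p$. Hence $H\cap G_{p}$ is the normal Sylow $p$-subgroup of $H$. By the existence part of the Schur--Zassenhaus theorem, there is a subgroup $K$ of $H$ of order prime to $p$ with $H=(H\cap G_{p})\rtimes K$.

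\emph{Step 2: conjugate $K$ into $G'$.} Put $U:=G_{p}K$. Since $G=G_{p}G'$ and $G_{p}\subset U$, Dedekind's modular law gives
\begin{equation*}
U=G_{p}(U\cap G').
\end{equation*}
So both $K$ and $U\cap G'$ are complements of $G_{p}$ in $U$; for $K$ this uses that $K$ has order prime to $p$, so $K\cap G_{p}=\{1\}$ and $\#K=(U:G_{p})$.

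Because $G_{p}$ is abelian and $\gcd(\#G_{p},(U:G_{p}))=1$, we have $H^{1}(U/G_{p},G_{p})=0$. Therefore all complements of $G_{p}$ in $U$ are conjugate, and in fact conjugate by elements of $G_{p}$. This is the standard cocycle argument: a second complement corresponds to a $1$-cocycle, which is a coboundary by coprimality, and the coboundary is given by an element of $G_{p}$.

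This gives $s\in G_{p}$ with $sKs^{-1}=H'$, where $H':=U\cap G'$ is a subgroup of $G'$. The conjugating element can be taken in $G_{p}$, rather than just in $U$, because $U=G_{p}K$ and conjugation by an element of $K$ fixes $K$. This is the step that needs the most care. If one prefers to quote Schur--Zassenhaus conjugacy in $U$ directly, one gets some $u=s_{0}k\in U$ and then replaces $u$ by $s_{0}$.

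\emph{Step 3: conclude.} Since $G_{p}$ is abelian, conjugation by $s\in G_{p}$ fixes $H\cap G_{p}$ pointwise. Hence
\begin{equation*}
sHs^{-1}=s(H\cap G_{p})s^{-1}\cdot sKs^{-1}=(G_{p}\cap H)H'.
\end{equation*}
The product is semidirect because $G_{p}\cap H$ is normalized by $H'$ (it is normal in $sHs^{-1}$) and $(G_{p}\cap H)\cap H'\subset G_{p}\cap G'=\{1\}$. This gives $sHs^{-1}=(G_{p}\cap H)\rtimes H'$, as claimed.
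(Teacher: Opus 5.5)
Your proof is correct and complete. Schur--Zassenhaus gives a $p'$-complement $K$ to $G_{p}\cap H$ in $H$. The modular law then exhibits $U\cap G'$ as a second complement of $G_{p}$ in $U=G_{p}K$. The vanishing of $H^{1}(U/G_{p},G_{p})$, or Schur--Zassenhaus conjugacy together with your $u=s_{0}k$ reduction, produces the conjugating element $s\in G_{p}$. Finally, commutativity of $G_{p}$ gives $s(G_{p}\cap H)s^{-1}=G_{p}\cap H$.

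The paper itself gives no argument for this lemma; it is imported from Lemma 5.3 (iii) of Oki. Your write-up therefore serves as a self-contained proof rather than a variant of one in the text.

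Two small observations:
\begin{enumerate}
\item Since $H=(G_{p}\cap H)K$, you have $U=G_{p}K=G_{p}H$. So $H'=G_{p}H\cap G'$ is simply the image of $H$ under the projection $G\to G'$, which makes the choice of $H'$ canonical.
\item The abelian hypothesis is only essential in Step 3. For non-abelian $G_{p}$, Step 2 still works, because $G_{p}$ is solvable and Schur--Zassenhaus conjugacy applies. However, you would then only obtain $sHs^{-1}=(G_{p}\cap sHs^{-1})\rtimes H'$, not the stated form with $G_{p}\cap H$.
\end{enumerate}
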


\begin{lem}\label{lem:rdos}
Let $p$ be a prime number. Put $G=G_{p}\rtimes G'$, where $G_{p}$ is a finite abelian $p$-group, and $G'$ is a finite group of order coprime to $p$. We further assume that
\begin{itemize}
\item $G_{p}\cong (C_{p^{m}})^{n}$ for some $m,n\in \Zpn$; and
\item $G'$ acts on $G_{p}$ by scalar multiplication. 
\end{itemize}
Consider a multiset $\cH$ of subgroups of $G$. Then there is a strongly reduced set $\cH_{0}$ of $G$ such that
\begin{enumerate}
\item $(\cH_{0})_{G_{p}}^{\set}=\cH_{G_{p}}^{\red}$; and
\item $[I_{G/\cH}]=[I_{G/\cH_{0}}]$ and $[J_{G/\cH}]=[J_{G/\cH_{0}}]$. 
\end{enumerate}
\end{lem}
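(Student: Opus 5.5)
The plan is to use Lemma \ref{lem:cjsd} to put every member of $\cH$ in a normal form, then use the scalar action to make the $G_{p}$-parts of all members normal in $G$, and finally prune the multiset with Propositions \ref{prop:conj}, \ref{prop:rdrd} and \ref{prop:rdsr}.

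\textbf{Normal form and the $G_{p}$-restriction.} By Lemma \ref{lem:cjsd}, each $H\in\cH$ has a $G_{p}$-conjugate of the form $(G_{p}\cap H)\rtimes H'$ with $H'\le G'$. Replacing members of $\cH$ by conjugates does not change $I_{G/\cH}$ or $J_{G/\cH}$ up to isomorphism, by repeated use of Proposition \ref{prop:conj}. So we may assume every $H\in\cH$ is $A_{H}\rtimes H'$ with $A_{H}:=G_{p}\cap H$. Since $G'$ acts on $G_{p}$ by scalars, every subgroup of $G_{p}$ is $G'$-stable. As $G_{p}$ is abelian, every subgroup of $G_{p}$ is therefore normal in $G$. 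Hence $G_{p}\cap gHg^{-1}=g(G_{p}\cap H)g^{-1}=A_{H}$ for all $g\in G$. Proposition \ref{prop:rtmc} then gives $\cH_{G_{p}}^{\set}=\{A_{H}\mid H\in\cH\}$, so $\cH_{G_{p}}^{\red}$ is the set of maximal $A_{H}$. The same computation holds for any set of subgroups of this shape, so condition (i) for a candidate $\cH_{0}$ reduces to checking which $A_{H}$ occur.

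\textbf{Pruning.} By Proposition \ref{prop:rdrd} we may pass from $\cH$ to $\cH^{\red}$ without changing the similarity classes. By Proposition \ref{prop:rdsr} we may then pass to a strongly reduced $\cH^{\srd}$, which again preserves classes since only permutation summands are discarded. Every maximal $A$ survives these steps. Indeed, if $A_{H}=A$ is maximal, the member of $\cH^{\red}$ containing $H$ has $G_{p}$-part containing $A$, hence equal to $A$. A member discarded when forming $\cH^{\srd}$ lies in a conjugate of a kept member, and that kept member again has $G_{p}$-part $A$, by normality of subgroups of $G_{p}$ and maximality of $A$. So $\{A_{K}\mid K\in\cH^{\srd}\}\supseteq\cH_{G_{p}}^{\red}$.

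\textbf{Main obstacle.} The difficulty is the reverse inclusion. $\cH^{\srd}$ may still contain some $K=B\rtimes K'$ with $B\subsetneq A$ for a maximal $A=A_{H}$, $H=A\rtimes H'$, but $K'$ not conjugate into $H'$; then $K\not\subset gHg^{-1}$ for every $g$. I would try to replace such $K$ by $\widetilde K:=A\rtimes K'$ (or by $A\rtimes G'$), which contains both $K$ and, after enlarging $H$ the same way, $H$. The aim is to show $[J_{G/\cH}]$ is unchanged, using Lemma \ref{lem:rded} with suitable weights. The point is that only $p$-power and $p'$ index factors occur. The $p'$-part of the indices can be absorbed after normalizing weights (Lemmas \ref{lem:pnor}, \ref{lem:dvci}). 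The $p$-part comparison should be controlled by $A\supsetneq B$, so that the summand for $G/K$ splits off against $G/\widetilde K$. If this weighted splitting fails, the fallback is to compare the $G_{p}$- and $G'$-restrictions separately and glue via \cite[Corollary 1.6]{Endo2011}-type arguments. After finitely many such replacements, a final application of Propositions \ref{prop:rdrd} and \ref{prop:rdsr} yields a strongly reduced $\cH_{0}$ whose $G_{p}$-parts are exactly the maximal ones, giving (i) and (ii).
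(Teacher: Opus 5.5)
Your first steps match the paper's. You use Lemma \ref{lem:cjsd} with Proposition \ref{prop:conj} to reach the form $A_H\rtimes H'$, and you note that the scalar action makes every subgroup of $G_p$ normal in $G$, so $G_p\cap gHg^{-1}=A_H$. Your pruning bookkeeping is also fine.

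\textbf{The gap.} The step you call the main obstacle is where all the content lies, and it is never proved. You need to show that replacing a member $K=B\rtimes K'$ with $B\subsetneq A=A_H$ by $\widetilde K=AK=A\rtimes K'$ preserves $[I_{G/\cH}]$ and $[J_{G/\cH}]$. The tools you name cannot do this:
\begin{itemize}
\item Lemma \ref{lem:rded} only deletes a member already contained in another member of the multiset. It never introduces a new subgroup such as $\widetilde K$.
\item Lemmas \ref{lem:pnor} and \ref{lem:dvci} only divide all weights by a common factor.
\item The fallback via \cite[Corollary 1.6]{Endo2011} detects quasi-invertibility on Sylow subgroups. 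It says nothing about equality of classes in $\sS(G)$.
\end{itemize}
Your parenthetical variant, enlarging both $H$ and $K$ to $A\rtimes G'$, does not preserve the class. Take $G=C_2\times(\langle a\rangle\times\langle b\rangle)$ with $a^3=b^3=1$, and $\cH=\{C_2\times\langle a\rangle,\langle b\rangle\}$. The variant gives $\{G\}$, whose $J$ is $0$. By the argument below, $[J_{G/\cH}]=[J_{G/\{C_2\times\langle a\rangle,\,C_2\times\langle b\rangle\}}]$. By Proposition \ref{prop:rtmc}, the latter restricts to the $3$-Sylow subgroup as $J_{C_3^2/\{\langle a\rangle,\langle b\rangle\}}$, which is not quasi-invertible by Proposition \ref{prop:odqp}.

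\textbf{The missing argument.} The paper delegates this step to the argument of \cite[Lemma 8.3]{Hasegawa}. It can be supplied by a splitting in the spirit of Theorem \ref{thm:gcdo}, not by manipulating weights.
\begin{itemize}
\item Let $\cH_+$ be $\cH$ with $\widetilde K$ added. Projecting onto the new summand gives an exact sequence $0\to I_{G/\cH}\to I_{G/\cH_+}\to\Z[G/\widetilde K]\to 0$.
\item This sequence splits as soon as $\bigoplus_{H''\in\cH}\Z[G/H'']$ contains a $\widetilde K$-invariant element of augmentation $-1$. That holds as soon as the sizes of the $\widetilde K$-orbits on the sets $G/H''$, for $H''\in\cH$, have gcd $1$.
\item The orbit of the coset $K$ in $G/K$ has size $(\widetilde K:K)=(A:B)$, a power of $p$.
\item Since $A$ is normal in $G$ and contained in $H$, it lies in every $gHg^{-1}$. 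Hence every $\widetilde K$-orbit on $G/H$ has size dividing $(\widetilde K:A)=\#K'$, which is prime to $p$.
\item Therefore $I_{G/\cH_+}\cong I_{G/\cH}\oplus\Z[G/\widetilde K]$.
\item Lemma \ref{lem:rded}, applied to $K\subset\widetilde K$, gives $I_{G/\cH_+}\cong I_{G/\cH_1}\oplus\Z[G/K]$, where $\cH_1$ is $\cH$ with $K$ replaced by $\widetilde K$. Dualizing handles $J$.
\end{itemize}
The member $H$ is kept, so each such replacement leaves the set of maximal $G_p$-parts unchanged. It also strictly reduces the number of members with non-maximal $G_p$-part. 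After finitely many replacements, your pruning with Propositions \ref{prop:rdrd} and \ref{prop:rdsr} yields an $\cH_0$ satisfying (i) and (ii).
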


\begin{proof}
By Lemma \ref{lem:cjsd} and Proposition \ref{prop:conj}, we may assume that all elements of $\cH$ are of the form $N\rtimes H'$, where $N<G_{p}$ and $H'<G'$. Since $G'$ acts on $G_{p}$ by scalar, we have
\begin{equation*}
G_{p}\cap gHg^{-1}=G_{p}\cap H
\end{equation*}
for any $H\in \cH$ and $g\in G$. Then the assertion follows from the same argument as \cite[Lemma 8.3]{Hasegawa}. 
\end{proof}

\begin{lem}\label{lem:hall}
Let $G$ be a finite group of which all Sylow subgroups are cyclic. Then there exists an isomorphism
\begin{equation*}
G\cong N\rtimes G',
\end{equation*}
where $N$ and $G'$ are cyclic, satisfying \emph{(i)} and \emph{(ii)} as follows. 
\begin{enumerate}
\item The integers $\#N$ and $\#G'$ are coprime to each other. 
\item For any prime divisor $p$ of $\#G$, a $p$-Sylow subgroup of $G$ is normal if and only if $p\mid \#N$. 
\end{enumerate}
\end{lem}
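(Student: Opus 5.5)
The plan is to use the classical structure theory of Z-groups, i.e. groups all of whose Sylow subgroups are cyclic. By Burnside's normal $p$-complement theorem (equivalently Hölder--Burnside--Zassenhaus), such a $G$ is metacyclic of the form $G=\langle a,b\mid a^{m}=b^{n}=1,\ bab^{-1}=a^{r}\rangle$ with $\gcd(m,n)=1$ and $\gcd(m,n(r-1))=1$. We set $N:=\langle a\rangle$ (which is $[G,G]$) and $G':=\langle b\rangle$. Both are cyclic with coprime orders, so (i) holds and $G\cong N\rtimes G'$.

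If one prefers not to quote the normal form, the existence can be derived directly:
\begin{enumerate}
\item Let $p$ be the smallest prime divisor of $\#G$. Since a $p$-Sylow subgroup $P$ is cyclic, $N_{G}(P)/C_{G}(P)$ embeds in $\Aut(P)$, whose order has $p'$-part dividing $p-1$. Minimality of $p$ then forces $N_{G}(P)=C_{G}(P)$.
\item Burnside's theorem gives a normal $p$-complement.
\item Induction yields a Sylow tower, so $G$ is solvable.
\item Take $N:=[G,G]$. The quotient $G/[G,G]$ is an abelian group with cyclic Sylow subgroups, hence cyclic. $[G,G]$ is cyclic, being a Z-group that is also the derived subgroup of a Z-group; this is the standard step.
\item The Schur--Zassenhaus theorem supplies a complement $G'$, once coprimality $\gcd(\#[G,G],(G:[G,G]))=1$ is known. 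That coprimality is the genuinely nontrivial classical input, and I would cite it rather than reprove it.
\end{enumerate}

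The main point is condition (ii). If $p\mid\#N$, the $p$-Sylow subgroup of the cyclic normal subgroup $N$ is characteristic in $N$, hence normal in $G$. Because $\#N$ and $\#G'$ are coprime, it is also a full $p$-Sylow subgroup of $G$, so it is normal.

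For the converse, suppose $p\mid\#G'$ and the $p$-Sylow subgroup $P\subset G'$ (up to conjugacy) is normal. Then $[N,P]\subset N\cap P=1$, since $P$ and $N$ are both normal and of coprime orders. Hence $P$ centralizes $N$. $P$ also centralizes the abelian group $G'$, so $P$ is central in $G$. Now the normal form condition $\gcd(m,r-1)=1$ says that $b$ acts on $N$ fixed-point-freely in the appropriate sense. This should contradict the presence of a nontrivial central $p$-part in $G'$ while $p\nmid\#N$. But this contradiction is not automatic: central $p$-parts of $G'$ can in fact occur (e.g.\ $G$ abelian with $N=1$), and then $P$ is normal even though $p\nmid\#N$.

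So the correct choice is instead to take $N$ to be the product of all normal Sylow subgroups of $G$. This subgroup is normal, and it is cyclic since its Sylow subgroups are cyclic and it is nilpotent. Then (ii) holds by construction. For a complement, $G/N$ is again a Z-group, and Schur--Zassenhaus gives $G\cong N\rtimes G'$ with $\#N$ and $\#G'$ coprime by construction, which is (i).

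The remaining obstacle is to show that $G'$ is cyclic. Here I would use the metacyclic structure above: $[G,G]$ is contained in the cyclic normal Hall subgroup $\langle a\rangle$. Every Sylow subgroup of $\langle a\rangle$ is normal in $G$, so $\langle a\rangle\subset N$. Therefore $G'\cong G/N$ is a quotient of $G/\langle a\rangle\cong C_{n}$, and hence cyclic.
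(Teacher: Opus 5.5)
Your final argument is correct once you discard the first choice $N=\langle a\rangle$, which, as you note, violates (ii) (e.g.\ for cyclic $G$, where the normal form forces $m=1$). It reaches the decomposition by a different route from the paper.

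The paper stays inside Hall's presentation $G=\langle\sigma,\tau\mid\sigma^{m}=\tau^{n}=1,\ \tau\sigma\tau^{-1}=\sigma^{s}\rangle$. For a prime $\ell\mid n$, it determines that the $\ell$-Sylow subgroup is normal exactly when $\ell$ does not divide the order of $s$ in $(\Z/m\Z)^{\times}$. It collects the full $\ell$-parts of $n$ for these primes into $n'$ and writes down $N=\langle\sigma,\tau^{n/n'}\rangle$ and $G'=\langle\tau^{n'}\rangle$ explicitly. The complement is then visibly a subgroup of the cyclic group $\langle\tau\rangle$, so Schur--Zassenhaus is not needed. The price is the normality computation and the check that $\tau^{n/n'}$ centralizes $\sigma$.

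You instead define $N$ intrinsically as the product of all normal Sylow subgroups; this is the same subgroup as the paper's. That makes (ii) tautological and cyclicity of $N$ immediate. You then get $G'$ from Schur--Zassenhaus and its cyclicity from $\langle a\rangle\subseteq N$. This uses only the coarse consequence of the classification, namely that $G$ has a normal cyclic Hall subgroup with cyclic quotient; the conditions on $r$ play no role.

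In a final write-up, drop the abandoned attempt and the sketch that avoids the normal form. That sketch still cites the two nontrivial facts (cyclicity of $[G,G]$ and its coprimality with $(G:[G,G])$), so it does not really bypass the classification, and your argument does not need it.
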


\begin{proof}
By \cite[Theorem 9.4.3]{Hall1959}, there exist elements $\sigma,\tau$ of $G$ and positive integers $m,n,s$, where $m\notin 2\Z$, $s^{n}\equiv 1\bmod m$, $s\in \{0,\ldots,m-1\}$ and $\gcd(m,n)=\gcd(m,s-1)=1$, so that 
\begin{equation*}
    G=\langle \sigma,\tau\mid \sigma^{m}=\tau^{n}=1,\tau\sigma\tau^{-1}=\sigma^{s}\rangle. 
\end{equation*}
On the other hand, for a prime divisor $\ell$ of $n$, an $\ell$-Sylow subgroup of $G$ is normal if and only if the following hold: 
\begin{equation}\label{eq:pdnd}
    \ell\mid n/\#\langle r\bmod m\rangle,\quad \ell \nmid \#\langle r\bmod m\rangle. 
\end{equation}
Write $n'$ for the product of $\ell^{\ord_{\ell}(n)}$ for all prime numbers $\ell$ that satisfy \eqref{eq:pdnd}. Let $N$ be the subgroup of $G$ generated by $\sigma$ and $\tau^{n/n'}$. We denote by $G'$ the subgroup of $G$ generated by $\tau^{n'}$. Then $N$ and $G'$ satisfy the desired conditions. 
\end{proof}

Now, Theorem \ref{mth1} follows from Proposition \ref{prop:coch} and Theorem \ref{thm:emmn} as follows. 

\begin{thm}\label{thm:emmn}
Let $G$ be a finite group of which all Sylow subgroups are cyclic, and $\cH$ a reduced set of its subgroups. Then the $G$-lattice $J_{G/\cH}$ is quasi-invertible. Moreover, it is quasi-permutation if and only if 
\begin{equation*}
\mathscr{D}_{G}(\cH)\cap \mathscr{P}_{*}(G)=\emptyset. 
\end{equation*}
Here $\mathscr{D}_{G}(\cH)$ and $\mathscr{P}_{*}(G)$ are defined as follows: 
\begin{itemize}
\item $\mathscr{D}_{G}(\cH)$ is the set of prime divisors of $d_{G}(\cH)$; 
\item $\mathscr{P}_{*}(G)$ is the set of prime divisors $p$ of $\#G$ whose Sylow subgroup $G_{p}$ of $G$ is normal and $\#\Ima(\alpha_{G,p})\geq 3$, where
\begin{equation*}
\alpha_{G,p}\colon G \rightarrow \Aut(G_{p});g \mapsto [h\mapsto ghg^{-1}]. 
\end{equation*}
\end{itemize}
\end{thm}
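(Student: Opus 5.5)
Quasi-invertibility is immediate from Proposition \ref{prop:emqi}: since all Sylow subgroups of $G$ are cyclic, every $G$-lattice, in particular $J_{G/\cH}$, is quasi-invertible. So the work is the criterion for quasi-permutation.

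The plan is to reduce to the field case treated by Endo. We write $G\cong N\rtimes G'$ as in Lemma \ref{lem:hall}, with $N,G'$ cyclic of coprime orders and $N=\prod_{p\mid \#N}G_{p}$ the product of the normal Sylow subgroups. For each prime $p$ dividing $\#N$, the cyclic group $G_{p}$ has automorphism group in which the image of $G'$ acts by scalars. Hence Lemma \ref{lem:rdos} applies to $G_{p}\rtimes(\text{complement})$, and iterating over primes (as in \cite[Lemma 8.3, Lemma 9.1]{Hasegawa}) we replace $\cH$, up to similarity, by a strongly reduced set $\cH_{0}$. Its restrictions to each Sylow subgroup are $\cH^{\red}_{G_{p}}$. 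Since $G_{p}$ is cyclic, its subgroups form a chain, so $\cH^{\red}_{G_{p}}$ is a single subgroup of index $p^{\ord_{p}d_{G}(\cH)}$ by Corollary \ref{cor:hppo}. The same holds for non-normal Sylow subgroups. Using the product description (analogous to \cite[Lemma 9.1]{Hasegawa}), we aim to show $[J_{G/\cH}]=[J_{G/H_{0}}]+[\text{quasi-permutation}]$ for a single subgroup $H_{0}$ with $(G:H_{0})=d_{G}(\cH)$, and local data determined by $d_{G}(\cH)$. Concretely $H_{0}$ should be $\bigl(\prod_{p\mid \#N}G_{p}^{p^{a_{p}}}\bigr)\rtimes H'$ with $a_{p}=\ord_{p}d_{G}(\cH)$. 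Here $H'$ is the subgroup of $G'$ of index $\prod_{p\nmid \#N}p^{a_{p}}$ (up to conjugacy).

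Then we test quasi-permutation of $J_{G/H_{0}}$. Passing to $G/N^{G}(H_{0})$ via Proposition \ref{prop:tkfx} and Lemma \ref{lem:qtfx}, we apply Proposition \ref{prop:edqp} (the case $H_{0}$ trivial modulo core is Proposition \ref{prop:edgp}, with $G/N^{G}(H_{0})$ cyclic giving quasi-permutation). In the quotient the normal Sylow part is $\prod_{p\in\mathscr{D}_{G}(\cH)\cap\{p\mid\#N\}} C_{p^{a_{p}}}$, acted on by $H'$ modulo core. The criterion $C_{m}\times D_{n}$ with $\#H=2$ should translate exactly into: for every $p\in\mathscr{D}_{G}(\cH)$ with $G_{p}$ normal, the action of $G$ on $G_{p}$ has image of order $\le 2$. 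Here we must check that the image of $\alpha_{G,p}$ on $G_{p}$ equals that on its quotient $C_{p^{a_{p}}}$ when $a_{p}\ge1$, or at least that order $\ge3$ persists. This holds because the kernel of $\Aut(C_{p^{k}})\to\Aut(C_{p})$ is a $p$-group while $G'$ has order prime to $p$. For necessity we can alternatively restrict to the subgroup $G_{p}\rtimes G'$ (Proposition \ref{prop:rtmc}) and invoke Proposition \ref{prop:edgp}/\ref{prop:edqp} there.

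The main obstacle is the reduction step: showing rigorously that $J_{G/\cH}$ is similar, modulo quasi-permutation summands, to the single $J_{G/H_{0}}$. This requires a version of \cite[Lemma 9.1]{Hasegawa} for semidirect (not direct) products. I would first try to prove it by flabby-class computation prime by prime, using that flabby classes of quasi-invertible lattices are determined by restrictions to Sylow subgroups \cite[Corollary 1.6]{Endo2011}, combined with Lemma \ref{lem:rdos}.
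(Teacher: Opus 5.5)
Your outline follows the same architecture as the paper's proof. Quasi-invertibility comes from Proposition \ref{prop:emqi}. Case 2 reduces $\cH$ to the single subgroup $H_{0}=\langle H\mid H\in\cH\rangle=C_{0}\rtimes H'_{0}$, of index $d_{G}(\cH)$. Case 1 passes to $G/N^{G}(H_{0})$ and applies Propositions \ref{prop:edgp} and \ref{prop:edqp}. Your check that $\#\Ima(\alpha_{G,p})$ is unchanged when $G_{p}$ is replaced by a nontrivial quotient is correct: the kernel of $\Aut(C_{p^{k}})\to\Aut(C_{p^{j}})$, $j\geq 1$, is a $p$-group, while the acting group has order prime to $p$. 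That check justifies the identity $\mathscr{P}_{*}(\overline{G})=\mathscr{D}_{G}(\cH)\cap\mathscr{P}_{*}(G)$, which the paper only asserts.

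The gap is the reduction step, which you leave open, and the principle you propose for closing it is false. Restriction to Sylow subgroups detects \emph{invertibility} of the flabby class; that is all the Endo criterion used in Theorem \ref{thm:qips} says. It does not detect classes in $\sS(G)$. Take $G=C_{7}\rtimes C_{3}$ with faithful action. Then $J_{G}$ restricts to $J_{C_{7}}\oplus\Z[C_{7}]^{\oplus 2}$ and to $J_{C_{3}}\oplus\Z[C_{3}]^{\oplus 6}$, both quasi-permutation. Yet $J_{G}$ is not quasi-permutation by Proposition \ref{prop:edgp}, so $[J_{G}]^{\fl}\neq 0$ although every Sylow restriction of it vanishes. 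More generally, for these $G$ every $J_{G/\cH}$ restricts on each cyclic Sylow subgroup to something similar to a cyclic norm-one lattice, hence to a quasi-permutation lattice. The criterion in the theorem is therefore a purely global phenomenon, and no prime-by-prime flabby computation can yield $[J_{G/\cH}]=[J_{G/H_{0}}]$.

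What you are missing is elementary, because Lemma \ref{lem:rdos} is already an identity in $\sS(G)$; the paper's Case 2 rests on it, without any semidirect analogue of the product lemma. Conjugate via Lemma \ref{lem:cjsd} so that every member has the form $C\rtimes H'$ with $C\leq N$ and $H'\leq G'$. Subgroups of the cyclic normal subgroup $N$ are characteristic, so $N\cap gHg^{-1}=N\cap H$; this is exactly the input the lemma needs. The paper applies it to $N$ as a whole, which avoids your iteration over primes, where later steps need not preserve earlier normalizations. The lemma yields a strongly reduced $\cH_{0}$ in the same class all of whose members meet $N$ in a single subgroup $C_{0}$, the maximum of the chain of subgroups of $N$. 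Each member is then conjugate to $C_{0}\rtimes K$ with $K\leq G'$. Since $G'$ is cyclic, these subgroups form a chain, so strong reducedness forces $\#\cH_{0}=1$; no control of the non-normal Sylow subgroups is needed. The surviving subgroup has index $d_{G}(\cH)$. You can see this either from the explicit description of $H_{0}$, as in the paper, or because $H^{1}(G,I_{G/\cH})\cong\Z/d_{G}(\cH)$ is an invariant of the similarity class.
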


\begin{proof}
Write $G=N\rtimes G'$, where $N$ and $G'$ are as in Lemma \ref{lem:hall}. By Lemma \ref{lem:cjsd}, we may assume that all subgroups of $G$ lying in $\cH$ are of the form $C\rtimes H'$, where $C<N$ and $H'<G'$. 

\textbf{Case 1.~$\#\cH=1$. }

Let $\cH=\{H_{0}\}$, and put $\overline{G}:=G/N^{G}(H_{0})$ and $\overline{H}_{0}:=H_{0}/N^{G}(H_{0})$. Then, there is an isomorphism $J_{\overline{G}/\overline{H}_{0}}\cong J_{G/H_{0}}$. On the other hand, we have $\mathscr{P}_{*}(\overline{G})=\delta(\cH)\cap \mathscr{P}_{*}(G)$. Hence, it suffices to prove that the $G$-lattice $J_{G/H_{0}}$ is quasi-permutation if and only if $\mathscr{P}_{*}(G)=\emptyset$, under the additional assumption $N^{G}(H_{0})=\{1\}$. If $H_{0}=\{1\}$, then the assertion follows from Proposition \ref{prop:edgp}. Otherwise, Proposition \ref{prop:edqp} gives the desired assertion. 

\textbf{Case 2.~$\#\cH \geq 2$. }

Let $\mathscr{P}(G)$ the set of prime divisors $p$ of $\#G$ such that a $p$-Sylow subgroup of $G$ is normal. Note that it coincides with the set of prime divisors of $\#N$, and hence contains $\mathscr{P}_{*}(G)$. We denote by $H_{0}$ the subgroup of $G$ generated by $H$ for all $H\in \cH$. Let 
\begin{equation*}
d_{1}(\cH):=\prod_{p\in \mathscr{P}(G)}p^{\ord_{p}d_{G}(\cH)},\quad 
d'_{1}(\cH):=\prod_{p\notin \mathscr{P}(G)}p^{\ord_{p}d_{G}(\cH)}. 
\end{equation*}
Then we have $H_{0}=C_{0}\rtimes H'_{0}$, where $(N:C_{0})=d_{1}(\cH)$ and $(G':H'_{0})=d'_{1}(\cH)$. In particular, one has $(G:H_{0})=d(\cH)$. Therefore, Lemma \ref{lem:rdos} implies the existence of permutation $G$-lattices $R_{1}$, $R_{2}$ and an isomorphism of $G$-lattices
\begin{equation*}
J_{G/\cH}\oplus R_{1}\cong J_{G/H_{0}}\oplus R_{2}. 
\end{equation*}
Hence, the assertion is reduced to Case 1 since $\delta(H_{0})=\delta(\cH)$ holds. 
\end{proof}

\begin{ex}
Consider the finite group as follows: 
\begin{equation*}
G:=\langle \sigma,\tau \mid \sigma^{15}=\tau^{4}=1,\tau \sigma \tau^{-1}=\sigma^{2}\rangle. 
\end{equation*}
Note that all Sylow subgroups of $G$ are cyclic. Moreover, the homomorphism
\begin{equation*}
\alpha \colon \langle \tau \rangle \rightarrow \Aut(\langle \sigma \rangle)
\end{equation*}
is injective, and hence $i(G):=\#\Ima(\alpha)=4$. Note that the symbol $i(G)$ is used in \cite{Endo1975} and \cite{Endo2011}. Moreover, one has $\mathscr{P}_{*}(G)=\{5\}$. Let $\cH:=\{\langle \sigma^{3},\tau^{2}\rangle,\langle \tau \rangle\}$, which is a reduced set of subgroups of $G$. Then one has $d(\cH)=\gcd(6,15)=3$. Hence $\delta(\cH)=\{3\}$, in particular, it does not intersect with $\mathscr{P}_{*}(G)$. Therefore, the $G$-lattice $J_{G/\cH}$ is quasi-permutation by Theorem \ref{thm:emmn}. 
\end{ex}

\section{Coflabby resolution of particular lattices over dihedral groups}\label{sect:cflb}

This section is devoted to the proof of the following. 

\begin{thm}\label{thm:nzf2}
Let $m$ be an odd positive integer, and consider a strongly reduced set 
\begin{equation*}
\cH_{m}:=\{\langle \sigma_{2m}^{m}\rangle, \langle \tau_{2m} \rangle \}
\end{equation*}
of subgroups of $D_{2m}$. Then the $D_{2m}$-lattice $J_{D_{2m}/\cH_{m}}$ is quasi-permutation. 
\end{thm}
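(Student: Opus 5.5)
Since $m$ is odd, we have $D_{2m}\cong C_{2}\times D_{m}$, with $\sigma_{2m}^{m}$ the central involution. Write $z:=\sigma_{2m}^{m}$, $G:=D_{2m}$, $H_{1}:=\langle z\rangle$ and $H_{2}:=\langle \tau_{2m}\rangle$. Then $(G:H_{1})=2m$, $(G:H_{2})=2m$, and $d_{G}(\cH_{m})=2m$. The overview in the introduction tells us that this case has to be settled by writing down a coflabby resolution of $I:=I_{G/\cH_{m}}=X_{*}(T_{\bK'/k})$ explicitly. So the plan is to build an exact sequence
\begin{equation*}
0\rightarrow F\rightarrow R\rightarrow I_{G/\cH_{m}}\rightarrow 0
\end{equation*}
with $R$ permutation, and to show that $F$ is not merely coflabby but stably permutation. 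Dualizing then gives a flabby resolution $0\rightarrow J_{G/\cH_{m}}\rightarrow R^{\circ}\rightarrow F^{\circ}\rightarrow 0$ with $[J_{G/\cH_{m}}]^{\fl}=[F^{\circ}]=0$. This means $J_{G/\cH_{m}}$ is quasi-permutation: after adding a permutation lattice to $F^{\circ}$ we obtain a permutation cokernel.

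\textbf{Building $R$.} The natural candidates for the summands of $R$ are permutation modules $\Z[G/K]$, where $K$ runs over the subgroups $\{1\}$, $\langle z\rangle$, $\langle\tau_{2m}\rangle$, $\langle z,\tau_{2m}\rangle$, $\langle\sigma_{2m}^{2}\rangle$ (of index $2$ in $D_m$-part), and so on. Each comes with a map to $I$ sending the coset $K$ to an element of $I^{K}$. Concretely, $I\subset \Z[G/H_{1}]\oplus\Z[G/H_{2}]$ is the augmentation kernel, and $I^{K}$ is generated by differences of orbit sums together with elements like $u-v$ with $u\in \Z[G/H_{1}]^{K}$ and $v\in\Z[G/H_{2}]^{K}$ of equal augmentation. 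The surjectivity criterion for a coflabby resolution is that $R^{K}\rightarrow I^{K}$ is surjective for every subgroup $K$ (equivalently $H^{1}(K,F)=0$). So I would choose generators making each $R^{K}\rightarrow I^{K}$ onto. Two reductions simplify this. First, Lemma \ref{lem:qtfx}(iii) lets us check the quotients $G/\langle z\rangle\cong D_{m}$ compatibly. Second, since $m$ is odd, Sylow-by-Sylow arguments let us restrict to the $2$-Sylow subgroups $\langle z,\sigma^{i}\tau\rangle\cong C_{2}^{2}$, and elements of odd order only contribute invertible information via the standard restriction arguments (compare Theorem \ref{thm:qips} and \cite[Corollary 1.6]{Endo2011}). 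The Klein four case is exactly case (ii) of Proposition \ref{prop:tgqp} with $\nu=1$, so locally at $2$ the required resolution is the known one for $D_{2}$, and we lift it by inducing along $D_{m}$.

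\textbf{Identifying $F$.} Compute the rank of $F$ as $\rk R-(4m-1)$, and find $F$ explicitly as a sublattice of $R$, for instance spanned by relations like $(1+z)e-\dots$ and $(1+\tau)e'-\dots$. Then exhibit an isomorphism $F\oplus P_{1}\cong P_{2}$ with $P_{1},P_{2}$ permutation, presumably of the form $\Z[G/\langle z,\tau\rangle]$ and $\Z[G/\langle\tau\rangle]$ built from an explicit $\Z$-basis permuted up to such summands. A cheaper alternative is to show $F$ is invertible and then use that $D_{m}$ with $m$ odd has cyclic Sylow subgroups. Restricted to odd-order pieces, invertible coincides with stably permutation in favorable cases (Endo--Miyata), but this is not automatic for the $C_{2}^{2}$ part. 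So I would aim for a direct isomorphism.

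\textbf{Main obstacle.} The main obstacle is the explicit verification that $F$ is stably permutation rather than only invertible. This requires bookkeeping of orbits under $D_{2m}$ for general odd $m$. I would first work out $m=1$ (where $G=C_{2}^{2}$ and the claim is Proposition \ref{prop:tgqp}) and $m=3$ by hand, and then guess the uniform basis.
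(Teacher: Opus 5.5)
Your plan has the same outer architecture as the paper's proof: write down a coflabby resolution $0\to E\to R\to I_{D_{2m}/\cH_{m}}\to 0$, show that the kernel is stably permutation, and dualize. However, it stops where the proof has to start, so there is a genuine gap. No permutation lattice $R$ and no map $R\to I_{D_{2m}/\cH_m}$ is specified. As a result, surjectivity on fixed points is never checked, the kernel is never identified, and the step you call the main obstacle is left to guessing a basis from $m=1,3$.

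The shortcuts you suggest for getting there also do not work:
\begin{itemize}
\item Checking $H^{1}(K,F)=0$ only for subgroups of prime-power order is legitimate (Lemma \ref{lem:flpp}), but it needs a global $R$ to check on.
\item Lemma \ref{lem:qtfx}(iii) only transports a coflabby resolution \emph{down} to $N$-invariants. It cannot certify a resolution over $D_{2m}$ from the quotient $D_{2m}/\langle\sigma_{2m}^{m}\rangle$.
\item The Klein four resolution cannot be lifted by inducing along $D_{m}$. Inducing a resolution of the restriction of $I_{D_{2m}/\cH_m}$ to a $2$-Sylow subgroup resolves a lattice of $m$ times the rank, not $I_{D_{2m}/\cH_m}$.
\item More fundamentally, Sylow-local information (Theorem \ref{thm:qips}, \cite[Corollary 1.6]{Endo2011}) detects quasi-invertibility, not quasi-permutation; Theorem \ref{thm:emmn} illustrates exactly this difference.
\item Your ``cheaper alternative'' is unavailable. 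The $2$-Sylow subgroup of $D_{2m}$ is $C_{2}\times C_{2}$, and there is no general principle turning invertible into stably permutation; that upgrade is precisely what must be proved.
\end{itemize}

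The idea you are missing is how the paper divides the work between local and global arguments.
\begin{itemize}
\item It takes $R_m=\Z[D_{2m}]\oplus\Z[D_{2m}/\langle\sigma_{2m}^{m}\rangle]\oplus\Z$, sending the three generators to $(1,-1)$, $(1+\sigma_{2m}^{2},-(1+\sigma_{2m}^{m}))$ and $((1+\sigma_{2m}+\cdots+\sigma_{2m}^{m-1})(1+\tau_{2m}),-(1+\sigma_{2m}+\cdots+\sigma_{2m}^{2m-1}))$.
\item It proves the kernel $E_m$ is a permutation lattice over $D^{(m)}=\langle\sigma_{2m}^{2},\tau_{2m}\rangle$ (Lemma \ref{lem:rsdg}). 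It then checks fixed-point surjectivity by hand for $\langle\sigma_{2m}^{m}\rangle$, $\langle\sigma_{2m}^{m},\tau_{2m}\rangle$ and $\langle\sigma_{2m}^{m}\tau_{2m}\rangle$.
\item The Sylow-local argument is used \emph{only} to show that $J_{D_{2m}/\cH_m}$ is quasi-invertible. Together with the coflabby resolution, this makes $E_m$ invertible (Proposition \ref{prop:fmiv}).
\item The upgrade to stably permutation does not come from a permuted basis of $E_m$; it comes from Lenstra's splitting lemma (Lemma \ref{lem:lstr}). The paper writes a surjection onto $E_m$ from $Q_m=\Z[D_{2m}/\langle\sigma_{2m}^{m}\tau_{2m}\rangle]\oplus\Z[D_{2m}/\langle\sigma_{2m}\rangle]\oplus\Z[D_{2m}/\langle\sigma_{2m}^{2},\sigma_{2m}^{m}\tau_{2m}\rangle]$. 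It is built from the generator $x_1$ and from $E_m^{\langle\sigma_{2m}^{2}\rangle}$, computed by reduction to $m=1$; this downward reduction is the correct use of Lemma \ref{lem:qtfx}.
\item The kernel of this surjection is $\Z[D_{2m}/\langle\sigma_{2m}^{2},\sigma_{2m}^{m}\tau_{2m}\rangle]$, hence coflabby. Invertibility of $E_m$ forces the sequence to split, giving $E_m\oplus\Z[D_{2m}/\langle\sigma_{2m}^{2},\sigma_{2m}^{m}\tau_{2m}\rangle]\cong Q_m$ (Proposition \ref{prop:emsp}).
\end{itemize}
Without some such device --- a local argument for invertibility, plus a permutation presentation of the kernel whose own kernel is coflabby --- your plan has no way to reach stable permutation.
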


In the rest of this section, write $I_{m}=I_{D_{2m}/\cH_{m}}$, and regard $I_{m}$ as a $D_{2m}$-sublattice of $\Z[D_{2m}/\langle \sigma_{2m}^{m}\rangle]\oplus \Z[D_{2m}/\langle \tau_{2m}\rangle]$. Moreover, put
\begin{equation*}
D^{(m)}:=\langle \sigma_{2m}^{2},\tau_{2m}\rangle, 
\end{equation*}
which is a subgroup of index $2$ in $D_{2m}$. Then, our strategy for the proof is as follows. 
\begin{enumerate}
\item[Step 1:\hspace{-20pt}] \hspace{20pt}Construct a specific exact sequence of $D_{2m}$-lattices
\begin{equation}\label{eq:exsq}
    0\rightarrow E_{m}\rightarrow R_{m}\rightarrow I_{m}\rightarrow 0,
\end{equation}
 \hspace{20pt} which should be a coflabby resolution of $I_{m}$ (Corollary \ref{cor:eris}). 
\item[Step 2:\hspace{-20pt}] \hspace{20pt} Prove that $E_{m}$ is invertible (Proposition \ref{prop:fmiv}). 
\item[Step 3:\hspace{-20pt}] \hspace{20pt} Confirm that $E_{m}\oplus \Z[D_{2m}/\langle \sigma_{2m}^{2} \rangle]$ is a permutation $D_{2m}$-lattice (Proposition \ref{prop:emsp}).
\item[Step 4:\hspace{-20pt}] \hspace{20pt} The dual of the exact sequence 
    \begin{equation*}
        0\rightarrow E_{m}\oplus \Z[D_{2m}/\langle \sigma_{2m}^{2}\rangle]\rightarrow R_{m}\oplus \Z[D_{2m}/\langle \sigma_{2m}^{2}\rangle] \rightarrow I_{m}\rightarrow 0
\end{equation*}
 \hspace{20pt} induced by \eqref{eq:exsq} implies the desired assertion. 
\end{enumerate}

We first perform Step 1. Consider three homomorphisms of $D_{2m}$-lattices as follows: 
\begin{gather*}
\psi_{m,1}\colon R_{m,1}:=\Z[D_{2m}]\rightarrow I;\,1\mapsto (1,-1);\\
\psi_{m,2}\colon R_{m,2}:=\Z[D_{2m}/\langle \sigma_{2m}^{m}\rangle]\rightarrow I;\,1\mapsto (1+\sigma_{2m}^{2},-(1+\sigma_{2m}^{m}));\\
\psi_{m,3}\colon R_{m,3}:=\Z \rightarrow I;\,1\mapsto ((1+\cdots+\sigma_{2m}^{m-1})(1+\tau_{2m}),-(1+\cdots+\sigma_{2m}^{2m-1})). 
\end{gather*}
Define a $\Z[D_{2m}]$-homomorphism $\Psi_{m}$ from $R_{m}:=R_{m,1}\oplus R_{m,2} \oplus R_{m,3}$ to $I_{m}$ to be
\begin{equation*}
R_{m} \rightarrow I_{m};\,(x_{1},x_{2},x_{3})\mapsto \psi_{m,1}(x_{1})+\psi_{m,2}(x_{2})+\psi_{m,3}(x_{3}). 
\end{equation*}
On the other hand, we denote by $I_{m,0}$ the image of $I_{D_{2m}/\langle \sigma_{2m}^{m}\rangle}$ under the canonical injection
\begin{equation*}
\Z[D_{2m}/\langle \sigma_{2m}^{m}\rangle] \hookrightarrow \Z[D_{2m}/\langle \sigma_{2m}^{m}\rangle] \oplus \Z[D_{2m}/\langle \tau_{2m}\rangle];\,x\mapsto (x,0). 
\end{equation*}
It is clear that $I_{m,0}$ is contained in $I_{m}$. 

\begin{lem}\label{lem:zfsj}
The image of $P^{\langle \sigma_{2m}^{m}\rangle}$ under $\Psi_{m}$ contains $I_{m,0}$. 
\end{lem}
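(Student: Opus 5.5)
We read $P^{\langle \sigma_{2m}^{m}\rangle}$ as $R_{m}^{\langle \sigma_{2m}^{m}\rangle}$ and write $\sigma:=\sigma_{2m}$, $\tau:=\tau_{2m}$. The plan is to exhibit explicit elements of $R_{m}^{\langle\sigma^{m}\rangle}$ whose images under $\Psi_{m}$ generate $I_{m,0}$ as a $D_{2m}$-module.

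\textbf{Reduction to two generators.} Since $\sigma^{m}$ is central in $D_{2m}$, the fixed module $R_{m}^{\langle\sigma^{m}\rangle}$ is a $D_{2m}$-submodule, so its image under the $D_{2m}$-map $\Psi_{m}$ is a $D_{2m}$-submodule $V$ of $I_{m}$. Its three pieces are as follows.
\begin{itemize}
\item The fixed part of $R_{m,1}=\Z[D_{2m}]$ is $(1+\sigma^{m})\Z[D_{2m}]$.
\item $R_{m,2}=\Z[D_{2m}/\langle\sigma^{m}\rangle]$ and $R_{m,3}=\Z$ are fixed entirely, because $\sigma^{m}$ is central and hence acts trivially on $D_{2m}/\langle\sigma^{m}\rangle$.
\end{itemize}
The augmentation ideal $I_{D_{2m}/\langle\sigma^{m}\rangle}$ is generated as a $D_{2m}$-module by $(g-1)e$ for $g$ in a generating set, where $e$ is the trivial coset. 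Hence it suffices to show $(\sigma-1,0)\in V$ and $(\tau-1,0)\in V$.

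\textbf{The rotation generator.} First compute
\begin{equation*}
\Psi_{m}(1+\sigma^{m},-1,0)=\bigl(2-(1+\sigma^{2}),\,-(1+\sigma^{m})+(1+\sigma^{m})\bigr)=(1-\sigma^{2},0).
\end{equation*}
This uses that $(1+\sigma^{m})e=2e$ in $\Z[D_{2m}/\langle\sigma^{m}\rangle]$. Because $m$ is odd, the image of $\sigma^{2}$ generates the image of $\langle\sigma\rangle$ in $D_{2m}/\langle\sigma^{m}\rangle$. Thus $(1-\sigma^{j},0)\in V$ for all $j$, and in particular $(1-\sigma,0)\in V$.

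\textbf{The reflection generator.} We produce two multiples of $(1-\tau,0)$ modulo $V_{0}:=\Z[D_{2m}](1-\sigma,0)\subset V$.
\begin{itemize}
\item Compute $\Psi_{m}(1+\sigma^{m},-\tau,0)$. In $\Z[D_{2m}/\langle\tau\rangle]$ we have $\tau\sigma^{m}\langle\tau\rangle=\sigma^{m}\langle\tau\rangle$, so the second coordinates cancel. The result is $(2-\tau-\sigma^{-2}\tau,0)$, which is congruent to $2(1-\tau)e$ modulo $V_{0}$.
\item Take $x:=\sum_{i=0}^{m-1}\sigma^{i}(1+\sigma^{m})\in R_{m,1}^{\langle\sigma^{m}\rangle}$. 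Then $\psi_{m,1}(x)=\bigl(2\sum_{i<m}\sigma^{i},\,-\sum_{i<2m}\sigma^{i}\bigr)$, and therefore
\begin{equation*}
\Psi_{m}(x,0,-1)=\Bigl(\sum_{i<m}\sigma^{i}(1-\tau),\,0\Bigr).
\end{equation*}
Since $\sum_{i<m}\sigma^{i}-m\in\Z[D_{2m}](1-\sigma)$, this is congruent to $m(1-\tau)e$ modulo $V_{0}$.
\end{itemize}
As $\gcd(2,m)=1$, these two elements give $(1-\tau,0)\in V$, which completes the argument.

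The main obstacle is this last step. The generators $\psi_{m,1}$ and $\psi_{m,2}$ alone yield only $2(1-\tau)$, and one must see that $\psi_{m,3}$ supplies the odd multiple $m(1-\tau)$. The remaining work is routine but requires care with the coset identifications in $\Z[D_{2m}/\langle\tau\rangle]$.
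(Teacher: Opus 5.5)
Your proof is correct. The reduction to the two generators and the rotation step are essentially the paper's. With $\sigma=\sigma_{2m}$ and $\tau=\tau_{2m}$, the paper's preimage $(-\sigma^{-2}(1+\sigma^{m}),\sigma^{-2},0)$ of $(1-\sigma^{m-2},0)$ is your $(1+\sigma^{m},-1,0)$ multiplied by $-\sigma^{-2}$.

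The routes diverge at the reflection generator. The paper writes down one explicit preimage of $(1-\sigma^{m-2}\tau,0)$ of the form $(u_{1},y,0)$, where $u_{1}=(\sum_{i<2m}\sigma^{i})(1-\tau)$ and $y\in R_{m,2}$. It splits into the cases $m\equiv 1,3\bmod 4$, and the $R_{m,3}$-component is zero. You instead produce $2(1-\tau)e$ and $m(1-\tau)e$ modulo $V_{0}$ and conclude from $\gcd(2,m)=1$. This avoids the case split, and it is in fact the mechanism that is needed.

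To see why, compose the first coordinate with $\Z[D_{2m}/\langle\sigma^{m}\rangle]\to\Z[D_{2m}/\langle\sigma\rangle]\cong\Z[C_{2}]$. On the target, $1+\sigma^{m}$ and $1+\sigma^{2}$ act as $2$, so $\Psi_{m}(R_{m,1}^{\langle\sigma^{m}\rangle}\oplus R_{m,2})$ lands in $2\Z[C_{2}]$. On the other hand, $(1-\sigma^{m-2}\tau)e\mapsto 1-\bar{\tau}$. Hence any preimage in $R_{m}^{\langle\sigma^{m}\rangle}$ needs an odd $R_{m,3}$-component. So the paper's displayed formula cannot hold as written: for $m=1$ it gives $\Psi_{1}(u_{1},0,0)=(2(1-\tau_{2})e,0)$. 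The same parity map makes your closing remark, that $\psi_{m,1}$ and $\psi_{m,2}$ alone reach only $2(1-\tau)$, rigorous.

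One small point you should state explicitly. The congruence you derive from $\sum_{i<m}\sigma^{i}-m\in\Z[D_{2m}](1-\sigma)$ uses that this left ideal is two-sided. That holds because it is the kernel of $\Z[D_{2m}]\to\Z[D_{2m}/\langle\sigma\rangle]$, with $\langle\sigma\rangle$ normal. This is what lets the right factor $1-\tau$ be absorbed.
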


\begin{proof}
Since $m$ is odd, $m-2\bmod 2m$ is a unit in $\Z/2m$. Hence, $I_{m,0}$ is generated by $(1-\sigma_{2m}^{m-2},0)$ and $(1-\sigma_{2m}^{m-2}\tau_{2m},0)$ as a $D_{2m}$-lattice. On the other hand, one has an equality
\begin{equation*}
(1-\sigma_{2m}^{m-2},0)=\Psi_{m}(-\sigma_{2m}^{-2}(1+\sigma_{2m}^{m}),\sigma_{2m}^{-2},0)\in \Psi_{m}(R^{\langle \sigma_{2m}^{m}\rangle}). 
\end{equation*}
Furthermore, if we put 
\begin{equation*}
u_{1}:=(1+\sigma_{2m}+\cdots+\sigma_{2m}^{2m-1})(1-\tau_{2m})\in \Z[D_{2m}]^{\langle \sigma_{2m}\rangle}, 
\end{equation*}
then we have the following: 
\begin{equation*}
(1-\sigma_{2m}^{m-2}\tau_{2m},0)=
\begin{cases}
\left(u_{1},\left(\sum_{j=0}^{(m-5)/4}\sigma_{2m}^{4j}\right)(\sigma_{2m}+\sigma_{2m}^{2})(\tau_{2m}-1),0\right)&\text{if }m\equiv 1 \bmod 4;\\
\left(u_{1},\sigma^{-1}+\left(\sum_{j=0}^{(m-7)/4}\sigma_{2m}^{4j}\right)(\sigma_{2m}^{2}+\sigma_{2m}^{3})(\tau_{2m}-1),0\right)&\text{if }m\equiv 3 \bmod 4. 
\end{cases}
\end{equation*}
In particular, one has $(1-\sigma_{2m}^{m-2}\tau_{2m},0)\in \Psi_{m}(R^{\langle \sigma_{2m}^{m}\rangle})$, and hence the proof is complete. 
\end{proof}

By Lemma \ref{lem:zfsj}, we obtain the following. 

\begin{cor}\label{cor:eris}
We define $E_{m}$ as the kernel of $\Psi_{m}$. Then, there is an exact sequence of $D_{2m}$-lattices
\begin{equation}\label{eq:cdcf}
0\rightarrow E_{m} \rightarrow R_{m} \xrightarrow{\Psi_{m}} I_{m} \rightarrow 0. 
\end{equation}
\end{cor}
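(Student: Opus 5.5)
The plan is to verify directly that $\Psi_{m}$ is a well-defined homomorphism of $D_{2m}$-lattices $R_{m}\rightarrow I_{m}$ and that it is surjective. Then $E_{m}:=\Ker(\Psi_{m})$ is a $D_{2m}$-submodule of the lattice $R_{m}$, hence itself a $D_{2m}$-lattice, and \eqref{eq:cdcf} is exact by construction. Whether \eqref{eq:cdcf} is actually a coflabby resolution is left to the later steps (invertibility of $E_{m}$); the corollary only asserts exactness.

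\emph{Well-definedness.} Since $R_{m,1}=\Z[D_{2m}]$ is free, $\psi_{m,1}$ is determined by any choice of image of $1$. For $\psi_{m,2}$ and $\psi_{m,3}$ I check that the proposed images of the generator are fixed by the stabilizer of the generator.
\begin{itemize}
\item For $\psi_{m,2}$ the stabilizer is $\langle\sigma_{2m}^{m}\rangle$. The element $\sigma_{2m}^{m}$ is central in $D_{2m}$, so it acts trivially on $\Z[D_{2m}/\langle\sigma_{2m}^{m}\rangle]$. On $\Z[D_{2m}/\langle\tau_{2m}\rangle]$ it swaps the two cosets $\langle\tau_{2m}\rangle$ and $\sigma_{2m}^{m}\langle\tau_{2m}\rangle$, so $1+\sigma_{2m}^{m}$ is fixed.
\item For $\psi_{m,3}$ the stabilizer is all of $D_{2m}$. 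The elements $\sigma_{2m}^{i}\tau_{2m}^{\epsilon}$ with $0\le i<m$ and $\epsilon\in\{0,1\}$ form a set of representatives of $D_{2m}/\langle\sigma_{2m}^{m}\rangle$. Likewise $\sigma_{2m}^{j}$ with $0\le j<2m$ represent $D_{2m}/\langle\tau_{2m}\rangle$. So both coordinates of $\psi_{m,3}(1)$ are norm elements and are $D_{2m}$-invariant.
\end{itemize}
Finally, each image lies in $I_{m}$, because the total augmentation vanishes in every case: $1-1$, $2-2$, and $2m-2m$.

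\emph{Surjectivity.} Let $p_{2}\colon I_{m}\rightarrow \Z[D_{2m}/\langle\tau_{2m}\rangle]$ be the second projection. Its kernel is exactly $I_{m,0}$, since an element $(x,0)$ lies in $I_{m}$ if and only if $\varepsilon(x)=0$.

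Next, $p_{2}$ is onto. Given $y$, the element $(-\varepsilon(y)\cdot 1,\,y)$ lies in $I_{m}$ and maps to $y$.

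Now $p_{2}\circ\Psi_{m}$ is also onto, since $p_{2}(\psi_{m,1}(1))=-1$ generates $\Z[D_{2m}/\langle\tau_{2m}\rangle]$ as a $D_{2m}$-module. Lemma \ref{lem:zfsj} gives $I_{m,0}=\Ker(p_{2})\subset\Ima(\Psi_{m})$. Combining the two facts yields $\Ima(\Psi_{m})=I_{m}$.

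The only substantive input is Lemma \ref{lem:zfsj}, which is already established. The main point needing care is therefore the bookkeeping with coset representatives in the $\psi_{m,3}$ check; I do not expect a real obstacle.
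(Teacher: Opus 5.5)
Your proposal is correct and follows the paper's route. The paper's proof is just ``by Lemma \ref{lem:zfsj}'', and your argument supplies the step it leaves implicit: check well-definedness, then combine $\Ker(p_{2})=I_{m,0}\subset\Ima(\Psi_{m})$ with the surjectivity of $p_{2}\circ\psi_{m,1}$. You also correctly use only the weaker consequence $I_{m,0}\subset\Ima(\Psi_{m})$ of that lemma, not its finer claim about $\langle\sigma_{2m}^{m}\rangle$-invariants.
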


From here, we move on to Step 2. To accomplish it, we demonstrate that \eqref{eq:cdcf} is a coflabby resolution, and use Theorem \ref{thm:qips}. 

\begin{lem}[{\cite[Lemma 6.4]{Hasegawa}}]\label{lem:lmis}
Consider a commutative diagram of finite free abelian groups
\begin{equation*}
\xymatrix{
0\ar[r]& M_{1}\ar[r] \ar[d]^{f_{1}} & M_{2} \ar[r]\ar[d]^{f_{2}}& M_{3} \ar[r]\ar[d]^{f_{3}}& 0\\
0\ar[r]& M'_{1}\ar[r] & M'_{2} \ar[r] & M'_{3}, &
}
\end{equation*}
where the horizontal sequences are exact. We further assume that 
\begin{itemize}
    \item $\rk_{\Z}(M_{1})=\rk_{\Z}(M'_{1})$; 
    \item $f_{2}$ and $f_{3}$ are injective; and 
    \item the cokernel of $f_{2}$ is torsion-free. 
\end{itemize}
Then the homomorphism $f_{1}$ is an isomorphism. 
\end{lem}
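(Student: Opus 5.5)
The plan is a direct diagram chase in three steps: show $f_{1}$ is injective, show its cokernel is finite, and show that cokernel embeds into the cokernel of $f_{2}$. Since the bottom row is only left exact, I avoid the snake lemma and argue by hand. Write $\iota\colon M_{1}\to M_{2}$, $\pi\colon M_{2}\to M_{3}$, $\iota'\colon M'_{1}\to M'_{2}$ and $\pi'\colon M'_{2}\to M'_{3}$ for the horizontal maps, so that $\iota'\circ f_{1}=f_{2}\circ\iota$ and $\pi'\circ f_{2}=f_{3}\circ\pi$.

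\emph{Injectivity.} If $f_{1}(x)=0$, then $f_{2}(\iota(x))=\iota'(f_{1}(x))=0$. Since $f_{2}$ and $\iota$ are injective, $x=0$. Hence $f_{1}$ is injective.

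\emph{Finiteness of the cokernel.} $M_{1}$ and $M'_{1}$ are finite free abelian groups of the same rank $\rk_{\Z}(M_{1})=\rk_{\Z}(M'_{1})$. An injective homomorphism between them has image of full rank, so $\operatorname{Coker}(f_{1})$ is a finite abelian group.

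\emph{The key step: $\operatorname{Coker}(f_{1})$ injects into $\operatorname{Coker}(f_{2})$.} The map $\iota'$ induces a homomorphism $\operatorname{Coker}(f_{1})\to \operatorname{Coker}(f_{2})$, and I show it is injective. Take $y\in M'_{1}$ with $\iota'(y)=f_{2}(z)$ for some $z\in M_{2}$. Then
\begin{equation*}
f_{3}(\pi(z))=\pi'(f_{2}(z))=\pi'(\iota'(y))=0,
\end{equation*}
so $\pi(z)=0$ by injectivity of $f_{3}$. Exactness of the top row gives $z=\iota(x)$ for some $x\in M_{1}$. Therefore
\begin{equation*}
\iota'(f_{1}(x))=f_{2}(\iota(x))=f_{2}(z)=\iota'(y),
\end{equation*}
and injectivity of $\iota'$ gives $y=f_{1}(x)$, i.e.\ the class of $y$ in $\operatorname{Coker}(f_{1})$ is zero.

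\emph{Conclusion.} $\operatorname{Coker}(f_{1})$ is finite and embeds into the torsion-free group $\operatorname{Coker}(f_{2})$, so it is zero. Thus $f_{1}$ is surjective, and together with injectivity it is an isomorphism. The only step needing care is the chase in the key step; the rest is bookkeeping with ranks.
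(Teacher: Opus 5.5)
Your proof is correct, and it is the natural argument for this lemma (the paper only cites it from \cite{Hasegawa}): $f_{1}$ is injective, $\operatorname{Coker}(f_{1})$ is finite by the rank equality, and $\operatorname{Coker}(f_{1})$ embeds into the torsion-free group $\operatorname{Coker}(f_{2})$, so it vanishes. One minor correction to your setup is that you did not need to avoid the snake lemma, since it applies as is to a right exact top row and a left exact bottom row; its segment $0=\Ker(f_{3})\to\operatorname{Coker}(f_{1})\to\operatorname{Coker}(f_{2})$ is exactly your hand chase.
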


The following will be used not only Step 2 but also Step 3. 

\begin{lem}\label{lem:rsdg}
We define an element $x_{1}$ of $R_{m}$ as follows: 
\begin{equation*}
x_{1}:=
\begin{cases}
\left(1+\sigma_{2m}^{m}\tau_{2m},(1+\tau_{2m})\sum_{j=1}^{(m-1)/4}(\sigma_{2m}^{(-1)^{j-1}(2j-1)}+\sigma_{2m}^{(-1)^{j-1}\cdot 2j}),-1\right)&\text{if }m\equiv 1\bmod 4;\\
\left(1+\sigma_{2m}^{m}\tau_{2m},-(1+\tau_{2m})\sum_{j=0}^{(m-3)/4}(\sigma_{2m}^{(-1)^{j}\cdot 2j}+\sigma_{2m}^{(-1)^{j}(2j+1)}),1\right)&\text{if }m\equiv 3\bmod 4. 
\end{cases}
\end{equation*}
\begin{enumerate}
\item We have $x_{1}\in E_{m}^{\langle \sigma_{2m}^{m}\tau_{2m} \rangle}$ and $\langle x_{1}\rangle_{\Z[D_{2m}]}\cong \Z[D_{2m}/\langle \sigma_{2m}^{m}\tau_{2m} \rangle]$. 
\item There is an isomorphism of $D_{m}$-lattices
\begin{equation*}
E_{m}\cong \Z[D^{(m)}/\langle \sigma_{2m}^{2}\rangle]\oplus \Z[D^{(m)}]
\end{equation*}
such that the composite
\begin{equation*}
\Z[D^{(m)}]\xrightarrow{c\mapsto (0,c)} \Z[D^{(m)}/\langle \sigma_{2m}^{2}\rangle]\oplus \Z[D^{(m)}]\cong E_{m}
\end{equation*}
is given by $1\mapsto x_{1}$. In particular, $E_{m}$ is permutation as a $D^{(m)}$-lattice, and 
\begin{equation*}
E_{m}=E_{m}^{\langle \sigma_{2m}^{2}\rangle}+\langle x_{1}\rangle_{\Z[D_{2m}]}. 
\end{equation*}
\end{enumerate}
\end{lem}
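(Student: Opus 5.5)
For (i), I will verify two things directly: that $x_{1}$ lies in $E_{m}$, and that it is fixed by $\sigma_{2m}^{m}\tau_{2m}$.

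Write $\sigma=\sigma_{2m}$ and $\tau=\tau_{2m}$. In the first coordinate, $1+\sigma^{m}\tau\in\Z[D_{2m}]$ is visibly fixed by left multiplication by $\sigma^{m}\tau$, since that element is an involution. The second coordinate lives in $\Z[D_{2m}/\langle\sigma^{m}\rangle]$, where $\sigma^{m}$ acts trivially; there it has the form $(1+\tau)y$ with $y$ in the commutative group ring of $\langle\sigma\rangle$. So we need $\sigma^{m}\tau(1+\tau)y=(1+\tau)y$ modulo $\langle\sigma^{m}\rangle$. This follows because $\sigma^{m}$ is central and fixes the coset, and $\tau(1+\tau)=1+\tau$. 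The third coordinate lies in $\Z$, which is trivially fixed.

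To show $\Psi_{m}(x_{1})=0$, I will compute in each of the two coordinates of $\Z[D_{2m}/\langle\sigma^{m}\rangle]\oplus\Z[D_{2m}/\langle\tau\rangle]$, using the explicit $\psi_{m,i}$. The second coordinate collects $-(1+\sigma^{m}\tau)$ from $\psi_{m,1}$, then $-(1+\tau)y(1+\sigma^{m})$ from $\psi_{m,2}$, then $\mp$ the full orbit sum from $\psi_{m,3}$. Reducing modulo $\tau$ on the right, the telescoping sum in $y$ is designed so that $(1+\sigma^{m})y$ covers every power of $\sigma$ except $1$ and $\sigma^{m}$; the alternating exponents $\pm(2j-1),\pm2j$ are chosen to guarantee this. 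The first coordinate is handled the same way, with $\sigma^{m}$ killed and the $(1+\sigma^{2})$ factor producing a telescoping sum against $(1+\cdots+\sigma^{m-1})(1+\tau)$. I would split into the cases $m\equiv1$ and $m\equiv3\bmod 4$, exactly as in the definition.

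For the module statement $\langle x_{1}\rangle\cong\Z[D_{2m}/\langle\sigma^{m}\tau\rangle]$, the induced map is surjective onto $\langle x_{1}\rangle$, so only injectivity needs checking. Project to the first coordinate $\Z[D_{2m}]$. The element $1+\sigma^{m}\tau$ generates a free $\Z$-module with basis $\{g(1+\sigma^{m}\tau)\}$, where $g$ runs over coset representatives. Hence the projection is injective on $\Z[D_{2m}/\langle\sigma^{m}\tau\rangle]$, and the claim follows.

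For (ii), restrict to $D^{(m)}=\langle\sigma^{2},\tau\rangle\cong D_{m}$. Since $m$ is odd, $D_{2m}=D^{(m)}\times\langle\sigma^{m}\rangle$, and Mackey restriction gives the following $D^{(m)}$-lattices:
\begin{itemize}
\item $R_{m,1}\cong\Z[D^{(m)}]^{2}$;
\item $R_{m,2}\cong\Z[D^{(m)}]$;
\item $R_{m,3}\cong\Z$;
\item $I_{m}$ becomes $I_{D^{(m)}/\{\{1\},\langle\tau\rangle\}}$ twisted appropriately; by Lemma~\ref{lem:rded}, or the splitting in Theorem~\ref{thm:gcdo}, its rank is known.
\end{itemize}
The candidate sublattice is $\Z[D^{(m)}/\langle\sigma^{2}\rangle]\oplus\Z[D^{(m)}]$, embedded by sending the generator of the first summand to a suitable explicit $\langle\sigma^{2}\rangle$-invariant element $x_{0}\in E_{m}$ and sending $1\mapsto x_{1}$ in the second. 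For $x_{0}$ I would look for an element built from norm elements of $\langle\sigma^{2}\rangle$ in each $R_{m,i}$, killed by $\Psi_{m}$.

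Then I apply Lemma~\ref{lem:lmis}. The map $f_{1}$ is the inclusion of $\Z[D^{(m)}/\langle\sigma^{2}\rangle]\oplus\Z[D^{(m)}]$ into $E_{m}$. I will check the following:
\begin{itemize}
\item the ranks agree, namely $2+2m$ against $\operatorname{rk}R_{m}-\operatorname{rk}I_{m}=(4m+2m+1)-(2m+2m-1)=2m+2$;
\item the sublattice maps injectively into $R_{m}$, with torsion-free cokernel;
\item the middle vertical arrow is an injection into $R_{m}$ with torsion-free cokernel, which I get by choosing the complement as a permutation summand such as $\Z[D^{(m)}]\oplus\ldots$.
\end{itemize}
The last identity in (ii) then follows because $\Z[D^{(m)}/\langle\sigma^{2}\rangle]$ maps into $E_{m}^{\langle\sigma^{2}\rangle}$.

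I expect the main obstacle to be the torsion-freeness of the cokernel, together with the explicit telescoping verification. I would first try to show that the cokernel is torsion-free by exhibiting an explicit $\Z$-basis completion.
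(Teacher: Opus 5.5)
Your part (i) is essentially the paper's argument: membership in $E_{m}^{\langle \sigma_{2m}^{m}\tau_{2m}\rangle}$ is checked by direct computation, and injectivity of $\Z[D_{2m}/\langle \sigma_{2m}^{m}\tau_{2m}\rangle]\to\langle x_{1}\rangle_{\Z[D_{2m}]}$ comes from projecting to $R_{m,1}$. One correction to your heuristic there. Modulo $\langle\tau_{2m}\rangle$, the sum you must control is $(1+\tau_{2m})(1+\sigma_{2m}^{m})y$, not $(1+\sigma_{2m}^{m})y$. Moreover, for $m\equiv 3\bmod 4$, where the signs in $x_{1}$ flip, this sum equals the full sum of $\langle\sigma_{2m}\rangle$ \emph{plus} $1+\sigma_{2m}^{m}$, rather than minus it.

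For (ii) you take a different route from the paper, and it is sound, but as written it stops exactly where the content lies.

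\textbf{The paper's route.} The paper splits $R_{m,1}=\Z[D^{(m)}]\oplus\sigma_{2m}^{m}\Z[D^{(m)}]$ and drops the second half to get $R'_{m}$. Claim (a) shows that $R'_{m}$ still surjects onto $I_{m}$, so $E'_{m}=E_{m}\cap R'_{m}$ has rank $2$. Lemma \ref{lem:lmis}, applied to $\langle\sigma_{2m}^{2}\rangle$-invariants, shows that $E'_{m}$ is $\langle\sigma_{2m}^{2}\rangle$-fixed, and Claim (b) gives $E'_{m}\cong\Z[D^{(m)}/\langle\sigma_{2m}^{2}\rangle]$. Finally, projection onto the dropped half gives $0\to E'_{m}\to E_{m}\to\Z[D^{(m)}]\to0$, which is split by $x_{1}$.

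\textbf{Your route and what is missing.} You instead name both generators and conclude by rank plus saturation. However, you never produce $x_{0}$, and you leave the torsion-freeness of the cokernel open. Your third bullet about a permutation complement is not needed. It suffices to take $M_{2}=M_{1}$ and $M_{3}=0$ in Lemma \ref{lem:lmis}; that is, a full-rank sublattice of $E_{m}$ that is saturated in $R_{m}$ equals $E_{m}$. Note also that $\rk E_{m}=2m+2$ uses the surjectivity of $\Psi_{m}$ from Corollary \ref{cor:eris}.

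\textbf{Filling the gap.} Both missing pieces are short.
\begin{itemize}
\item \emph{The element $x_{0}$.} Put $N:=\sum_{i=0}^{m-1}\sigma_{2m}^{2i}$ and $x_{0}:=(N(\tau_{2m}-1),\,N,\,-1)\in R_{m}$; this is the paper's $x_{0}$ from Claim (b) with the $\langle\sigma_{2m}^{2}\rangle$-norm inserted. Then $\sigma_{2m}^{2}x_{0}=x_{0}$.
\item \emph{$x_{0}\in E_{m}$.} We have $\Psi_{m}(x_{0})=0$ for four reasons: $N\langle\sigma_{2m}^{m}\rangle$ is the sum of all rotation cosets, $N\tau_{2m}\langle\sigma_{2m}^{m}\rangle$ is the sum of all reflection cosets, $N(1+\sigma_{2m}^{m})$ is the sum of all of $\langle\sigma_{2m}\rangle$, and $N(\tau_{2m}-1)\langle\tau_{2m}\rangle=0$.
\item \emph{Saturation.} Suppose $ux_{0}+vx_{1}\in nR_{m}$, with $u=\alpha+\beta\tau_{2m}\in\Z[D^{(m)}/\langle\sigma_{2m}^{2}\rangle]$ and $v\in\Z[D^{(m)}]$. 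The $\sigma_{2m}^{m}\Z[D^{(m)}]$-part of the $R_{m,1}$-coordinate is $\sigma_{2m}^{m}v\tau_{2m}$, since $x_{0}$ contributes nothing there; hence $v\in n\Z[D^{(m)}]$. Then the $R_{m,2}$-coordinate of $\alpha x_{0}+\beta\tau_{2m}x_{0}$ is $\alpha N+\beta N\tau_{2m}$. This is a combination of two $0/1$-vectors with disjoint supports, so $n\mid\alpha$ and $n\mid\beta$.
\item \emph{Injectivity.} The same argument applied to $ux_{0}+vx_{1}=0$ gives injectivity of $(u,v)\mapsto ux_{0}+vx_{1}$.
\end{itemize}
So the image is a saturated sublattice of $E_{m}$ of rank $2m+2=\rk E_{m}$, hence equal to $E_{m}$, and the final identity follows as you say.

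\textbf{Comparison.} Your route bypasses Claim (a) and the invariants diagram, and it gets the splitting for free. The cost is that you must guess $x_{0}$; the paper's passage through $E_{m}\cap R'_{m}$ is what shows that such an element exists and where to look for it. The projection onto $\sigma_{2m}^{m}\Z[D^{(m)}]$ that drives your saturation step is exactly the paper's map $E_{m}\to\Z[D^{(m)}]$.
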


\begin{proof}
(i): The assertion $x_{1}\in E_{m}^{\langle \sigma_{2m}^{m}\tau_{2m} \rangle}$ follows from direct computation. In particular, we obtain a surjection
\begin{equation*}
\Z[D_{2m}/\langle \sigma_{2m}^{m}\tau_{2m} \rangle]\twoheadrightarrow \langle x_{1}\rangle_{\Z[D_{2m}]};\,1\mapsto x_{1}. 
\end{equation*}
On the other hand, since $x_{1}\in E_{m}$, the canonical projection $R_{m}\twoheadrightarrow R_{m,1}$ induces a surjection
\begin{equation}\label{eq:fmds}
E_{m}\twoheadrightarrow \langle 1+\sigma_{2m}^{m}\tau_{2m}\rangle_{\Z[D_{m}]}. 
\end{equation}
The right-hand side of \eqref{eq:fmds} is isomorphic to $\Z[D_{2m}/\langle \sigma_{2m}^{m}\tau_{2m}\rangle]$. Hence, the natural inclusion $\langle x_{1}\rangle_{\Z[D_{2m}]}\hookrightarrow E_{m}$ is an isomorphism. This concludes the second assertion. 

(ii): Consider three isomorphisms of $D^{(m)}$-lattices as follows: 
\begin{gather*}
r_{m,1}\colon \Z[D^{(m)}]^{\oplus 2}\xrightarrow{\cong}\Z[D_{2m}];\,(a_{1},a_{2})\mapsto a_{1}+\sigma_{2m}^{m}a_{2};\\
r_{m,2}\colon \Z[D^{(m)}]\xrightarrow{\cong} \Z[D_{2m}/\langle \sigma_{2m}^{m}\rangle];\,b\mapsto b;\\
r_{m,3}\colon \Z[D^{(m)}/\langle \tau_{2m}\rangle]^{\oplus 2}\xrightarrow{\cong}\Z[D_{2m}/\langle \tau_{2m}\rangle];\,(c_{1},c_{2})\mapsto c_{1}+\sigma_{2m}^{m}c_{2}. 
\end{gather*}
Put $I'_{m}:=I_{D^{(m)}/\cH'_{m}}$, where $\cH'_{m}:=\{\{1\},\langle \tau_{2m}\rangle,\langle \tau_{2m}\rangle\}$. Then $r_{m,1}$, $r_{m,2}$ and $r_{m,3}$ induce commutative diagrams of $D^{(m)}$-lattices
\begin{gather*}
\xymatrix@C=115pt{
R_{m,1}^{(0)}\oplus R_{m,1}^{(1)}\ar[r]^{\hspace{20pt}(a_{1},a_{2})\mapsto \psi_{m,1}^{(0)}(a_{1})+\psi_{m,1}^{(1)}(a_{2})}\ar[d]^{\cong}&I'_{m}\ar[d]^{\cong}\\
R_{m,1}\ar[r]^{\psi_{m,1}}& I_{m},}\\
\xymatrix@C=45pt{
R'_{m,2} \ar[r]^{\psi'_{m,2}}\ar[d]^{\cong}&I'_{m}\ar[d]^{\cong}\\
R_{m,2} \ar[r]^{\psi_{m,2}}& I_{m},
}\quad
\xymatrix@C=45pt{
R'_{m,3} \ar[r]^{\psi'_{m,3}}\ar@{=}[d]&I'_{m}\ar[d]^{\cong}\\
R_{m,3}\ar[r]^{\psi_{m,3}}& I_{m}. 
}
\end{gather*}
Here $\psi_{m,1}^{(0)}$, $\psi_{m,1}^{(1)}$, $\psi'_{m,2}$ and $\psi'_{m,3}$ are defined as follows: 
\begin{gather*}
\psi_{m,1}^{(0)}\colon R_{m,1}^{(0)}:=\Z[D^{(m)}]\rightarrow I'_{m};\,1\mapsto (1,-1,0);\\
\psi_{m,1}^{(1)}\colon R_{m,1}^{(1)}:=\Z[D^{(m)}]\rightarrow I'_{m};\,1\mapsto (1,0,-1);\\
\psi'_{m,2}\colon R'_{m,2}:=\Z[D^{(m)}]\rightarrow I'_{m};\,1\mapsto (1+\sigma_{2m}^{2},-1,-1);\\
\psi'_{m,3}\colon R'_{m,3}:=\Z \rightarrow I'_{m};\,1\mapsto \left(\sum_{i=0}^{m-1}\sigma_{2m}^{2}(1+\tau_{2m}),-\sum_{i=0}^{m-1}\sigma_{2m}^{2},-\sum_{i=0}^{m-1}\sigma_{2m}^{2}\right). 
\end{gather*}
\begin{claima}
The homomorphism $R_{m,1}^{(0)}\oplus R'_{m,2}\rightarrow I'_{m};\,(c_{1},c_{2})\mapsto \psi_{m,1}^{(0)}(c_{1})+\psi'_{m,2}(c_{2})$ is surjective. 
\end{claima}

\begin{proof}
It suffices to prove that $\Ima(\psi_{m,1}^{(0)})+\Ima(\psi'_{m,2})$ contains $(1-\tau_{2m})$ and $(1-\sigma_{2m}^{2})$. By direct computation, we have
\begin{equation*}
\psi_{m,1}^{(0)}(1-\tau_{2m})=(1-\tau_{2m},0,0),\quad \psi'_{m,2}(\sigma_{2m}^{2}(\tau_{2m}-1))-\psi_{m,1}^{(0)}(\tau_{2m}-1)=(1-\sigma_{2m}^{4},0,0). 
\end{equation*}
Since $2\bmod m$ is a unit in $\Z/m$, we obtain $(1-\sigma_{2m}^{2})\in \Ima(\psi_{m,1}^{(0)})+\Ima(\psi'_{m,2})$. 
\end{proof}

By Claim, $\psi_{m,1}^{(0)}$, $\psi'_{m,2}$ and $\psi'_{m,3}$ induce an exact sequence of $D^{(m)}$-lattices
\begin{equation*}
0\rightarrow E'_{m} \rightarrow R'_{m} \xrightarrow{\Psi'_{m}} I'_{m} \rightarrow 0, 
\end{equation*}
where $R'_{m}:=R_{m,1}^{(0)}\oplus R'_{m,2}\oplus R'_{m,3}$. Then we have $E'_{m}=E_{m}\cap R'_{m}$ and $\rk_{\Z}(E'_{m})=2$. Furthermore, Claim (a) for $m=1$ implies that $\Psi'_{m}$ induces a surjection $(R'_{m})^{\langle \sigma_{2m}^{2}\rangle}\twoheadrightarrow (I'_{m})^{\langle \sigma_{2m}^{2}\rangle}$. In particular, there is a commutative diagram
\begin{equation*}
\xymatrix{
0\ar[r]&(E'_{m})^{\langle \sigma_{2m}^{2}\rangle}\ar[r]\ar[d]&(R'_{m})^{\langle \sigma_{2m}^{2}\rangle}\ar[r]^{\Psi'_{m}}\ar[d]&(I'_{m})^{\langle \sigma_{2m}^{2}\rangle}\ar[r]\ar[d]&0\\
0\ar[r]&E'_{m}\ar[r]&R'_{m}\ar[r]^{\Psi'_{m}}&I'_{m}\ar[r]&0,
}
\end{equation*}
where the central and rightmost vertical maps are injective. Note that the cokernel of the central vertical map is torsion-free. Hence, Lemma \ref{lem:lmis} implies $E'_{m}=(E'_{m})^{\langle \sigma_{2m}\rangle}$. 
\begin{claimb}
There is an isomorphism of $D^{(m)}$-lattices $(E'_{m})^{\langle \sigma_{2m}\rangle}\cong \Z[D^{(m)}/\langle \sigma_{2m}^{2}\rangle]$. 
\end{claimb}

\begin{proof}
Let $x_{0}:=(\tau_{2m}-1,1,-1)\in R'_{m}$, then it lies in $(E'_{m})^{\langle \sigma_{2m}\rangle}$. This implies that the natural projection $R'_{m}\twoheadrightarrow R'_{m,2}$ induces a surjection 
\begin{equation*}
E'_{m} \twoheadrightarrow \Z[D^{(m)}/\langle \sigma_{2m}^{2} \rangle]. 
\end{equation*}
It is an isomorphism since $\rk_{\Z}(E'_{m})=2$. This completes the proof of Claim (b). 
\end{proof}

Since $E'_{m}=E_{m}\cap R_{m}$, one has an exact sequence of $D^{(m)}$-lattices
\begin{equation}\label{eq:fcfp}
0\rightarrow E'_{m}\rightarrow E_{m}\rightarrow \Z[D^{(m)}]\rightarrow 0. 
\end{equation}
Then $x_{1}$ produces a right splitting of \eqref{eq:fcfp}. In particular, we obtain an isomorphism of $D^{(m)}$-lattices
\begin{equation*}
E_{m}\cong E'_{m}\oplus \Z[D^{(m)}]. 
\end{equation*}
Hence, the proof is complete. 
\end{proof}

\begin{lem}\label{lem:flpp}
Let $G$ be a finite group, and
\begin{equation}\label{eq:abcf}
0\rightarrow E \rightarrow R \rightarrow M \rightarrow 0
\end{equation}
an exact sequence of $G$-lattices. Then it is a coflabby resolution of $M$ if and only if 
\begin{itemize}
    \item[(a)] $H^{1}(H,E)=0$; or
    \item[(b)] $R^{H}\rightarrow M^{H}$ is surjective,
\end{itemize}
for all subgroups of prime power orders in $G$. 
\end{lem}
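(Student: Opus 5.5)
The plan is to read the statement with $R$ a permutation $G$-lattice, as in the definition of a coflabby resolution, so that the only issue is whether $E$ is coflabby. I will show that for each single subgroup $H$ conditions (a) and (b) are equivalent, and that $H^{1}(H,E)=0$ for all subgroups $H$ follows from the same vanishing for subgroups of prime power order. I am assuming this reading of the hypotheses: if $R$ were not permutation, (a) would not suffice.

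\emph{Step 1: (a) and (b) are equivalent for each $H$.} Fix a subgroup $H$ of $G$ and take the long exact cohomology sequence of \eqref{eq:abcf} restricted to $H$:
\begin{equation*}
R^{H}\rightarrow M^{H}\rightarrow H^{1}(H,E)\rightarrow H^{1}(H,R).
\end{equation*}
Since $R$ is a permutation $G$-lattice, it is also permutation as an $H$-lattice (Mackey decomposition). It is therefore a direct sum of lattices $\Z[H/H']$. By Shapiro's lemma, $H^{1}(H,\Z[H/H'])\cong H^{1}(H',\Z)=\Hom(H',\Z)=0$. Hence $H^{1}(H,R)=0$. Exactness then gives $H^{1}(H,E)\cong \operatorname{Coker}(R^{H}\rightarrow M^{H})$, so (a) holds for $H$ if and only if (b) does.

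\emph{Step 2: reduction to subgroups of prime power order.} The sequence is a coflabby resolution exactly when $H^{1}(H,E)=0$ for every subgroup $H$ of $G$. This trivially implies the condition for subgroups of prime power order. For the converse, let $H$ be arbitrary. The group $H^{1}(H,E)$ is annihilated by $\#H$, so it is the direct sum of its $p$-primary parts for the primes $p$ dividing $\#H$. Choose a $p$-Sylow subgroup $H_{p}$ of $H$. The composite $\mathrm{cor}\circ\mathrm{res}\colon H^{1}(H,E)\rightarrow H^{1}(H_{p},E)\rightarrow H^{1}(H,E)$ is multiplication by $(H:H_{p})$, which is prime to $p$. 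So restriction is injective on the $p$-primary part. By hypothesis $H^{1}(H_{p},E)=0$, so every $p$-primary part vanishes and $H^{1}(H,E)=0$. Combining this with Step 1 gives the stated equivalence.

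No step is a real obstacle. The only point needing care is the use of $H^{1}(H,R)=0$, which is exactly where the permutation hypothesis on $R$ enters.
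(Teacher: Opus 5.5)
Your proof is correct and follows the same route as the paper. The paper also reduces coflabbiness to subgroups of prime power order by the injectivity of restriction to Sylow subgroups, which you justify in more detail through $\mathrm{cor}\circ\mathrm{res}$ on $p$-primary parts. It likewise obtains (a)$\Leftrightarrow$(b) from $H^{1}(H,R)=0$, and you are right that this requires reading $R$ as permutation, a hypothesis the paper uses only implicitly.
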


\begin{proof}
Take a subgroup $H$ of $G$. For each prime divisor $p$ of $\#H$, pick a $p$-Sylow subgroup $H_{p}$ of $H$. Then, the homomorphisms
\begin{equation*}
    (\Res_{H/H_{p}})_{p}\colon H^{1}(H,E)\rightarrow \bigoplus_{p\mid \#G}H^{1}(H_{p},E)
\end{equation*}
is injective. Hence, we have $H^{1}(H,E)=0$ if and only if $H^{1}(H_{p},E)=0$ for any prime divisor $p$ of $\#G$. Therefore, \eqref{eq:abcf} is a coflabby resolution if and only if (a) is valid for all subgroups of prime power order $H$ of $G$. Finally, the equivalence between (a) and (b) follows from the vanishing of $H^{1}(H,R)$. 
\end{proof}

\begin{prop}\label{prop:cfcf}
The sequence \eqref{eq:cdcf} is a coflabby resolution of $I_{m}$. 
\end{prop}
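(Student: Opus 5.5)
We apply Lemma \ref{lem:flpp} to \eqref{eq:cdcf}: it suffices to check, for every subgroup $H$ of $D_{2m}$ of prime power order, that either $H^{1}(H,E_{m})=0$ or $\Psi_{m}\colon R_{m}^{H}\rightarrow I_{m}^{H}$ is surjective. Since $m$ is odd, $D_{2m}$ has order $4m$. A $2$-Sylow subgroup is $\langle \sigma_{2m}^{m},\tau_{2m}\rangle\cong (C_{2})^{2}$, and every subgroup of odd prime power order lies in $\langle \sigma_{2m}^{2}\rangle$. So, up to conjugacy, the subgroups to treat are the $p$-subgroups of $\langle\sigma_{2m}^{2}\rangle$ (for odd $p$), the order-$2$ subgroups $\langle \sigma_{2m}^{m}\rangle$, $\langle \tau_{2m}\rangle$, $\langle \sigma_{2m}\tau_{2m}\rangle$, $\langle \sigma_{2m}^{m}\tau_{2m}\rangle$, and the Klein four-groups $\langle \sigma_{2m}^{m},\tau_{2m}\rangle$ and $\langle\sigma_{2m}^{m},\sigma_{2m}\tau_{2m}\rangle$.

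\textbf{Subgroups contained in $D^{(m)}$.} By Lemma \ref{lem:rsdg} (ii), $E_{m}$ is a permutation $D^{(m)}$-lattice. Hence $H^{1}(H,E_{m})=0$ for every $H\le D^{(m)}$. This covers all odd-order subgroups, $\langle\tau_{2m}\rangle$, and the conjugates of $\langle\tau_{2m}\rangle$ and $\langle \sigma_{2m}\tau_{2m}\rangle$ lying in $D^{(m)}$. Since $m$ is odd, each reflection is conjugate either to one in $D^{(m)}$ or to $\sigma_{2m}^{m}\tau_{2m}$. Conjugation preserves cohomology, so only subgroups containing $\sigma_{2m}^{m}$ or $\sigma_{2m}^{m}\tau_{2m}$ remain.

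\textbf{Subgroups containing $\sigma_{2m}^{m}$.} We verify surjectivity of $R_{m}^{H}\rightarrow I_{m}^{H}$ directly. Lemma \ref{lem:zfsj} already gives $\Psi_{m}(R_{m}^{\langle \sigma_{2m}^{m}\rangle})\supset I_{m,0}$. Modulo $I_{m,0}$, the lattice $I_{m}^{\langle\sigma_{2m}^{m}\rangle}$ is controlled by its second coordinate, which lies in $\Z[D_{2m}/\langle\tau_{2m}\rangle]^{\langle\sigma_{2m}^{m}\rangle}$. That module is spanned by the orbit sums $(1+\sigma_{2m}^{m})g$. These are hit, modulo $I_{m,0}$, by the translates $g\,\psi_{m,2}(1)$, since $R_{m,2}$ is fixed by $\sigma_{2m}^{m}$; the augmentation constraint is absorbed by $I_{m,0}$. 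For the Klein four-group $V=\langle \sigma_{2m}^{m},\tau_{2m}\rangle$, we take $V$-invariants of the same generators: the norms $N_{V/\langle\sigma_{2m}^m\rangle}$ of preimages, together with $\psi_{m,3}(1)$, which is $D_{2m}$-invariant. If the bookkeeping there is messy, an alternative is to show $H^{1}(V,E_{m})=0$ via inflation–restriction from $\langle\tau_{2m}\rangle\le D^{(m)}$, using $E_{m}^{\langle \tau_{2m}\rangle}$ and the first part.

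\textbf{Subgroups containing $\sigma_{2m}^{m}\tau_{2m}$.} Here we use Lemma \ref{lem:rsdg}. First restrict $E_{m}=E_{m}^{\langle\sigma_{2m}^{2}\rangle}+\langle x_{1}\rangle$ to $H=\langle\sigma_{2m}^{m}\tau_{2m}\rangle$. The summand $\langle x_{1}\rangle\cong\Z[D_{2m}/H]$ is permutation. The lattice $E_{m}^{\langle\sigma_{2m}^{2}\rangle}$ is a $D_{2m}/\langle\sigma_{2m}^{2}\rangle\cong(C_{2})^{2}$-lattice, identified via Claim (b) and the rank count. We then aim to show $E_{m}$ restricted to $H$ (and to the Klein group containing it) is a sum of permutation pieces. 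The rank check is that $4m$ from $\langle x_{1}\rangle$ plus $2$ from Claim (b) equals $\rk E_{m}$. This would give $H^{1}(H,E_{m})=0$.

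The main obstacle is the Klein four-groups, especially the verification that the explicit lifts land in $V$-invariants. This is where I expect case-splitting on $m\bmod 4$, as in Lemmas \ref{lem:zfsj} and \ref{lem:rsdg}.
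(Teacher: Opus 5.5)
Your reduction via Lemma \ref{lem:flpp}, your treatment of subgroups of $D^{(m)}$, and your surjectivity argument for $\langle\sigma^{m}\rangle$ match the paper's Cases 1 and 2 (abbreviating $\sigma=\sigma_{2m}$, $\tau=\tau_{2m}$). The genuine gap is the case $H=\langle\sigma^{m}\tau\rangle$.

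\textbf{The case $H=\langle\sigma^{m}\tau\rangle$.} The sum $E_{m}=E_{m}^{\langle\sigma^{2}\rangle}+\langle x_{1}\rangle_{\Z[D_{2m}]}$ from Lemma \ref{lem:rsdg} is not direct. We have $\rk E_{m}=(6m+1)-(4m-1)=2m+2$. But $\langle x_{1}\rangle\cong\Z[D_{2m}/\langle\sigma^{m}\tau\rangle]$ has rank $2m$ (not $4m$), and $E_{m}^{\langle\sigma^{2}\rangle}$ has rank $4$ by Lemma \ref{lem:stfx}. So the two pieces meet in a rank-$2$ lattice $C$. Both pieces are permutation over $H$, but the sequence $0\to C\to E_{m}^{\langle\sigma^{2}\rangle}\oplus\langle x_{1}\rangle\to E_{m}\to 0$ only gives $H^{1}(H,E_{m})\hookrightarrow H^{2}(H,C)$, which you have not controlled. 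The other splitting you invoke, $E_{m}\cong E'_{m}\oplus\Z[D^{(m)}]$ with Claim (b), is only an isomorphism of $D^{(m)}$-lattices, and $\sigma^{m}\tau\notin D^{(m)}$. So nothing you cite yields $H^{1}(H,E_{m})=0$.

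The fix is to use condition (b), as the paper does. Since $m$ is odd, $h:=\sigma^{m}\tau$ fixes no coset of the normal subgroup $\langle\sigma^{m}\rangle$, because $h\notin\langle\sigma^{m}\rangle$. It also fixes no coset $\sigma^{j}\langle\tau\rangle$, since that would force $2j\equiv m \bmod 2m$. Hence every $H$-invariant of $I_{m}$ has the form $((1+h)a',(1+h)b')$ with $2(\varepsilon(a')+\varepsilon(b'))=0$. This means $I_{m}^{H}=(1+h)I_{m}$, and surjectivity of $\Psi_{m}$ (Corollary \ref{cor:eris}) gives $R_{m}^{H}\supseteq(1+h)R_{m}\twoheadrightarrow I_{m}^{H}$.

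\textbf{The Klein four-group.} This step is only sketched, but your generators are the right ones, and $\psi_{m,3}(1)$ is in fact indispensable. For $V=\langle\sigma^{m},\tau\rangle$, every element of $(1+\tau)I_{m}^{\langle\sigma^{m}\rangle}$ has even coefficient at the coset $\langle\tau\rangle$ in the second coordinate, since $\tau$ fixes that coset. By contrast, $\psi_{m,3}(1)\in I_{m}^{V}$ has coefficient $-1$ there. So the equality $I_{m}^{H}=(1+\tau)I_{m}^{\langle\sigma^{m}\rangle}$ asserted in the paper's Case 3 is off by index $2$. Its conclusion survives only because $\psi_{m,3}(1)=\Psi_{m}(1)$ with $1\in R_{m,3}^{V}$.

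What you should write out is $I_{m}^{V}=(1+\tau)I_{m}^{\langle\sigma^{m}\rangle}+\Z\psi_{m,3}(1)$, which goes as follows.
\begin{enumerate}
\item With $b_{j}:=\sigma^{j}\langle\tau\rangle$, the $V$-invariant second coordinates are spanned by $b_{0}+b_{m}$ and the $(m-1)/2$ orbit sums $b_{j}+b_{j+m}+b_{-j}+b_{m-j}$.
\item The elements $(1+\tau)\sigma^{j}\psi_{m,2}(1)$ produce, up to sign, these orbit sums and $2(b_{0}+b_{m})$.
\item The element $\psi_{m,3}(1)$ supplies $b_{0}+b_{m}$.
\item What remains is $(a,0)$ with $a$ a $\tau$-invariant of augmentation $0$ in $\Z[D_{2m}/\langle\sigma^{m}\rangle]$. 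Since $\tau$ acts freely there, $(a,0)\in(1+\tau)I_{m,0}$. Lemma \ref{lem:zfsj} lifts this to $(1+\tau)R_{m}^{\langle\sigma^{m}\rangle}\subseteq R_{m}^{V}$.
\end{enumerate}
The inflation--restriction alternative is not needed. It merely moves the problem to $H^{1}(V/\langle\tau\rangle,E_{m}^{\langle\tau\rangle})$.
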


\begin{proof}
It suffices to prove (a) or (b) in Lemma \ref{lem:flpp} holds for any subgroup $H$ of prime power order in $D_{2m}$. Hence, we may assume that $H$ is contained in one of the following: 
\begin{equation*}
\langle \sigma_{2m}^{m}\rangle,\quad \langle \tau_{2m}\rangle,\quad \langle \sigma_{2m}^{m}\tau_{2m}\rangle,\quad \langle \sigma_{2m}^{m},\tau_{2m}\rangle,\quad \langle \sigma_{2m}^{2}\rangle. 
\end{equation*}

\textbf{Case 1.~$H=\langle \tau_{2m}\rangle$ or $H\subset \langle \sigma_{2m}^{2}\rangle$. }

In this case, $H$ is contained in $D^{(m)}:=\langle \sigma_{2m}^{2},\tau_{2m}\rangle$. Hence, Lemma \ref{lem:rsdg} implies that $E_{m}$ is permutation as an $H$-lattice. In particular, we obtain $H^{1}(H,E_{m})=0$ as desired. 

\textbf{Case 2.~$H=\langle \sigma_{2m}^{m}\rangle$. }

By direct computation, we have $I_{m}^{H}=I_{m,0}+\Psi_{m}(R_{m,2})$. Hence, Lemma \ref{lem:zfsj} gives an equality
\begin{equation*}
    I_{m}^{H}=\Psi_{m}(R_{m,1}^{H}\oplus R_{m,2}). 
\end{equation*}
Now, we obtain that (b) holds, that is, $R_{m}^{H}\rightarrow I_{m}^{H}$ is surjective. 

\textbf{Case 3.~$H=\langle \sigma_{2m}^{m},\tau_{2m}\rangle$. }

The equality $I_{m}^{H}=(1+\tau_{2m})(I_{m,0}+\Psi_{m}(R_{m,2}))$ can be confirmed directly. In particular, one has 
\begin{equation*}
    I_{m}^{H}=(1+\tau_{2m})I_{m}^{\langle \sigma_{2m}^{m}\rangle}. 
\end{equation*}
Hence, the surjection $\Psi_{m}\colon R_{m}^{\langle \sigma_{2m}^{m}\rangle}\rightarrow I^{\langle \sigma_{2m}^{m}\rangle}$, which is  ascertained in Case 2, implies the validity of (b). 

\textbf{Case 4.~$H=\langle \sigma_{2m}^{m}\tau_{2m}\rangle$. }

By the definition of $\Psi_{m}$, we obtain $I_{m}^{H}=(1+\sigma_{2m}^{m}\tau_{2m})(I_{m,0}+\Psi_{m}(R_{m,1}))$. In particular, we have
\begin{equation*}
    I_{m}^{H}=(1+\sigma_{2m}^{m}\tau_{2m})I_{m}. 
\end{equation*}
On the other hand, the homomorphism $R_{m}\rightarrow I_{m}$ is surjective, which is contained in Corollary \ref{cor:eris}. Therefore, we obtain that (b) is valid in this case.  
\end{proof}

\begin{prop}\label{prop:fmiv}
The $D_{2m}$-lattice $E_{m}$ is invertible. 
\end{prop}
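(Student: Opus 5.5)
The plan is to use the fact that invertibility of a $G$-lattice can be checked on Sylow subgroups. Concretely, a $G$-lattice $M$ is invertible if and only if it is invertible as a $G_{p}$-lattice for every Sylow subgroup $G_{p}$ of $G$. This is the invertibility analogue of \cite[Corollary 1.6]{Endo2011}, the result already used in the proof of Theorem \ref{thm:qips}; it can be proved by a standard restriction--corestriction/transfer argument with the idempotents $(G:G_{p})^{-1}$ summed over $p$.

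Since $m$ is odd, $\#D_{2m}=4m$. For every odd prime $p$, a $p$-Sylow subgroup of $D_{2m}$ is contained in $\langle \sigma_{2m}^{2}\rangle\subset D^{(m)}$. By Lemma \ref{lem:rsdg} (ii), $E_{m}$ is a permutation $D^{(m)}$-lattice, so it is permutation, hence invertible, as a lattice over each odd Sylow subgroup. It therefore remains to treat a $2$-Sylow subgroup. Up to conjugacy this is $P:=\langle \sigma_{2m}^{m},\tau_{2m}\rangle\cong (C_{2})^{2}$.

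For $P$ I would use Proposition \ref{prop:cfcf}, which says that \eqref{eq:cdcf} is a coflabby resolution, so $E_{m}$ is a coflabby $P$-lattice. The tempting shortcut is to argue that $I_{m}$ restricted to $P$ is controlled and then invoke Proposition \ref{prop:emqi}. That does not apply directly, because $P$ is not cyclic. Instead I would show directly that $E_{m}$ is permutation as a $P$-lattice. The tools are Lemma \ref{lem:rsdg}: it supplies the decomposition $E_{m}=E_{m}^{\langle \sigma_{2m}^{2}\rangle}+\langle x_{1}\rangle_{\Z[D_{2m}]}$ and the identification $\langle x_{1}\rangle_{\Z[D_{2m}]}\cong \Z[D_{2m}/\langle\sigma_{2m}^{m}\tau_{2m}\rangle]$, which is a permutation lattice for every subgroup. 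So the plan is to analyze the quotient
\begin{equation*}
E_{m}/\langle x_{1}\rangle_{\Z[D_{2m}]}.
\end{equation*}
By the rank count in Lemma \ref{lem:rsdg} (ii), this quotient is a lattice of rank $2$ on which $\langle \sigma_{2m}^{2}\rangle$ acts trivially. Hence it is a $D_{2m}/\langle\sigma_{2m}^{2}\rangle\cong (C_{2})^{2}$-lattice, and I expect it to be identified, via the image of $E'_{m}$ (Claim (b)), with $\Z[D_{2m}/\langle \sigma_{2m}^{2}\rangle]$ or a twist of it. If the resulting extension
\begin{equation*}
0\rightarrow \Z[D_{2m}/\langle\sigma_{2m}^{m}\tau_{2m}\rangle]\rightarrow E_{m}\rightarrow Q\rightarrow 0
\end{equation*}
splits over $P$, with $Q$ permutation, then $E_{m}$ is permutation over $P$ and we are done. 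Splitting would follow from $\mathrm{Ext}^{1}_{P}(Q,\,\cdot\,)$ vanishing on the relevant summands. Alternatively, it follows from exhibiting explicit $P$-fixed lifts, for instance the element $x_{0}$ of Claim (b) suitably modified by multiples of $x_{1}$.

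As a fallback for the $2$-part, I would use the classification of coflabby $(C_{2})^{2}$-lattices of small rank. Restricted to $P$, the lattice $E_{m}$ has $\Z$-rank $\#D_{2m}+2m+1-(2m+1)=4m+2$. Here each $P$-orbit structure comes from the permutation pieces already found. I would check directly that the only coflabby but non-invertible indecomposable building blocks are excluded by computing $H^{1}(H,E_{m}^{\circ})$ for the three order-$2$ subgroups $H$ of $P$ using the dual of \eqref{eq:cdcf}. An invertible lattice must also be flabby, and by the results on $(C_{2})^{2}$-lattices (coflabby plus flabby implies invertible for this group), this yields invertibility.

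The main obstacle is this $2$-Sylow step, namely verifying the flabby side, i.e.\ $\widehat{H}^{-1}(H,E_{m})=0$ for $H=\langle\sigma_{2m}^{m}\rangle$ and $H=\langle\sigma_{2m}^{m},\tau_{2m}\rangle$. These subgroups are not contained in $D^{(m)}$, so Lemma \ref{lem:rsdg} gives no direct control there, and explicit elements like $x_{1}$ must be used.
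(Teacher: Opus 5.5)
\textbf{Verdict: there is a genuine gap.} The reduction and the odd primes are fine, but the $2$-Sylow step, which is the substance of the proposition, is not proved.

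Your frame is sound. Invertibility can be tested on Sylow subgroups: if integers $a_p$ satisfy $\sum_p a_p(G:G_p)=1$, then $M$ is a direct summand of $\bigoplus_p\mathrm{Ind}_{G_p}^{G}M$. Lemma \ref{lem:rsdg} (ii) then disposes of the odd primes. What remains is the $2$-Sylow subgroup $P=\langle\sigma_{2m}^{m},\tau_{2m}\rangle$, and you leave it open: both strategies are stated conditionally, and you flag this step yourself as the main obstacle.

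The first strategy also misses the real issue.
\begin{itemize}
\item The quotient $Q=E_m/\langle x_1\rangle_{\Z[D_{2m}]}$ has rank $2$, so it cannot be $\Z[D_{2m}/\langle\sigma_{2m}^{2}\rangle]$, which has rank $4$.
\item Claim (b) pins $Q$ down only over $D^{(m)}$. A priori $\sigma_{2m}$ could act on $Q$ by $-1$ or by minus the swap, and either would make $Q$ non-coflabby.
\item Conversely, once $Q$ is known to be permutation, splitting is automatic, because $\mathrm{Ext}^{1}_{G}(\Z[G/H],M)\cong H^{1}(H,M)=0$ for permutation $M$.
\end{itemize}
Your rank count is also off: $\rk_{\Z}I_m=4m-1$, so $\rk_{\Z}E_m=2m+2$, not $4m+2$. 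The fallback needs $\widehat{H}^{-1}(H,E_m)=0$ for the subgroups $H$ of $P$ not contained in $D^{(m)}$, and you do not compute this.

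The idea you are missing is that $E_m$ is, up to duality, the flabby class of $J_{D_{2m}/\cH_m}$. By Proposition \ref{prop:cfcf}, the dual of \eqref{eq:cdcf} is a flabby resolution $0\to J_{D_{2m}/\cH_m}\to R_m^{\circ}\to E_m^{\circ}\to 0$, and it stays one after restriction to any subgroup. The paper argues exactly this way.
\begin{itemize}
\item $E_m$ is invertible once $J_{D_{2m}/\cH_m}$ is quasi-invertible, and the latter is checked Sylow-wise by Theorem \ref{thm:qips}.
\item Over $P$, Proposition \ref{prop:rtmc} and Lemma \ref{lem:rded} give $J_{D_{2m}/\cH_m}\cong J_{P/\{\langle\sigma_{2m}^{m}\rangle,\langle\tau_{2m}\rangle\}}\oplus(\text{permutation})$.
\item This lattice is quasi-permutation by Proposition \ref{prop:tgqp}, after an automorphism of $P\cong D_{2}$. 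Hence $E_m$ is stably permutation over $P$.
\end{itemize}
Inserted into your frame, this closes the $2$-part immediately.

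Alternatively, your first strategy can be finished directly.
\begin{itemize}
\item Compare characters in $0\to E_m\to R_m\to I_m\to 0$ and $0\to\langle x_1\rangle_{\Z[D_{2m}]}\to E_m\to Q\to 0$. This shows that $Q\otimes\Q$ is the trivial character plus the character $\chi$ with $\chi(\sigma_{2m})=1$ and $\chi(\tau_{2m})=-1$.
\item Hence $\sigma_{2m}$ acts trivially on $Q$, and $Q\cong\Z[D_{2m}/\langle\sigma_{2m}\rangle]$.
\item The extension then splits over all of $D_{2m}$, because $H^{1}(\langle\sigma_{2m}\rangle,\Z[D_{2m}/\langle\sigma_{2m}^{m}\tau_{2m}\rangle])=0$. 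This would even show that $E_m$ is permutation.
\end{itemize}
Neither argument appears in your proposal.
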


\begin{proof}
By Proposition \ref{prop:cfcf}, it suffices to prove that the $D_{2m}$-lattice $J_{D_{2m}/\cH_{m}}$ is quasi-invertible. By Theorem \ref{thm:qips}, it is equivalent to that $J_{D_{2m}/\cH}$ is quasi-invertible as an $S_{p}$-lattice for any $p\mid 2m$. Here, $S_{p}$ is a $p$-Sylow subgroup of $D_{2m}$. If $p$ is an odd prime, then $S_{p}$ is cyclic, and hence the assertion follows from Proposition \ref{prop:emqi}. Now, put $S:=\langle \sigma_{2m},\tau_{2m}\rangle$, which is a $2$-Sylow subgroup of $D_{2m}$. Then direct computation implies that the subsets $\{1,\sigma_{2m},\ldots,\sigma_{2m}^{m-1}\}$ and $\{1,\sigma_{2m},\ldots,\sigma_{2m}^{(m-1)/2}\}$ are complete representatives of $S\backslash D_{2m}/\langle \sigma_{2m}^{m}\rangle$ and $S\backslash D_{2m}/\langle \tau_{2m}\rangle$ respectively. Hence, there is an isomorphism of $S$-lattices $J_{D_{2m}/\cH_{m}}\cong J_{S/\cH_{0}}$, where
\begin{equation*}
\cH_{0}=\{\underbrace{\langle \sigma_{2m}^{m}\rangle,\ldots,\langle \sigma_{2m}^{m}\rangle}_{m},\langle \tau_{2m}\rangle,\underbrace{\{1\},\ldots,\{1\}}_{(m-1)/2}\}. 
\end{equation*}
In particular, one has an isomorphism of $S$-lattices
\begin{equation*}
J_{S/\cH_{0}}\cong J_{S/\{\langle \sigma_{2m}^{m}\rangle,\langle \tau_{2m}\rangle\}}\oplus \Z[S/\langle \sigma_{2m}^{m}\rangle]^{\oplus m-1}\oplus \Z[S]^{\oplus (m-1)/2}. 
\end{equation*}
On the other hand, the $S$-lattice $J_{S/\{\langle \sigma_{2m}^{m}\rangle,\langle \tau_{2m}\rangle\}}$ is quasi-invertible by Theorem \ref{prop:tgqp}. Therefore, we obtain that $J_{D_{2m}/\cH_{m}}$ is quasi-invertible as an $S$-lattice. This completes the proof by Theorem \ref{thm:qips}. 
\end{proof}

We now proceed to Step 3. 

\begin{lem}\label{lem:stfx}
There is an isomorphism of $D_{2m}$-lattices
\begin{equation*}
E_{m}^{\langle \sigma_{2m}^{2}\rangle}\cong \Z[D_{2m}/\langle \sigma_{2m}\rangle]\oplus \Z[D_{2m}/\langle \sigma_{2m}^{2},\sigma_{2m}\tau_{2m}\rangle]. 
\end{equation*}
\end{lem}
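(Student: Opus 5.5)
The plan is to read off a $\Z$-basis of $E_{m}^{\langle \sigma_{2m}^{2}\rangle}$ from Lemma \ref{lem:rsdg} (ii), and then to identify the action of $D_{2m}/\langle \sigma_{2m}^{2}\rangle\cong (C_{2})^{2}$ on that basis. This quotient is generated by the images of $\sigma_{2m}$ and $\tau_{2m}$, since $m$ is odd.

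\textbf{Rank and basis.} By Lemma \ref{lem:rsdg} (ii) we have $E_{m}\cong E'_{m}\oplus \langle x_{1}\rangle_{\Z[D^{(m)}]}$ as $D^{(m)}$-lattices, with $E'_{m}\cong \Z[D^{(m)}/\langle \sigma_{2m}^{2}\rangle]$ and $\langle x_{1}\rangle_{\Z[D^{(m)}]}\cong \Z[D^{(m)}]$. Taking $\langle \sigma_{2m}^{2}\rangle$-invariants of a permutation lattice is additive and gives the span of the orbit sums. Hence $E_{m}^{\langle \sigma_{2m}^{2}\rangle}$ has rank $4$, and it has the $\Z$-basis
\begin{equation*}
x_{0},\ \tau_{2m}x_{0},\ y:=N x_{1},\ \tau_{2m}y,\qquad N:=\sum_{i=0}^{m-1}\sigma_{2m}^{2i}.
\end{equation*}
Here $x_{0}$ is the element $(\tau_{2m}-1,1,-1)$ of Claim (b), transported into $R_{m}$ via $r_{m,1},r_{m,2},r_{m,3}$. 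This is consistent with the rank count $\rk_{\Z}E_{m}=(6m+1)-(4m-1)=2m+2$.

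\textbf{Action of $D_{2m}$.} In the proof of Lemma \ref{lem:rsdg} it is shown that $E'_{m}=(E'_{m})^{\langle \sigma_{2m}\rangle}$, so $x_{0}$ is fixed by $\sigma_{2m}$. I would re-verify this directly in $R_{m}$-coordinates, because $\sigma_{2m}^{m}\notin D^{(m)}$. Since $\tau_{2m}x_{0}\neq x_{0}$, the elements $x_{0},\tau_{2m}x_{0}$ span a copy of $\Z[D_{2m}/\langle \sigma_{2m}\rangle]$.

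For $y$, Lemma \ref{lem:rsdg} (i) gives that $x_{1}$ is fixed by $\sigma_{2m}^{m}\tau_{2m}$. Conjugation by $\sigma_{2m}^{m}\tau_{2m}$ inverts $\sigma_{2m}^{2}$, so it permutes the terms of $N$, and therefore $\sigma_{2m}^{m}\tau_{2m}y=y$. Thus $y$ is fixed by $\langle \sigma_{2m}^{2},\sigma_{2m}^{m}\tau_{2m}\rangle$. This group equals $\langle \sigma_{2m}^{2},\sigma_{2m}\tau_{2m}\rangle$, because $m-1$ is even. Moreover $\tau_{2m}y=\sigma_{2m}^{-m}y=\sigma_{2m}y$ (modulo the action of $\sigma_{2m}^{2}$), and this is different from $y$. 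So $y,\tau_{2m}y$ span a copy of $\Z[D_{2m}/\langle \sigma_{2m}^{2},\sigma_{2m}\tau_{2m}\rangle]$.

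\textbf{Conclusion.} The two $D_{2m}$-stable sublattices just described together use the whole $\Z$-basis above. Hence their direct sum is all of $E_{m}^{\langle \sigma_{2m}^{2}\rangle}$, which gives the claimed isomorphism.

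The main obstacle is the first point of the action step: confirming in the original coordinates of $R_{m}=R_{m,1}\oplus R_{m,2}\oplus R_{m,3}$ that the transported $x_{0}$ is invariant under $\sigma_{2m}$ itself, and not merely under $D^{(m)}$. I would do this first, by writing $x_{0}$ explicitly. Using $r_{m,1}(a_{1},0)=a_{1}$, its first component becomes a combination of $\tau_{2m}-1$ and $\sigma_{2m}^{m}$-translates. If it is not $\sigma_{2m}$-fixed, I would instead take $x_{0}+\sigma_{2m}^{m}x_{0}$, adjusted to remain primitive, as the generator. I would then recheck that this element lies in $E_{m}$ and has stabilizer exactly $\langle \sigma_{2m}\rangle$ modulo $\langle \sigma_{2m}^{2}\rangle$.
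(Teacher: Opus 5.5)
Your proposal has a genuine gap in the action step. The generator $x_0$ of $E'_m$ is not fixed by $\sigma_{2m}$. Consequently the $D^{(m)}$-splitting $E_m=E'_m\oplus\langle x_1\rangle_{\Z[D^{(m)}]}$ of Lemma \ref{lem:rsdg} (ii) does not become a $D_{2m}$-splitting after you take $\langle\sigma_{2m}^2\rangle$-invariants.

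\textbf{Why $x_0$ is not $\sigma_{2m}$-fixed.} The equality ``$E'_m=(E'_m)^{\langle\sigma_{2m}\rangle}$'' in that proof has to be read with $\langle\sigma_{2m}^2\rangle$. In fact $E'_m$ is not even $\sigma_{2m}$-stable, because its first coordinate lies in $\Z[D^{(m)}]\subset\Z[D_{2m}]$.
\begin{itemize}
\item The literal element $(\tau_{2m}-1,1,-1)$ lies in $E_m$ only when $m=1$. For $m\ge 3$, the first coordinate of its image under $\Psi_m$ is $[\tau_{2m}A]+[\sigma_{2m}^2A]$ minus the sum of all $2m$ cosets of $A:=\langle\sigma_{2m}^m\rangle$, which is nonzero.
\item The correct generator is $x_0=(\nu(\tau_{2m}-1),\nu,-1)$, where $\nu=\sum_{i=0}^{m-1}\sigma_{2m}^{2i}$ is your $N$. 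With this $x_0$, your $\Z$-basis $x_0,\tau_{2m}x_0,y,\tau_{2m}y$ of $E_m^{\langle\sigma_{2m}^2\rangle}$ is correct, and so is your treatment of $y$.
\item However, $\sigma_{2m}x_0-x_0=(\nu(1-\sigma_{2m})(1-\tau_{2m}),0,0)\neq 0$.
\item Already for $m=1$, where $x_0=(\tau_2-1,1,-1)$ and $y=x_1=(1+\sigma_2\tau_2,0,-1)$, one computes $\sigma_2x_0=x_0+y-\tau_2y$.
\end{itemize}

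\textbf{Why the fallback does not help.} Take $m=1$ and $z:=x_0+\sigma_2x_0=2x_0+y-\tau_2y$. This element is primitive and its stabilizer is $\langle\sigma_2\rangle$. But $\langle z,\tau_2z,y,\tau_2y\rangle_{\Z}=\langle 2x_0,2\tau_2x_0,y,\tau_2y\rangle_{\Z}$, which has index $4$ in $E_1$. So your concluding claim that the two orbits ``use the whole basis'' is false.

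\textbf{What is needed.} You need a $\sigma_{2m}$-fixed element that still spans together with $y$ and $\tau_{2m}y$. One choice is $e_m:=(-\sum_{j=0}^{2m-1}\sigma_{2m}^{j},\nu,0)$, which lies in $E_m^{\langle\sigma_{2m}^2\rangle}$; for $m=1$ it equals $x_0-\tau_2y$.
\begin{itemize}
\item Under projection to the second factor, $e_m$ maps to $\nu$, which generates $R_{m,2}^{\langle\sigma_{2m}^2\rangle}\cong\Z[D_{2m}/\langle\sigma_{2m}\rangle]$.
\item Hence $e_m$ gives an equivariant splitting of the projection $E_m^{\langle\sigma_{2m}^2\rangle}\to R_{m,2}^{\langle\sigma_{2m}^2\rangle}$.
\item The kernel of this projection is spanned by $(\nu(1+\sigma_{2m}\tau_{2m}),0,-1)$ and its $\sigma_{2m}$-translate. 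It is isomorphic to $\Z[D_{2m}/\langle\sigma_{2m}^2,\sigma_{2m}\tau_{2m}\rangle]$.
\end{itemize}

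This is in substance the paper's argument. Taking $\langle\sigma_{2m}^2\rangle$-invariants and using orbit sums (Lemma \ref{lem:qtfx}) identifies $R_m\to I_m$ with the $m=1$ data. For $m=1$, the kernel of $\Z[D_2]\oplus\Z\to I_1$ is a copy of $\Z[D_2/\langle\sigma\tau\rangle]$, and $R_{1,2}$ splits off a copy of $\Z[D_2/\langle\sigma\rangle]$.
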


\begin{proof}
We may assume $m=1$, which is a consequence of Lemma \ref{lem:qtfx}. By definition, the homomorphism
\begin{equation*}
    \Z[D_{2}]\oplus \Z \rightarrow I_{1};\,(x,y)\mapsto \psi_{1,1}(x)+\psi_{1,3}(y)
\end{equation*}
is surjective, and its kernel coincides with $\langle (1+\sigma_{1}\tau_{1},-1),(\sigma_{1}(1+\sigma_{1}\tau_{1}),-1)\rangle_{\Z}\cong \Z[D_{2}/\langle \sigma_{1}\tau_{1} \rangle]$. This follows from the same argument as \cite[p.~30, \S 2]{Endo2001}. Hence, it induces an exact sequence of $D_{2}$-lattices
\begin{equation*}
    0\rightarrow \Z[D_{2}/\langle \sigma_{1}\tau_{1}\rangle]\rightarrow \Z[D_{2}]\oplus \Z \rightarrow I_{1}\rightarrow 0. 
\end{equation*}
Then, we obtain the desired assertion without difficulty. 
\end{proof}

We recall the basic fact given by Lenstra (\cite{Lenstra1974}). Note that Step 2 is necessary in order to apply the following.

\begin{lem}[{\cite[(1.2) Proposition (c)]{Lenstra1974}}]\label{lem:lstr}
Let $G$ be a finite group, and $E$ an invertible $G$-lattice. For a coflabby $G$-lattice $F$, every exact sequence of $G$-lattices
\begin{equation*}
0\rightarrow F \rightarrow Q \rightarrow E \rightarrow 0
\end{equation*}
splits. 
\end{lem}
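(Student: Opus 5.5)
The plan is to show that $\mathrm{Ext}^{1}_{\Z[G]}(E,F)=0$. Extensions of $G$-lattices $0\rightarrow F\rightarrow Q\rightarrow E\rightarrow 0$ are in particular extensions of $\Z[G]$-modules, and they are classified up to equivalence by $\mathrm{Ext}^{1}_{\Z[G]}(E,F)$. The split extension corresponds to the zero class, so vanishing of this group forces every such sequence to split.

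\textbf{Step 1: reduce to permutation lattices.} Since $E$ is invertible, there are a $G$-lattice $E'$ and a permutation $G$-lattice $R\cong\bigoplus_{i=1}^{m}\Z[G/H_{i}]$ with $E\oplus E'\cong R$. Because $\mathrm{Ext}^{1}_{\Z[G]}(-,F)$ is additive, $\mathrm{Ext}^{1}_{\Z[G]}(E,F)$ is a direct summand of
\begin{equation*}
\mathrm{Ext}^{1}_{\Z[G]}(R,F)\cong \bigoplus_{i=1}^{m}\mathrm{Ext}^{1}_{\Z[G]}(\Z[G/H_{i}],F).
\end{equation*}
It therefore suffices to show that each summand on the right vanishes.

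\textbf{Step 2: Shapiro's lemma.} For a subgroup $H$ of $G$ we have $\Z[G/H]\cong \Z[G]\otimes_{\Z[H]}\Z$, and induction is left adjoint to restriction. This adjunction passes to Ext because $\Z[G]$ is free over $\Z[H]$, so induction is exact and sends projectives to projectives. Hence
\begin{equation*}
\mathrm{Ext}^{1}_{\Z[G]}(\Z[G/H],F)\cong \mathrm{Ext}^{1}_{\Z[H]}(\Z,F)=H^{1}(H,F).
\end{equation*}
Since $F$ is coflabby, $H^{1}(H,F)=0$ for every subgroup $H$ of $G$. Combining this with Step 1 gives $\mathrm{Ext}^{1}_{\Z[G]}(E,F)=0$, and the splitting follows.

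\textbf{Main obstacle.} No step here is a genuine obstacle. The only point requiring care is the identification in Step 2: one must check that the Ext groups are taken over $\Z[G]$, and that this agrees with classifying lattice extensions. It does, since any $\Z[G]$-extension of lattices is again a lattice, and a splitting as $\Z[G]$-modules is exactly a splitting of $G$-lattices.
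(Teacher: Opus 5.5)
Your proof is correct and complete. The paper gives no proof of its own and only cites Lenstra's Proposition (1.2)(c). Your argument is the standard one behind that result: reduce by additivity to $E=\Z[G/H]$, then use $\mathrm{Ext}^{1}_{\Z[G]}(\Z[G/H],F)\cong H^{1}(H,F)=0$. Equivalently, the identity of $E$ lifts because $Q^{H}\rightarrow E^{H}$ is surjective when $H^{1}(H,F)=0$.
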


\begin{prop}\label{prop:emsp}
There is an isomorphism of $D_{2m}$-lattices
\begin{equation*}
E_{m}\oplus \Z[D_{2m}/\langle \sigma_{2m}^{2},\sigma_{2m}^{m}\tau_{2m}\rangle] \cong \Z[D_{2m}/\langle \sigma_{2m}^{m}\tau_{2m}\rangle]\oplus \Z[D_{2m}/\langle \sigma_{2m}\rangle]\oplus \Z[D_{2m}/\langle \sigma_{2m}^{2},\sigma_{2m}^{m}\tau_{2m}\rangle]. 
\end{equation*}
\end{prop}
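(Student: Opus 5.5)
We will build the isomorphism out of two short exact sequences and split both using Lemma \ref{lem:lstr}. Write $A:=\langle x_{1}\rangle_{\Z[D_{2m}]}\cong \Z[D_{2m}/\langle \sigma_{2m}^{m}\tau_{2m}\rangle]$, which is Lemma \ref{lem:rsdg} (i). Write $B:=E_{m}^{\langle \sigma_{2m}^{2}\rangle}\cong \Z[D_{2m}/\langle \sigma_{2m}\rangle]\oplus \Z[D_{2m}/\langle \sigma_{2m}^{2},\sigma_{2m}\tau_{2m}\rangle]$, which is Lemma \ref{lem:stfx}. Lemma \ref{lem:rsdg} (ii) gives $E_{m}=B+A$, so we get an exact sequence of $D_{2m}$-lattices
\begin{equation*}
0\rightarrow A\cap B\rightarrow A\oplus B\rightarrow E_{m}\rightarrow 0,\quad a\oplus b\mapsto a+b.
\end{equation*}

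The first step is to identify $A\cap B=A^{\langle\sigma_{2m}^{2}\rangle}$. Inside $A\cong\Z[D_{2m}/\langle\sigma_{2m}^{m}\tau_{2m}\rangle]$, the $\langle\sigma_{2m}^{2}\rangle$-invariants form the image of the norm of $\langle\sigma_{2m}^{2}\rangle$. Since $\langle\sigma_{2m}^{2}\rangle$ acts freely on the cosets, this lattice is isomorphic to $\Z[D_{2m}/\langle\sigma_{2m}^{2},\sigma_{2m}^{m}\tau_{2m}\rangle]$, a permutation lattice of rank $2$. I will double-check ranks here. $E_{m}$ has rank $\rk R_{m}-\rk I_{m}=4m+2m+1-(2m+m-1)=3m+2$. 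We also have $\rk A=2m$, $\rk B=4$ (two cosets plus two cosets), and $\rk(A\cap B)=2$. Then $2m+4-2=2m+2$, which does not equal $3m+2$ unless $m=1$.

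So the computation of $\rk B$ has to be revisited. Lemma \ref{lem:stfx} as stated gives rank $2+2=4$, but $E_{m}^{\langle\sigma_{2m}^2\rangle}$ should have rank equal to the number of $\langle\sigma_{2m}^{2}\rangle$-orbits on a basis. In the $D^{(m)}$-decomposition of Lemma \ref{lem:rsdg} (ii), these invariants have rank $2+2=4$, so Lemma \ref{lem:stfx} is consistent. The problem is then in $A$: $\rk(A\cap B)$ must be $2m+4-(3m+2)=2-m$, which is negative for $m>1$. I conclude that $A+B$ is not all of $E_{m}$ with $A\cap B$ that small. The correct reading of Lemma \ref{lem:rsdg} (ii) is as a $D^{(m)}$-module statement: $E_{m}$ is generated by $B$ and the $D^{(m)}$-orbit of $x_{1}$, and the rank of that piece is $2m$. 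So I must recompute $\rk E_{m}$. Here $I_{m}$ has rank $2m+m-1=3m-1$, and $R_{m}$ has rank $4m+2m+1=6m+1$, so $\rk E_{m}=3m+2$. The $D^{(m)}$-decomposition gives $2+2m$, which contradicts this. I therefore suspect the reading $R_{m,2}=\Z[D_{2m}/\langle\sigma_{2m}^{m}\rangle]$, which has rank $2m$. I will not resolve the bookkeeping here; the intended structure is $E_{m}=B+A$ with the overlap a rank-$2$ permutation lattice, and the plan below proceeds on that basis.

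Granting this, the sequence above reads
\begin{equation*}
0\rightarrow \Z[D_{2m}/\langle\sigma_{2m}^{2},\sigma_{2m}^{m}\tau_{2m}\rangle]\rightarrow A\oplus B\rightarrow E_{m}\rightarrow 0.
\end{equation*}
The left term is permutation, hence coflabby, and $E_{m}$ is invertible by Proposition \ref{prop:fmiv}. Lemma \ref{lem:lstr} then splits the sequence, giving $E_{m}\oplus\Z[D_{2m}/\langle\sigma_{2m}^{2},\sigma_{2m}^{m}\tau_{2m}\rangle]\cong A\oplus B$, which is exactly the claimed right-hand side once $B$ is expressed in the stated form.

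The main obstacle is the precise identification of $A\cap B$. We have to show that $A\cap E_{m}^{\langle\sigma_{2m}^{2}\rangle}$ equals the norm image $N_{\langle\sigma_{2m}^2\rangle}x_{1}\cdot\Z[D_{2m}]$ and nothing more. I expect to do this by projecting to $R_{m,1}=\Z[D_{2m}]$, where $x_{1}$ has first component $1+\sigma_{2m}^{m}\tau_{2m}$ and the projection restricted to $A$ is injective. This reduces the question to computing invariants in $\Z[D_{2m}]$. A secondary issue is reconciling the third summand in the statement, $\langle\sigma_{2m}^{2},\sigma_{2m}^{m}\tau_{2m}\rangle$, with the one in Lemma \ref{lem:stfx}, $\langle\sigma_{2m}^{2},\sigma_{2m}\tau_{2m}\rangle$. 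These two subgroups are conjugate in $D_{2m}$, because $m$ is odd and $\sigma_{2m}^{m-1}\in\langle\sigma_{2m}^{2}\rangle$, so the coset lattices are isomorphic.
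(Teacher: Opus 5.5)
Your argument is correct and is essentially the paper's proof: the paper's surjection $\Xi_{m}\colon Q_{m}\rightarrow E_{m}$ is in effect your map $A\oplus B\rightarrow E_{m}$. Where the paper shows the rank-$2$ kernel is $\langle\sigma_{2m}^{2}\rangle$-fixed via ranks of invariants and Lemma \ref{lem:lmis}, you get the kernel for free as $A\cap B=A^{\langle\sigma_{2m}^{2}\rangle}\cong\Z[D_{2m}/\langle\sigma_{2m}^{2},\sigma_{2m}^{m}\tau_{2m}\rangle]$. This is immediate because $A\subset E_{m}$ and $B=E_{m}^{\langle\sigma_{2m}^{2}\rangle}$, so your announced ``main obstacle'' is not an obstacle, and both arguments then split the sequence via Proposition \ref{prop:fmiv} and Lemma \ref{lem:lstr}.

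The rank contradiction that made you hedge comes from a miscount. $\Z[D_{2m}/\langle\tau_{2m}\rangle]$ has rank $2m$, so $\rk I_{m}=4m-1$ and $\rk E_{m}=(6m+1)-(4m-1)=2m+2=2m+4-2$, in agreement with Lemma \ref{lem:rsdg}; nothing has to be granted. Also, $\langle\sigma_{2m}^{2},\sigma_{2m}\tau_{2m}\rangle$ and $\langle\sigma_{2m}^{2},\sigma_{2m}^{m}\tau_{2m}\rangle$ are equal, not merely conjugate, since $\sigma_{2m}^{m-1}\in\langle\sigma_{2m}^{2}\rangle$.
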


\begin{proof}
Put $Q_{m}:=\Z[D_{2m}/\langle \sigma_{2m}^{m}\tau_{m}\rangle]\oplus \Z[D_{2m}/\langle \sigma_{2m}\rangle] \oplus \Z[D_{2m}/\langle \sigma_{2m}^{2},\sigma_{2m}^{m}\tau_{2m}\rangle]$, and consider the homomorphism of $D_{2m}$-lattices as follows: 
\begin{equation*}
\Xi_{m} \colon Q_{m} \rightarrow E_{m}. 
\end{equation*}
By definition, one has $x_{0}=\sigma_{2m}(y_{0})-y_{1}$. Hence, Lemma \ref{lem:rsdg} implies that $\Xi_{m}$ is surjective. In particular, we obtain an exact sequence of $D_{2m}$-lattices
\begin{equation}\label{eq:ffde}
0\rightarrow F_{m} \rightarrow Q_{m} \xrightarrow{\Xi_{m}}E_{m} \rightarrow 0, 
\end{equation}
where $\rk_{\Z}(F_{m})=2$. On the other hand, we have $\rk_{\Z}(E_{m}^{\langle \sigma_{2m}^{2}\rangle})=6$ and $\rk_{\Z}(Q_{m}^{\langle \sigma_{2m}^{2}\rangle})=8$. Then we have 
\begin{equation}
F_{m}=F_{m}^{\langle \sigma_{2m}^{2}\rangle}\cong \Z[D_{2m}/\langle \sigma_{2m}^{2},\sigma_{2m}^{m}\tau_{2m}\rangle]
\end{equation}
by Lemma \ref{lem:lmis}. In particular, $F_{m}$ is coflabby.On the other hand, Proposition \ref{prop:fmiv} implies that $E_{m}$ is invertible. Therefore, \eqref{eq:ffde} splits by Lemma \ref{lem:lstr}, which is equivalent to the desired assertion. 
\end{proof}

Finally, we complete Step 4. Define $D_{2m}$-lattices $\widetilde{R}_{m}$ and $\widetilde{E}_{m}$ as follows: 
\begin{equation*}
    \widetilde{R}_{m}:=R_{m}\oplus \Z[D_{2m}/\langle \sigma_{2m}^{2},\sigma_{2m}^{m}\tau_{2m}\rangle],\quad
    \widetilde{E}_{m}:=E_{m}\oplus \Z[D_{2m}/\langle \sigma_{2m}^{2},\sigma_{2m}^{m}\tau_{2m}\rangle]. 
\end{equation*}
By Proposition \ref{prop:emsp}, there is an isomorphism of $D_{2m}$-lattices
\begin{equation*}
\widetilde{E}_{m}\cong \Z[D_{2m}/\langle \sigma_{2m}^{m}\tau_{2m}\rangle]\oplus \Z[D_{2m}/\langle \sigma_{2m}\rangle]\oplus \Z[D_{2m}/\langle \sigma_{2m}^{2},\sigma_{2m}^{m}\tau_{2m}\rangle]. 
\end{equation*}
In particular, $\widetilde{E}_{m}$ is permutation. Moreover, the coflabby resolution \eqref{eq:cdcf} induces an exact sequence
\begin{equation*}
    0\rightarrow \widetilde{E}_{m}\rightarrow \widetilde{R}_{m}\rightarrow I_{m}\rightarrow 0. 
\end{equation*}
Taking its dual, we obtain an exact sequence of $D_{2m}$-lattices
\begin{equation*}
    0\rightarrow J_{D_{m}/\cH_{m}}\rightarrow \widetilde{R}_{m}^{*} \rightarrow \widetilde{E}_{m}^{*} \rightarrow 0. 
\end{equation*}
Since $ \widetilde{R}_{m}^{*}$ and $\widetilde{E}_{m}^{*}$ are permutation, we obtain that $J_{D_{2m}/\cH}$ is quasi-permutation. Hence, the proof of Theorem \ref{thm:nzf2} is complete. \qed

\section{Proof of Theorem \ref{mth2}}\label{sect:pfdn}

\begin{lem}\label{lem:dnwl}
Let $m$ be a positive integer, and $\cH$ a multiset of subgroups of $D_{2m}$. Write $d=d_{D_{2m}}(\cH)$. Then there exists a strongly reduced set $\cH'$ of subgroups of $D_{2m}$ that satisfies the following: 
\begin{enumerate}
\item $\cH'\cap \cH_{d}\neq \emptyset$ and $\left(\cH'\cup \cH_{d}\right)^{\red}=\cH_{d}$, where
\begin{equation*}
\cH_{d}:=
\begin{cases}
\{\langle \sigma_{2m}^{d},\tau_{n}\rangle\}&\text{if }d\notin 2\Z;\\
\{\langle \sigma_{2m}^{d/2}\rangle,\langle \sigma_{2m}^{d},\tau_{2m}\rangle,\langle \sigma_{2m}^{d},\sigma_{2m}\tau_{2n}\rangle\}&\text{if }d \in 2\Z;
\end{cases}
\end{equation*}
\item $\#H/N^{G}(\cH')$ is a power of $2$ for any $H\in \cH'$; and
\item there is an isomorphism of $D_{2m}$-lattices
\begin{equation*}
J_{D_{2m}/\cH}\oplus R\cong J_{D_{2m}/\cH'}\oplus R'
\end{equation*}
for some permutation $D_{2m}$-lattices $R$ and $R'$. 
\end{enumerate}
\end{lem}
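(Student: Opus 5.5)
We will reduce $\cH$ step by step, using only the similarity-preserving operations already available: passing to $\cH^{\red}$ (Proposition \ref{prop:rdrd}), replacing elements by conjugates (Proposition \ref{prop:conj}), passing to a strongly reduced subset (Proposition \ref{prop:rdsr}), and merging subgroups through the Sylow-local replacement of Lemma \ref{lem:rdos} (i.e.\ \cite[Lemma 8.3]{Hasegawa}).

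\emph{Normalizing the subgroups.} Write $G=D_{2m}$ with normal cyclic subgroup $C=\langle\sigma_{2m}\rangle$. Every subgroup of $G$ is either cyclic $\langle\sigma_{2m}^{e}\rangle$ with $e\mid 2m$, or dihedral $\langle\sigma_{2m}^{e},\sigma_{2m}^{i}\tau_{2m}\rangle$. Up to conjugacy, the dihedral ones are $\langle\sigma_{2m}^{e},\tau_{2m}\rangle$ when $e$ is odd, and one of $\langle\sigma_{2m}^{e},\tau_{2m}\rangle$ or $\langle\sigma_{2m}^{e},\sigma_{2m}\tau_{2m}\rangle$ when $e$ is even. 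By Proposition \ref{prop:conj} we may therefore assume that every member of $\cH$ is of one of these standard forms. Their indices are $2e$ and $e$ respectively, so $d$ is the gcd of these numbers.

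\emph{Merging via Sylow data.} Decompose $C=C_{\mathrm{odd}}\times C_{2}$. Any odd prime $p$ has a cyclic normal Sylow subgroup $G_{p}\subset C_{\mathrm{odd}}$, on which $G$ acts by $\pm1$, i.e.\ by scalars. Hence $G=G_{p}\rtimes G'$ satisfies the hypotheses of Lemma \ref{lem:rdos}. Applying Lemma \ref{lem:rdos} successively for each odd prime $p\mid 2m$, we replace $\cH$ by a strongly reduced set $\cH_{0}$ with $[J_{G/\cH}]=[J_{G/\cH_{0}}]$. Each $p$-local component $(\cH_{0})_{G_{p}}^{\set}=\cH_{G_{p}}^{\red}$ is then the single subgroup of $G_{p}$ of index $p^{\ord_{p}d}$, by Corollary \ref{cor:hppo} applied to the cyclic group $G_{p}$. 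Consequently every $H\in\cH_{0}$ has odd part $H\cap C_{\mathrm{odd}}=\langle\sigma_{2m}^{d_{\mathrm{odd}}}\rangle\cap C_{\mathrm{odd}}$, where $d_{\mathrm{odd}}$ is the odd part of $d$. This common odd-part subgroup lies in $N^{G}(\cH_{0})$, which yields (ii): $H/N^{G}(\cH')$ only sees the $2$-part.

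\emph{Comparison with $\cH_{d}$.} This yields (i) and is the main obstacle. We must check that every member of $\cH'$ is contained in some member of $\cH_{d}$, and that at least one member is equal to a member of $\cH_{d}$.

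The containment follows from index considerations. When $d$ is odd, every index is divisible by $d$. A cyclic member has index $2e$ with $d\mid e$, and a dihedral member has index $e$ with $d\mid e$; either way it sits in $\langle\sigma_{2m}^{d},\tau\rangle$ after conjugation, which we have already fixed. When $d$ is even, cyclic members $\langle\sigma_{2m}^{e}\rangle$ satisfy $d\mid 2e$ and so lie in $\langle\sigma_{2m}^{d/2}\rangle$, while dihedral members have $d\mid e$ and lie in one of the two dihedral subgroups of $\cH_{d}$.

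For $\cH'\cap\cH_{d}\neq\emptyset$, we use the $2$-local statement of Corollary \ref{cor:hppo}: some member attains $\ord_{2}$ of its index equal to $\ord_{2}d$. Combined with the odd parts already being equal to those of $\cH_{d}$, that member coincides with an element of $\cH_{d}$. The delicate point is the possibility that the $2$-parts fail to be maximal. Here we would use Lemma \ref{lem:rded} (the weighted reduction) together with Proposition \ref{prop:tkfx} to absorb non-maximal members into permutation summands, adjusting $R$ and $R'$ in (iii) accordingly. We expect the case analysis of $\ord_{2}$ of the indices (cyclic versus the two dihedral classes) to be the laborious part.
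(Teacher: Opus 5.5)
Your proposal follows the paper's proof: Lemma \ref{lem:rdos} at every odd prime makes the odd parts $H\cap C_{\mathrm{odd}}$ coincide, which gives (ii) (you derive this directly, where the paper passes to the quotient by this common odd part); Proposition \ref{prop:conj} handles conjugation; and index bookkeeping against $\cH_d$ gives (i). The ``delicate point'' of your last paragraph is not real, so Lemma \ref{lem:rded} and Proposition \ref{prop:tkfx} are not needed: every member of $\cH'$ has index with odd part $d_{\mathrm{odd}}$, so a member of minimal $2$-adic valuation has index exactly $d$ and is conjugate to a member of $\cH_d$, as you had already observed; the only thing worth stating explicitly (the paper also leaves it implicit) is that the replacement does not change $d$, which holds e.g.\ because $\hat{H}^{-1}(G,J_{G/\cH})\cong\Z/d_{G}(\cH)$ is a similarity invariant.
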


\begin{proof}
Take an odd prime number $p$. Write $2m=p^{\nu_{p}}n^{(p)}$, where $\nu_{p}\in \Znn$ and $n^{(p)}\in \Z \setminus p\Z$. If we set $N_{p}:=\langle \sigma_{2n}^{n^{(p)}}\rangle$, then there is an isomorphism 
\begin{equation*}
D_{2m}=N_{p} \rtimes \langle \sigma_{2m}^{p^{\nu_{p}}},\tau_{2m} \rangle. 
\end{equation*}
Hence, by Lemma \ref{lem:rdos}, we may assume that $\cH$ is strongly reduced and $\cH_{N_{p}}^{\set}$ is reduced for any odd prime number $p$. This implies that all elements of $\cH$ contain $\sigma_{2m}^{d'}$, where $d'$ is the unique odd integer such that $d/d'$ is a power of $2$. Therefore, by replacing $D_{2m}$ with its quotient by $\langle \sigma_{2m}^{d'}\rangle$, we may further assume that $\#H$ is a power of $2$ for any $\cH$. In particular, there exists $H_{0}\in \cH$ such that $(G:H_{0})=d_{G}(\cH)$.

In the sequel of this proof, we denote by $n'$ the maximum odd divisor of $n$. Note that $n'$ divides $d_{D_{2m}}(\cH)$ and $d_{D_{2m}}(\cH)/n'$ is odd, since $\#H$ is a power of $2$ for any $H\in \cH$. Put $S:=\langle \sigma_{2m}^{n'},\tau_{2m}\rangle$, which is a $2$-Sylow subgroup of $D_{2m}$. Moreover, for any $H\in \cH$, there is $g\in G$ such that $gHg^{-1}$ is one of the forms $\langle \sigma_{2m}^{2^{\nu}n'}\rangle$, $\langle \sigma_{2m}^{2^{\nu}n'},\tau_{2m}\rangle$ or $\langle \sigma_{2m}^{2^{\nu}n'},\sigma_{2m}\tau_{2m}\rangle$ for some $\nu\in \Znn$. Note that $\langle \sigma_{2m}^{2^{\nu}n'}\rangle$, $\langle \sigma_{2m}^{2^{\nu}n'},\tau_{2m}\rangle$ and $\langle \sigma_{2m}^{2^{\nu}n'},\sigma_{2m}\tau_{2m}\rangle$ are subgroups of $S$ for any $\nu \in \Zpn$. In particular, by Proposition \ref{prop:conj}, we may assume that all elements of $\cH$ are contained in $S$. 

\textbf{Case 1.~$d$ is odd. }

Since $(G:H_{0})$ is odd and $H_{0}\subset S$, we obtain $H_{0}=S$. This implies $\cH=\{S\}$, which satisfies (i) as desired. 

\textbf{Case 2.~$d$ is even. }

In this case, all element of $\cH$ must be contained in a member of $\cH_{d_{D_{2n}}(\cH)}$. Furthermore, $H_{0}$ lies in $\cH_{d_{2n}(\cH)}$. Hence, we obtain the validity of (i). 
\end{proof}

Theorem \ref{mth2} follows from Proposition \ref{prop:coch} and Theorem \ref{thm:dihd} as follows. 

\begin{thm}\label{thm:dihd}
Let $n$ be a positive integer, and $\cH$ a multiset of subgroups of $D_{n}$. Write $d=d_{D_{n}}(\cH)$. 
\begin{enumerate}
\item If $d$ is odd, then the $D_{n}$-lattice $J_{D_{n}/\cH}$ is quasi-permutation. 
\item If $d$ is even, then the $D_{n}$-lattice $J_{D_{n}/\cH}$ is a quasi-permutation if and only if it is quasi-invertible. Furthermore, the above two conditions are valid if and only if there exists a set of subgroups $\cH'$ of $D_{n}$ such that
\begin{equation*}
[J_{D_{2n}/\cH}]=[J_{D_{2n}/\cH'}], 
\end{equation*}
where $d$ and $\cH'$ satisfy one of the following: 
\begin{itemize}
\item[(a)] $d\in 2\Z \setminus 4\Z$ and $\cH'=\{\langle \sigma_{n}^{d/2}\rangle\}$; 
\item[(b)] $d\in 2\Z \setminus 4\Z$ and $\cH'=\{\langle \sigma_{n}^{d},\sigma_{n}^{j}\tau_{n}\rangle\}$ for some $j\in \{0,1\}$; 
\item[(c)] $d\in 2\Z \setminus 4\Z$ and $\cH'=\{\langle \sigma_{n}^{d/2} \rangle,\langle \sigma_{n}^{d},\sigma_{n}^{j}\tau_{n} \rangle\}$ for some $j\in \{0,1\}$; or
\item[(d)] $\cH'=\{\langle \sigma_{n}^{d},\tau_{n} \rangle,\langle \sigma_{n}^{d},\sigma_{n}\tau_{n} \rangle\}$.  
\end{itemize}
\end{enumerate}
\end{thm}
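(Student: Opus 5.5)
We reduce to a canonical representative with Lemma \ref{lem:dnwl}, which is stated there for $D_{2m}$; the argument carries over verbatim to $D_{n}$ (when $n$ is odd there is nothing to do at the prime $2$ beyond the Sylow subgroup $\langle\tau_{n}\rangle$). After replacing $\cH$ by the strongly reduced set $\cH'$ of that lemma, we have $[J_{D_{n}/\cH}]=[J_{D_{n}/\cH'}]$. Every element of $\cH'$ is contained in a member of $\cH_{d}$, at least one element of $\cH'$ lies in $\cH_{d}$, and each $H/N^{G}(\cH')$ is a $2$-group. Hence it suffices to classify the strongly reduced sets $\cH'$ with this shape.

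For (i), $d$ is odd, so Case 1 of the lemma gives $\cH'=\{\langle\sigma_{n}^{d},\tau_{n}\rangle\}$. Then $G/N^{G}(\cH')\cong D_{d}$ with $d$ odd, and $\cH'$ is the image of a subgroup of order $2$. Since $D_{d}\cong C_{d}\rtimes C_{2}$ with the action factoring through $C_{2}$, Proposition \ref{prop:edqp} (or Proposition \ref{prop:edgp} in the trivial case $d=1$) shows that $J_{D_{n}/\cH'}$ is quasi-permutation. Alternatively, Theorem \ref{thm:emmn} applies after passing to the quotient, because all Sylow subgroups of $D_{d}$ are cyclic and $\mathscr{P}_{*}(D_{d})=\emptyset$.

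For (ii), $d$ is even and $\cH'$ is a nonempty strongly reduced subset of subgroups, up to conjugacy, of the three groups in $\cH_{d}$, containing one of them. Quasi-permutation implies quasi-invertible, so we show that non-quasi-invertibility holds outside (a)--(d) and quasi-permutation holds inside.

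\emph{Non-quasi-invertibility outside (a)--(d).} By Theorem \ref{thm:qips}, quasi-invertibility of $J_{D_{n}/\cH'}$ is equivalent to quasi-invertibility of $J_{S/\cH'^{\srd}_{S}}$ for the $2$-Sylow subgroup $S$, since the odd Sylow subgroups are cyclic. We compute $\cH'_{S}$ by Proposition \ref{prop:rtmc}; here $S$ is dihedral or cyclic, and its double cosets are easy to list. We then apply Proposition \ref{prop:tgqp}. That proposition allows only a single subgroup with cyclic quotient, or the dihedral pair $\{\langle\sigma^{2m}\tau\rangle,\langle\sigma^{2m'+1}\tau\rangle\}$.
\begin{itemize}
\item If $4\mid d$, the cyclic subgroup $\langle\sigma_{n}^{d/2}\rangle$ yields a non-cyclic quotient $S/N$ of order at least $8$ with the index pattern forbidden. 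So only the two reflection subgroups survive, giving (d).
\item If $d\equiv 2\bmod 4$, the admissible configurations in $S$ are exactly the singletons and the reflection pair, which pull back to (a), (b), (c) and (d).
\end{itemize}

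\emph{Quasi-permutation inside (a)--(d).} Case (a) is a norm one torus with cyclic-by-$C_{2}$ quotient; it follows from Proposition \ref{prop:edgp} or \ref{prop:edqp}, since $G/N^{G}\cong D_{d/2}$ with $d/2$ odd. Case (b) follows from Proposition \ref{prop:edqp} with quotient $D_{d}$, $d\equiv2\bmod4$, and $\#H=2$; here one checks that $D_{d}\cong C_{2}\times D_{d/2}$ falls under the allowed form. Case (d) is a pair of reflection subgroups; it follows from \cite[Theorem 9.2]{Hasegawa}, or from Proposition \ref{prop:tgqp} after the odd-part reduction of Lemma \ref{lem:rdos}. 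Case (c) reduces, after passing to the quotient by $N^{G}(\cH')=\langle\sigma_{n}^{d}\rangle$ and conjugating $j$ to $0$ via Proposition \ref{prop:conj}, to $D_{2m}$ with $m=d/2$ odd and $\cH_{m}=\{\langle\sigma_{2m}^{m}\rangle,\langle\tau_{2m}\rangle\}$. This is precisely Theorem \ref{thm:nzf2}.

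The main obstacle is the bookkeeping in the non-quasi-invertibility direction. There we must verify that the restriction $\cH'_{S}$, after strong reduction, is never of the allowed form in Proposition \ref{prop:tgqp} except in cases (a)--(d). This requires care with the conjugacy classes of reflections when $n$ is even versus odd, and with the subgroup $\langle\sigma^{d/2}\rangle$ when $4\mid d$.
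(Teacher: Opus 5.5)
Most of your plan is sound: the reduction via Lemma \ref{lem:dnwl}, part (i), and the ``only if'' direction of (ii). For that direction you restrict to the $2$-Sylow subgroup $S$ via Theorem \ref{thm:qips} and Proposition \ref{prop:rtmc}. The paper instead passes to $D_n/N$, where $N$ is the odd cyclic normal subgroup, using Proposition \ref{prop:tkfx}. Since $D_n=N\rtimes S$, the extra intersections $S\cap gHg^{-1}$ with $g\in N$ disappear under reduction, so both routes lead to essentially the same configuration in $S\cong D_n/N$.

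The genuine gap is case (b) of the ``if'' direction. Proposition \ref{prop:edqp} is Endo's theorem for groups all of whose Sylow subgroups are cyclic. For $d\equiv 2\bmod 4$ with $d\geq 6$, the relevant quotient $D_n/\langle\sigma_n^{d}\rangle\cong D_d$ has Sylow $2$-subgroup $\langle\sigma^{d/2},\tau\rangle\cong C_2\times C_2$, which is not cyclic. Likewise $D_d\cong C_2\times D_{d/2}$ is not of the form $C_a\times D_b$ with $a,b$ odd. So your claim that ``one checks that $D_d$ falls under the allowed form'' is false. Nothing you cite shows that $J_{D_d/\langle\tau\rangle}$ is quasi-permutation. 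Theorem \ref{thm:qips} would give only quasi-invertibility, because quasi-permutation is not detected on Sylow subgroups. The paper obtains (b) from an external result on norm one tori for dihedral extensions, \cite[Theorem 1.2 (ii)]{Hoshi2024}. You need that, or an explicit coflabby-resolution argument in the spirit of Section \ref{sect:cflb}.

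The same locality issue invalidates your fallback for (d). When $d$ is not a power of $2$, applying Proposition \ref{prop:tgqp} to the $2$-part after Lemma \ref{lem:rdos} yields only quasi-invertibility. You must therefore rely on a global statement such as \cite[Theorem 6.1]{Hasegawa}, which is what the paper cites.

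Finally, in (c) you cannot pass from $j=1$ to $j=0$ by Proposition \ref{prop:conj}, since $\tau_{2m}$ and $\sigma_{2m}\tau_{2m}$ are not conjugate in $D_{2m}$. Use instead the automorphism $\sigma\mapsto\sigma$, $\tau\mapsto\sigma\tau$, which preserves quasi-permutation lattices, as in the paper's closing remark.
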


\begin{proof}
(i): If $n$ is odd, the assertion follows from Theorem \ref{thm:emmn}. Otherwise, we may assume that $\cH$ satisfies (i) and (ii) in Lemma \ref{lem:dnwl}. Then we have $\cH=\{\langle \sigma_{n}^{d},\tau_{n}\rangle\}$, and hence the assertion follows from Theorem \ref{thm:emmn}. 

(ii): It suffices to prove the following under the assumption that $d_{D_{n}}(\cH)$ and $\cH$ satisfy (i), (ii) and (iii) in Lemma \ref{lem:dnwl}: 
\begin{itemize}
    \item[(I)] If (a), (b), (c) or (d) is valid, then the $D_{n}$-lattice $J_{D_{n}/\cH}$ is quasi-permutation. 
    \item[(I\hspace{-1pt}I)] If the $D_{n}$-lattice $J_{D_{n}/\cH}$ is quasi-invertible, then one of (a), (b), (c) or (d) is valid. 
\end{itemize}
We first prove (I). If (a) is valid, the assertion follows from Theorem \ref{thm:emmn}. If (b) holds, the assertion is a consequence of \cite[Theorem 1.2 (ii)]{Hoshi2024}. If (c) is satisfied, then Theorem \ref{thm:nzf2} gives the desired assertion. Finally, if (d) holds, then the assertion is the same as \cite[Theorem 6.1]{Hasegawa}. 

In the following, we give a proof of (I\hspace{-1pt}I). Write $n=2^{\nu}n'$, where $\nu \in \Zpn$ and $n'\in \Zpn \setminus 2\Z$. Put $S:=\langle \sigma_{n}^{n'},\tau_{n}\rangle$ and $N:=\langle \sigma_{n}^{n/n'}\rangle$. Then we have $D_{n}=N \rtimes S$. Moreover, all elements of $\cH$ are contained in $S$. 
On the other hand, there is an isomorphism of $D_{n}/N$-lattices
\begin{equation*}
J_{D_{n}/\cH}^{[N]}\cong J_{\overline{D}_{n}/\overline{\cH}}^{(\overline{\varphi}^{\nor})}.  
\end{equation*}
Here $\overline{\cH}:=\{HN/N\mid H\in \cH\}$ is a strongly reduced set, and $\overline{\varphi}^{\nor}(\overline{H})=1$ for all $\overline{H}\in \overline{\cH}$. This is a consequence of Proposition \ref{prop:tkfx}. In particular, we have $\overline{\varphi}^{\nor}(\overline{H})=1$ for all $\overline{H}\in \overline{\cH}$. Applying Proposition \ref{prop:tgqp}, we obtain that $\overline{\cH}$ satisfies one of (a) for $n=2$, (b) for $n=2$, (c) for $n=2$, or (d) where $n>1$ is a power of $2$. This implies the desired assertion, since $\cH$ satisfies (ii) in Lemma \ref{lem:dnwl}. 
\end{proof}

\begin{rem}
Assume $n$ is even and $d$ is a divisor of $n$ with $d\in 2\Z \setminus 4\Z$. Then the map $f\colon D_{n}\rightarrow D_{n}$ given by $f(\sigma_{n})=\sigma_{n}$ and $f(\tau_{n})=\sigma_{n}\tau_{n}$ is an automorphism. In particular, it induces isomorphisms of $D_{n}$-lattices
\begin{equation*}
    J_{D_{n}/\langle \sigma_{n}^{d},\tau_{n}\rangle}\cong J_{D_{n}/\langle \sigma_{n}^{d},\sigma_{n}\tau_{n}\rangle},\quad 
    J_{D_{2n}/\{\langle \sigma_{n}^{d/2} \rangle,\langle \sigma_{n}^{d},\tau_{n} \rangle\}}\cong 
    J_{D_{2n}/\{\langle \sigma_{n}^{d/2} \rangle,\langle \sigma_{n}^{d},\sigma_{n}\tau_{n} \rangle\}}. 
\end{equation*}
Therefore, (b) (resp.~(c)) for $j=1$ is reduced to that for $j=0$. 
\end{rem}

\end{document}